\newtheorem{Theorem}{Theorem}
\newtheorem{Corollary}[Theorem]{Corollary}
\newtheorem{Proposition}[Theorem]{Proposition}
\newtheorem{Lemma}[Theorem]{Lemma}
\newtheorem{Definition}[Theorem]{Definition}
\newtheorem{Remark}[Theorem]{Remark}
\newtheorem{Property}[Theorem]{Property}
\newtheorem{Example}[Theorem]{Example}
\begin{document}

\title{$G$-strong subdifferentiability and applications to norm attaining subspaces}

\author[J. Falc\'{o}]{Javier Falc\'{o}}
\address[Javier Falc\'{o}]{Departamento de An\'{a}lisis Matem\'{a}tico,
	Universidad de Valencia, Doctor Moliner 50, 46100 Burjasot (Valencia), Spain} \email{francisco.j.falco@uv.es}
\author[D. Isert]{Daniel Isert}
\address[Daniel Isert]{Departamento de An\'{a}lisis Matem\'{a}tico,
	Universidad de Valencia, Doctor Moliner 50, 46100 Burjasot (Valencia), Spain}
\email{daniel.isert@uv.es}

\thanks{The first author was supported by grant PID2021-122126NB-C33 funded by MCIN/AEI/10.13039/501100011033 and by “ERDF A way of making Europe”.}


\keywords{}
\subjclass[2020]{}

\begin{abstract}
We study the reflexivity and strong subdifferentiability within the framework of group invariant mappings. We show that a Banach space is $G$-reflexive if the norm of its dual is $G$-strong subdifferentiable. To do this, we extend numerous classical concepts in functional analysis such as weak and weak-star topologies, the polar of a set, duality mapping, to the framework of group invariant mappings. We also extend many classical results in functional analysis including Banach-Alaoglu-Bourbaki's theorem, James' theorem, Moreau's maximum formula, and Krein-Smulian's theorem, to this context. To conclude, we provide an application of these new results by providing sufficient conditions to ensure the existence of closed Banach spaces inside the set of norm-attaining functionals of a Banach space.
\end{abstract}

\maketitle


\section{Introduction}

In this paper, we develop the theory of group-invariant mappings—mappings that remain unchanged under the action of a topological group—within the broader context of infinite-dimensional analysis. Our primary focus is to explore the geometric properties of Banach spaces and examine the extent to which key results in their geometry hold when the involved mappings are group-invariant.

 Let us start by introducing the notation and key definitions that will be used throughout the paper. Let $X$ and $Y$ denote Banach spaces over $\mathbb{K}$, the field of real or complex numbers. We denote by $X^*$, $B_X$, and $S_X$  the dual space, the unit ball, and the unit sphere of the Banach space $X$, respectively. We reserve $G$ to denote a topological group of invertible bounded linear mappings, equipped with the relative topology from $\mathcal{L}(X)$, and $g$ for elements of the group $G$. We define  the corresponding dual and bidual group as 
\[
G^{*} = \left\{g^{*} \in \mathcal{L}(X^{*}) ~ | ~ g \in G\right\} \hbox{ and } G^{**} = \left\{g^{**} \in \mathcal{L}(X^{**}) ~ | ~ g \in G\right\}.
\]
The concept of $G$-invariance is central to our analysis. There are three key notions of $G$-invariance, defined as follows:
\begin{enumerate}
\item A point $x \in X$ is $G$-invariant, or invariant under the action of $G$ if $g(x) = x$ for every $g \in G$.

\item A set $K \subset X$ is $G$-invariant if for every $g \in G$, $g(K) = K$.

\item A mapping $f \colon X \to Y$ is $G$-invariant if $f(g(x)) = f(x)$ for every $x \in X$ and every $g \in G$
\end{enumerate}

Throughout the paper, we assume that the elements of $G$ are isometric isomorphisms on $X$, meaning the norm of the Banach space $(X, \Vert \cdot \Vert)$ is invariant under the action of $G$. Under this assumption, we will denote by $X_G$ the set of $G$-invariant points of $X$, $X_G^*$ will be the set of $G$-invariant linear and continuous functionals on $X$.

Lastly, we define the concept of a $G$-symmetrization point. Given a Banach space $X$ and a compact topological group $G$ acting on $X$, the $G$-symmetrization point of $x \in X$ is defined as
\[
\overline{x} = \int_{G}g(x)d\mu(g),
\] 
where $\mu$ is the Haar measure and the integral is the Bochner integral.

Finally, recall that a function $f \colon X \to \mathbb{R}$ is convex with respect to the group $G$, or simply $G$-convex, given that
\[
f(\overline{x}) \leq \int_{G}f(g(x))d\mu(g) \quad \forall x \in X.
\]
And a function $f \colon X \to \mathbb{R}$ is linear with respect to the group $G$, or simply $G$-linear, if
\[
f(\overline{x}) = \int_{G}f(g(x))d\mu(g) \quad \forall x \in X.
\]
For more on these definitions see \cite{FaIs}.

The structure of the paper is as follows. In Section \ref{Seccio 2}, we provide the necessary background definitions for understanding Sections \ref{Seccio 3} and \ref{Seccio 4}. This section also includes a $G$-invariant version of the Banach-Alaoglu-Bourbaki theorem and Kakutani's theorem, as well as an in-depth study of the relationships between the spaces $X$, $X^*$, and $X^{**}$, and their $G$-invariant counterparts, $X_{G}$, $X^{*}_G$, and $X^{**}_{G^*}$. Some of these results are presented without proof since they can easily be obtained from the new definitions. However, we include them for the sake of completeness. The section concludes with the derivation of a commutative diagram.

In Section \ref{Seccio 3}, building on the previous work, we focus on several fundamental theorems in functional analysis, including James' theorem, Moreau's maximum formula, and Krein-Smulian's theorem. Finally, Section \ref{Seccio 4} offers a study of $G$-dissipative operators and culminates in the paper's main result: a Banach space $X$ is $G$-reflexive provided $X^*$ is $G$-strongly subdifferentiable. 

 To conclude section \ref{Seccio 4}, as an application of the framework developed here, we address a problem posed by Gilles Godefroy in 2001, concerning the lineability of the set of norm-attaining functionals on Banach spaces. As usual, we denote by \( \text{NA}(X) \) the set of functionals in \( X^* \) that attain their norm on \( X \), meaning there exists some \( x \in X \) with \( \|x\| = 1 \) such that \( \|f\| = f(x) \). The problem posed by Godefroy asks whether, for every infinite-dimensional Banach space \( X \), the set \( \text{NA}(X) \) contains a two-dimensional linear subspace.

 Martin Rmoutil provided a negative solution to this problem by giving an example of space whose set of norm-attaining is “extremely non-lineable” in the sense that the set \( \text{NA}(\mathbb{R}) \) does not contain any two-dimensional subspace. We recommend \cite{KadetsLopezMartin, Martin,Rmoutil} for more details in this topic.

To continue with the study of structural properties of $NA(X)$ we will provide in Section \ref{Seccio 4} sufficient conditions to ensure that the set $NA(X)$ contains a vectorial space. Furthermore, we will also highlight the relation between the existence of group invariant functionals that attain their norm and the existence of vectorial spaces of norm attaining functionals.

\section{Preliminar definitions and properties}\label{Seccio 2}

We start by presenting the definition of the $G$-weak and $G$-weak-star topologies.
\begin{Definition}
    Let $X$ be a normed space and $G$ a compact topological group acting on $X$, the weak group invariant topology on $X$ is generated by a basis consisting of sets of the form
    \[
    \left\{x \in X ~~ | ~~ \langle f_{i}, x - x_{0} \rangle < \epsilon, \, \hbox{ for } 1 \leq i \leq n\right\},
    \]
    for all choices of $x_{0} \in X$, $f_{1}, \dots, f_{n} \in X_{G}^{*}$ and $\epsilon > 0$. We denote this topology by $w_{G}$ or $\sigma_{G}(X, X^{*})$.
\end{Definition}

\begin{Definition}
    Let $X$ be a normed space and $G$ a compact topological group acting on $X$, the weak-star group invariant topology on $X^{*}$ is generated by a basis consisting of sets of the form
    \[
    \left\{f \in X^{*} ~~ | ~~ \langle f - f_{0}, x_{i} \rangle < \epsilon, \, \hbox{ for } 1 \leq i \leq n\right\},
    \]
    for all choices of $f_{0} \in X^{*}$, $x_{1}, \dots, x_{n} \in X_{G}$ and $\epsilon > 0$. We denote this topology by $w^{*}_{G}$ or $\sigma_{G}(X^{*}, X)$.
\end{Definition}

Notice the following relation between the $w$ topology and the $w_{G}$ topology.
\begin{Proposition}
\label{weaker-topologies}
  Let $X$ be a normed space and $G$ a compact topological group acting on $X$. If $X^{*}_{G} \subsetneq X^{*}$, then the weak group invariant topology on $X$, $w_{G}$, is strictly weaker than the weak topology of $X$, $w$.
\end{Proposition}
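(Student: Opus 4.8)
The plan is to recognize $w_{G}$ and $w$ as the weak topologies $\sigma_{G}(X,X^{*})$ and $\sigma(X,X^{*})$ induced on $X$ by the families $X_{G}^{*}$ and $X^{*}$ respectively, and then to argue that the strict inclusion $X_{G}^{*}\subsetneq X^{*}$ forces a strict inclusion of the corresponding topologies. First I would record that $X_{G}^{*}$ is a linear subspace of $X^{*}$: if $f,h\in X_{G}^{*}$ and $\lambda\in\mathbb{K}$, then linearity of each $g\in G$ gives $(f+\lambda h)(g(x))=f(g(x))+\lambda h(g(x))=f(x)+\lambda h(x)$ for all $x\in X$, so $f+\lambda h\in X_{G}^{*}$. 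This observation is what makes the standard theory of dual pairs applicable.

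The inclusion "$w_{G}$ is weaker than $w$" is immediate: every basic $w_{G}$-neighborhood is built from functionals $f_{1},\dots,f_{n}\in X_{G}^{*}\subseteq X^{*}$, hence is already a basic $w$-neighborhood, so every $w_{G}$-open set is $w$-open. The real content is strictness. Using the hypothesis $X_{G}^{*}\subsetneq X^{*}$, I would fix a functional $f\in X^{*}\setminus X_{G}^{*}$ and exhibit a $w$-open set that is not $w_{G}$-open, namely the $w$-neighborhood of the origin $U=\left\{x\in X : |\langle f,x\rangle|<1\right\}$.

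To see that $U$ is not $w_{G}$-open I would argue by contradiction. If it were, there would exist a basic $w_{G}$-neighborhood of $0$, say $V=\left\{x\in X : |\langle f_{i},x\rangle|<\epsilon,\ 1\le i\le n\right\}$ with $f_{1},\dots,f_{n}\in X_{G}^{*}$, satisfying $V\subseteq U$. Then the subspace $N=\bigcap_{i=1}^{n}\ker f_{i}$ is contained in $V$, hence in $U$; since $N$ is a linear subspace and $U$ is bounded along every line through the origin, $f$ must vanish on $N$, giving $\bigcap_{i=1}^{n}\ker f_{i}\subseteq\ker f$. The standard linear-algebra lemma on linear functionals then yields $f\in\mathrm{span}\{f_{1},\dots,f_{n}\}$, and because $X_{G}^{*}$ is a subspace this forces $f\in X_{G}^{*}$, contradicting the choice of $f$. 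Hence $U$ is $w$-open but not $w_{G}$-open, so $w_{G}$ is strictly weaker than $w$.

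I expect the only delicate point to be the passage from $N\subseteq U$ to "$f$ vanishes on $N$" together with the ensuing application of the kernel-inclusion lemma; both become routine once one notes that any linear subspace contained in the set $\{|\langle f,\cdot\rangle|<1\}$ must be annihilated by $f$, for otherwise scaling a vector on which $f$ is nonzero would leave the subspace while leaving $U$. Everything else reduces to a direct comparison of the two generating families $X_{G}^{*}$ and $X^{*}$.
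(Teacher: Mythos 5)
Your proof is correct, but it follows a genuinely different route from the paper's. You run the classical dual-pair argument: $w_{G}$ and $w$ are the topologies induced by the subspaces $X_{G}^{*}$ and $X^{*}$ of linear functionals, and if the $w$-neighborhood $U=\{x : |\langle f,x\rangle|<1\}$ of the origin were $w_{G}$-open, a basic $w_{G}$-neighborhood $V$ of $0$ inside $U$ would give $\bigcap_{i}\ker f_{i}\subseteq\ker f$, whence $f\in\mathrm{span}\{f_{1},\dots,f_{n}\}\subseteq X_{G}^{*}$ by the kernel-inclusion lemma (which is in fact the paper's Lemma \ref{Lema 3.9 llibre dels 6}, used there only later for the Hahn-Banach argument) together with the observation that $X_{G}^{*}$ is a linear subspace. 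The paper instead exploits the group action directly: from $f\notin X_{G}^{*}$ it extracts $x_{0}$ and $g$ with $\langle f,x_{0}\rangle\neq\langle f,g(x_{0})\rangle$, and observes that every basic $w_{G}$-open set containing $x_{0}$ automatically contains $g(x_{0})$ because its defining functionals are $G$-invariant, so a suitable $f$-defined half-space separating $x_{0}$ from $g(x_{0})$ cannot be $w_{G}$-open. Your approach is the more standard and arguably cleaner one, and it generalizes verbatim to any comparison $\sigma(X,M)$ versus $\sigma(X,N)$ for subspaces $M\subsetneq N$ of $X^{*}$; the paper's argument is more hands-on and makes the group-theoretic obstruction explicit, foreshadowing the subsequent Property that $w_{G}$ fails to be Hausdorff precisely because it cannot separate $x$ from $g(x)$. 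Both are sound; the only point worth making explicit in yours is the scaling step showing that a linear subspace contained in $\{|\langle f,\cdot\rangle|<1\}$ lies in $\ker f$, which you do address.
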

\begin{proof}
    If $X^{*}_{G} \subsetneq X^{*}$, then, there exists $f \in X^{*} \backslash X_{G}^{*}$, $x_{0}\in X$ and $g \in G$ such that $\langle f, x_{0}\rangle \neq \langle f, g(x_{0}) \rangle$. Let $\epsilon < |\langle f, g(x_{0}) \rangle - \langle f, x_{0} \rangle|$ and consider
    \[
    U = \left\{x \in X ~~ | ~~ \langle f, x\rangle - \langle f, x_{0} \rangle < \epsilon\right\},
    \]
    we claim that $U \in w \backslash w_{G}$. By definition $U \in w$. We proceed by contradiction to show that $U \notin w_{G}$. If $U \in w_{G}$, then $U = \cup_{\alpha}U_{\alpha}$ where
    \[
    U_{\alpha} = \left\{x \in X ~~ | ~~ \langle f_{i, \alpha}, x - y_{\alpha} \rangle < \epsilon_{\alpha}, \hbox{ for } 1 \leq i \leq n\right\},
    \]
    for all choices of $y_{\alpha} \in X$ and $f_{i, \alpha} \in X^{*}_{G}$. But observe now that, $x_{0} \in U$, and since $U \in w_{G}$, $g(x_{0}) \in U$. However $|\langle f, g(x_{0}) \rangle - \langle f, x_{0}\rangle| > \epsilon$, a contradiction.
\end{proof}

We first observe this result which is quite different to what happens with the topologies in the non group invariant version.
\begin{Property}
    Let $X$ be a Banach space and $G$ a compact topological group acting on $X$. If $X^{*}_{G} \subsetneq X^{*}$ then the topologies $w_{G}$ and $w^{*}_{G}$ are not Hausdorff.
\end{Property}
\begin{proof}
    We are only going to show the result for the $w_{G}$ topology. The $w^{*}_{G}$ case follows analogously. Pick $x \in X$ which is not $G$-invariant. Then, $x \neq g(x)$ for some $g \in G$. By definition, any open set containing $x$ contains an element
    \[
    U = \left\{y \in X ~~ | ~~ \langle f_{i}, y - x \rangle < \epsilon, \, \hbox{ for } 1 \leq i \leq n\right\}
    \]
    for some $f_{1}, \dots, f_{n} \in X^{*}_{G}$ and $\epsilon>0$. Now, any open set containing $g(x)$, again, by definition, contains a set 
    \[
    V = \left\{y \in X ~~ | ~~ \langle h_{i}, y - g(x) \rangle < \epsilon', \, \hbox{ for } 1 \leq i \leq m\right\}
    \]  for some $h_{1}, \dots, h_{m} \in X^{*}_{G}$ and $\epsilon'>0$.
    Observe that
    $x\in U \cap V\ne\emptyset$, by the $G$-invariance of $h_1,\ldots, h_m$. Thus, we cannot find two disjoint neighborhoods of $x$ and $g(x)$.
\end{proof}

\begin{Proposition}
    Let $X$ be a normed space and $G$ a compact topological group acting on $X$.
    \begin{enumerate}[(i)]
        \item Let $\left\{f_{n}\right\}_{n} \subseteq X_{G}^{*}$, $f \in X_{G}^{*}$. Then $f_{n} \xrightarrow{w_{G}^{*}} f$  if, and only if, $\lim_{n \to +\infty}\langle f_{n}, x \rangle = \langle f, x \rangle$ for every $x \in X$.

        \item Let $\left\{x_{n}\right\}_{n} \subseteq X$, $x \in X$. Then $x_{n} \xrightarrow{w_G} g(x)$  for every $g \in G$ if, and only if, $\lim_{n \to +\infty}\langle f, x_{n} \rangle = \langle f, x \rangle$ for every $f \in X_G^{*}$.
    \end{enumerate}
\end{Proposition}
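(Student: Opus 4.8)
The plan is to reduce both equivalences to the definition of convergence in the respective $G$-invariant topologies, combined with the $G$-invariance of the functionals in play. By the definition of the basic neighborhoods, $f_n \xrightarrow{w_G^*} f$ means exactly that $\lim_n \langle f_n, x\rangle = \langle f, x\rangle$ for every $x \in X_G$, since the test vectors of the $w_G^*$ topology lie in $X_G$ (and, as $X_G$ is a linear subspace, the one-sided conditions defining the basis give two-sided control, i.e.\ the topology induced by the seminorms $x \mapsto |\langle \cdot, x\rangle|$, $x \in X_G$). Likewise $x_n \xrightarrow{w_G} y$ means $\lim_n \langle f, x_n\rangle = \langle f, y\rangle$ for every $f \in X_G^*$. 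Thus in each part one direction is immediate by restriction, and the real work is to upgrade the pairing from the $G$-invariant test family to all of $X$ in part (i), and to absorb the action of $G$ on the limit in part (ii).

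For part (i), the reverse implication is trivial: convergence against every $x \in X$ in particular holds for every $x \in X_G$, which is precisely $w_G^*$ convergence. For the forward implication I would invoke the $G$-symmetrization point. Fix an arbitrary $x \in X$ and set $\overline{x} = \int_G g(x)\, d\mu(g)$. The two facts I would use are that $\overline{x} \in X_G$ and that every $G$-invariant functional $h$ satisfies $\langle h, x\rangle = \langle h, \overline{x}\rangle$. The latter follows by pulling the bounded linear functional through the Bochner integral: $\langle h, \overline{x}\rangle = \int_G \langle h, g(x)\rangle\, d\mu(g) = \int_G \langle h, x\rangle\, d\mu(g) = \langle h, x\rangle$, using the $G$-invariance of $h$ and that $\mu$ is a probability measure. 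Applying this to the $G$-invariant functionals $f_n$ and $f$, and using $\overline{x} \in X_G$ together with the hypothesis, I obtain $\lim_n \langle f_n, x\rangle = \lim_n \langle f_n, \overline{x}\rangle = \langle f, \overline{x}\rangle = \langle f, x\rangle$, as required.

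For part (ii), both directions follow from the $G$-invariance of the test functionals. For the forward implication, since $G$ is a group the identity belongs to $G$, so taking $g = \mathrm{id}$ in the hypothesis yields $x_n \xrightarrow{w_G} x$, which is exactly $\lim_n \langle f, x_n\rangle = \langle f, x\rangle$ for all $f \in X_G^*$. For the reverse implication, fix $g \in G$ and $f \in X_G^*$; since $f$ is $G$-invariant, $\langle f, g(x)\rangle = \langle f, x\rangle$, so $\lim_n \langle f, x_n\rangle = \langle f, x\rangle = \langle f, g(x)\rangle$, giving $x_n \xrightarrow{w_G} g(x)$ for every $g \in G$.

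I expect the only genuinely non-formal step to be the symmetrization identity in part (i): everything else is unwinding definitions and invoking $G$-invariance, whereas the passage from test vectors in $X_G$ to an arbitrary $x \in X$ is exactly what forces the use of $\overline{x}$ and the interchange of the functional with the Bochner integral, which is legitimate because each $f_n, f$ is bounded and linear. It is also worth noting that the non-Hausdorff nature of $w_G$ recorded in the preceding \emph{Property} is precisely why, in part (ii), the limit is an entire $G$-orbit rather than a single point, and the statement is phrased to reflect this.
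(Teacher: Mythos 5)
Your proof is correct, and it follows exactly the route the paper intends: the paper states this Proposition without proof (listing it among the results that ``can easily be obtained from the new definitions''), and the key identity you use, $\langle f, x\rangle = \langle f, \overline{x}\rangle$ for $f \in X_G^*$ via the Bochner integral, is precisely the fact the authors invoke to justify the Property immediately following. Your additional observations --- that the one-sided basic neighborhoods give two-sided control because $X_G$ and $X_G^*$ are linear subspaces, and that the $G$-orbit limit in (ii) reflects the non-Hausdorff nature of $w_G$ --- are accurate and fill in details the paper leaves implicit.
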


\begin{Property}
    Let $X$ be a Banach space and $G$ be a compact topological group acting on $X$. Choose a sequence $\left\{f_{n}\right\}_{n \in \mathbb{N}} \subseteq X^{*}_{G}$ and a function $f \in X^{*}_{G}$. If $\langle f_{n}, x \rangle \to \langle f, x \rangle$ for every $x \in X_{G}$, then $\langle f_{n}, x \rangle \to \langle f, x \rangle$ for all $x \in X$. Obviously the converse is also true.
\end{Property}

The proof of this result follows directly from the fact that $<f,x> =<f, \overline{x}>$ for every $x\in X$ and $f\in X_G^*$.

Our next result is the group invariant version of the classical Banach-Alaoglu-Bourbaki. The proof of this result follows from the Banach-Alaoglu-Bourbaki theorem and Proposition \ref{weaker-topologies}.
\begin{Theorem}[Group invariant Banach-Alaoglu-Bourbaki's theorem]\label{Banach-Alaoglu-Bourbaki}
    Let $X$ be a Banach space and $G$ a compact topological group acting on $X$. Then $B_{X^{*}} $ is $w_{G}^{*}$-compact.
\end{Theorem}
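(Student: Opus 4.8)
The plan is to deduce the statement directly from the classical Banach--Alaoglu--Bourbaki theorem together with a comparison of the two topologies, exactly as the mechanism behind Proposition~\ref{weaker-topologies} suggests. The classical theorem guarantees that $B_{X^*}$ is compact in the usual weak-star topology $w^* = \sigma(X^*, X)$. Since compactness is inherited by any coarser topology, it will suffice to show that $w_G^* = \sigma_G(X^*, X)$ is coarser than $w^*$, i.e.\ that $w_G^* \subseteq w^*$, and then transfer compactness along this inclusion.

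To establish $w_G^* \subseteq w^*$, I would argue at the level of the generating bases. A basic $w_G^*$-open set has the form $\{f \in X^* : \langle f - f_0, x_i \rangle < \epsilon \text{ for } 1 \le i \le n\}$ with $x_1, \dots, x_n \in X_G$, whereas a basic $w^*$-open set is of the same form but with $x_1, \dots, x_n$ ranging over all of $X$. Because $X_G \subseteq X$, every generator of $w_G^*$ is also a generator of $w^*$; hence every $w_G^*$-open set is $w^*$-open, which is precisely the claimed inclusion. This is the $X^*$-side analogue of the comparison carried out in Proposition~\ref{weaker-topologies}.

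The compactness transfer is then the standard topological fact: if $K$ is compact for a topology $\tau$ and $\tau' \subseteq \tau$, then $K$ is compact for $\tau'$, since any $\tau'$-open cover of $K$ is a fortiori a $\tau$-open cover and therefore admits a finite subcover. Applying this with $K = B_{X^*}$, $\tau = w^*$ and $\tau' = w_G^*$ delivers the conclusion, with essentially no further computation.

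The theorem is thus genuinely elementary once the topology comparison is in place, so there is no substantial obstacle; the one point deserving care — and the nearest thing to a pitfall — is that this argument yields compactness but \emph{not} Hausdorffness. Indeed, as already observed, $w_G^*$ fails to be Hausdorff whenever $X_G^* \subsetneq X^*$, so one must avoid any uniqueness-of-limits or sequential/net characterization of compactness that tacitly presupposes a separation axiom. The bare open-cover definition of compactness is insensitive to the absence of the Hausdorff property, and hence the argument goes through unchanged.
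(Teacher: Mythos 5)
Your proposal is correct and is exactly the argument the paper intends: the paper dispatches this theorem in one line by citing the classical Banach--Alaoglu--Bourbaki theorem together with the observation (Proposition~\ref{weaker-topologies}) that the group-invariant topology is coarser, so compactness transfers. Your additional remark that the open-cover notion of compactness requires no Hausdorff hypothesis is a worthwhile clarification, but the route is the same.
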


To continue, let us introduce the notion of $G$-reflexive space.
\begin{Definition}\label{Injeccio canonica G-inv}
    Let $X$ be a Banach space and $G$ a compact topological group acting on $X$. We say that $X$ is $G$-reflexive if the canonical injection $\pi \colon X \to X^{**}$ is $G$-surjective, i.e., $\pi(X_{G}) = X_{G^{**}}^{**}$.
\end{Definition}

Before we continue, let us recall the following lemma, it can be found in \cite[Lemma 3.3]{Brezis}. This result will be needed in the proof of the Goldstine's theorem.
\begin{Lemma}[Helly]\label{Helly's lemma}
    Let $X$ be a Banach space. Let $f_{1}, \dots, f_{k}$ be given in $X^{*}$, and let $\gamma_{1}, \dots, \gamma_{k}$ be given in $\mathbb{R}$.The following properties are equivalent:
    \begin{enumerate}
        \item $\forall \epsilon > 0$, $\exists x_{\epsilon} \in B_{X}$ such that
        \[
        |\langle f_{i}, x_{\epsilon} \rangle - \gamma_{i}| < \epsilon \quad \forall \, 1 \leq i \leq k.
        \]

        \item $\left|\sum_{i=1}^{k}\beta_{i}\gamma_{i}\right| \leq \left\Vert \sum_{i=1}^{k}\beta_{i}f_{i} \right\Vert$ for all $\beta_{1}, \dots, \beta_{k} \in \mathbb{R}$.
    \end{enumerate}
\end{Lemma}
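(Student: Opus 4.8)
The plan is to prove the two implications separately, treating $(1)\Rightarrow(2)$ as a routine estimate and $(2)\Rightarrow(1)$ via a finite-dimensional separation argument. Throughout it is convenient to introduce the linear map $\varphi\colon X\to\mathbb{R}^{k}$ defined by $\varphi(x)=(\langle f_{1},x\rangle,\dots,\langle f_{k},x\rangle)$ and to set $\gamma=(\gamma_{1},\dots,\gamma_{k})$. With this notation, property $(1)$ is exactly the assertion that $\gamma$ lies in the closure $\overline{\varphi(B_{X})}$ of the image of the unit ball, since $(1)$ says precisely that $\gamma$ can be approximated arbitrarily well, coordinatewise, by points of $\varphi(B_{X})$.

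For $(1)\Rightarrow(2)$, I would fix scalars $\beta_{1},\dots,\beta_{k}$ and $\epsilon>0$, pick $x_{\epsilon}\in B_{X}$ as in $(1)$, and estimate
\[
\Bigl|\sum_{i=1}^{k}\beta_{i}\gamma_{i}\Bigr|\le\Bigl|\Bigl\langle\sum_{i=1}^{k}\beta_{i}f_{i},\,x_{\epsilon}\Bigr\rangle\Bigr|+\sum_{i=1}^{k}|\beta_{i}|\,\bigl|\langle f_{i},x_{\epsilon}\rangle-\gamma_{i}\bigr|\le\Bigl\|\sum_{i=1}^{k}\beta_{i}f_{i}\Bigr\|+\epsilon\sum_{i=1}^{k}|\beta_{i}|,
\]
using $\|x_{\epsilon}\|\le1$ to bound the first term by the norm of $\sum_{i}\beta_{i}f_{i}$. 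Letting $\epsilon\to0$ yields $(2)$.

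For the converse $(2)\Rightarrow(1)$, I would argue by contraposition. Since $B_{X}$ is convex and $\varphi$ is linear, $\varphi(B_{X})$ is a convex subset of $\mathbb{R}^{k}$, and hence so is its closure $C=\overline{\varphi(B_{X})}$. If $(1)$ fails, then $\gamma\notin C$, so the compact set $\{\gamma\}$ and the closed convex set $C$ are disjoint; by the finite-dimensional Hahn–Banach separation theorem there exist a vector $\beta=(\beta_{1},\dots,\beta_{k})\in\mathbb{R}^{k}$ and $\alpha\in\mathbb{R}$ with $\langle\beta,z\rangle\le\alpha<\langle\beta,\gamma\rangle$ for all $z\in C$. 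Taking $z=\varphi(x)$ and the supremum over $x\in B_{X}$ gives $\sup_{x\in B_{X}}\langle\sum_{i}\beta_{i}f_{i},x\rangle=\|\sum_{i}\beta_{i}f_{i}\|\le\alpha$, whence $\sum_{i}\beta_{i}\gamma_{i}>\alpha\ge\|\sum_{i}\beta_{i}f_{i}\|$, which contradicts $(2)$.

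The routine direction is $(1)\Rightarrow(2)$; the genuine content lies in $(2)\Rightarrow(1)$, and the step I expect to require the most care is the reduction of property $(1)$ to the single geometric statement $\gamma\in\overline{\varphi(B_{X})}$ together with the correct application of strict separation in $\mathbb{R}^{k}$. In particular, one must verify that the separating functional $\beta$ recovers exactly the norm $\|\sum_{i}\beta_{i}f_{i}\|$ as a supremum over the unit ball, which is the step that couples the finite-dimensional geometry back to condition $(2)$.
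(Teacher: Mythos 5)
Your proof is correct and is exactly the classical argument: the paper itself does not prove this lemma but simply cites it from Brezis (Lemma 3.3), and your two-step argument --- the routine estimate for $(1)\Rightarrow(2)$ and the finite-dimensional strict separation of $\gamma$ from the closed convex set $\overline{\varphi(B_{X})}$ for $(2)\Rightarrow(1)$, using the symmetry of $B_{X}$ to identify $\sup_{x\in B_{X}}\langle\sum_{i}\beta_{i}f_{i},x\rangle$ with $\Vert\sum_{i}\beta_{i}f_{i}\Vert$ --- is precisely the proof given in that reference.
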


\begin{Theorem}[Group invariant Goldstine's theorem]\label{Goldstine}
    Let $X$ be a Banach space and $G$ a compact topological group acting on $X$. Then $\overline{B_{X}}^{w_{G}^{*}} = B_{X^{**}_{G^{**}}}$.
\end{Theorem}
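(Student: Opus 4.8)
The plan is to adapt the classical proof of Goldstine's theorem, identifying $X$ with $\pi(X)\subseteq X^{**}$ and establishing the two inclusions separately. Throughout I will lean on two facts recorded earlier: that $\langle f,x\rangle=\langle f,\overline{x}\rangle$ for every $x\in X$ and every $f\in X_G^*$, and that $G$-symmetrization commutes with the canonical injection, so that $\overline{\pi(x)}=\pi(\overline{x})\in X^{**}_{G^{**}}$ while $\|\overline{x}\|\le\|x\|$. The decisive structural feature is that the $w_G^*$ topology on $X^{**}$ is $\sigma_G(X^{**},X^*)$, so its basic neighbourhoods are tested only against the $G$-invariant functionals $f_1,\dots,f_k\in X_G^*$; this drives both inclusions.

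For $B_{X^{**}_{G^{**}}}\subseteq\overline{B_X}^{w_G^*}$ I would argue exactly as in the classical case via Helly's Lemma~\ref{Helly's lemma}. Fix $\Phi\in B_{X^{**}_{G^{**}}}$ and a basic $w_G^*$-neighbourhood determined by $f_1,\dots,f_k\in X_G^*$ and $\epsilon>0$; set $\gamma_i=\langle\Phi,f_i\rangle$. For all scalars $\beta_1,\dots,\beta_k$ one has $\left|\sum_i\beta_i\gamma_i\right|=\left|\langle\Phi,\sum_i\beta_if_i\rangle\right|\le\|\Phi\|\,\left\|\sum_i\beta_if_i\right\|\le\left\|\sum_i\beta_if_i\right\|$, so condition (2) of Helly's Lemma holds and produces $x_\epsilon\in B_X$ with $|\langle f_i,x_\epsilon\rangle-\gamma_i|<\epsilon$. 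Since each $f_i$ is $G$-invariant, $\langle f_i,x_\epsilon\rangle=\langle f_i,\overline{x_\epsilon}\rangle$ and $\overline{x_\epsilon}\in B_{X_G}$, so the symmetrized point already meets the neighbourhood and $\Phi\in\overline{B_X}^{w_G^*}$. Note this direction uses only $\|\Phi\|\le1$ together with the restriction to invariant test functionals.

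For the reverse inclusion $\overline{B_X}^{w_G^*}\subseteq B_{X^{**}_{G^{**}}}$ I would first record that, for a $G^{**}$-invariant functional, the norm is computed on invariant functionals alone: $\|\Phi\|=\sup\{|\langle\Phi,f\rangle|:f\in X_G^*,\ \|f\|\le1\}$, because $\langle\Phi,f\rangle=\langle\Phi,\overline{f}\rangle$ and $\|\overline{f}\|\le\|f\|$. Now take $\Phi\in\overline{B_X}^{w_G^*}$; for each $f\in X_G^*$ and $\epsilon>0$ the basic neighbourhood of $\Phi$ determined by $f$ meets $\pi(B_X)$, so there is $x\in B_X$ with $|\langle\Phi,f\rangle-\langle f,x\rangle|<\epsilon$, whence $|\langle\Phi,f\rangle|\le\|f\|+\epsilon$ and therefore $|\langle\Phi,f\rangle|\le\|f\|$. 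Applying the norm formula to the symmetrization $\overline{\Phi}=\int_G g^{**}(\Phi)\,d\mu(g)\in X^{**}_{G^{**}}$ (with $G^{**}$ acting) then gives $\|\overline{\Phi}\|\le1$, i.e. $\overline{\Phi}\in B_{X^{**}_{G^{**}}}$, with the $G$-invariant Banach--Alaoglu--Bourbaki theorem (Theorem~\ref{Banach-Alaoglu-Bourbaki}) supplying the ambient $w_G^*$-compactness of this ball.

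The main obstacle is this last inclusion, which is genuinely different from the classical setting. Because $X_G^*\subsetneq X^*$ renders $w_G^*$ non-Hausdorff (as in the Property above), the ball $B_{X^{**}}$ is no longer $w_G^*$-closed, so the trivial classical reasoning ``$B_X\subseteq B_{X^{**}}$ and $B_{X^{**}}$ is closed'' is unavailable. The resolution is to pass to $G$-symmetrizations throughout: every $\Phi$ in the closure is $w_G^*$-indistinguishable from the $G^{**}$-invariant point $\overline{\Phi}$, and it is $\overline{\Phi}$ that is detected by the invariant test functionals and lands in $B_{X^{**}_{G^{**}}}$. Consequently the asserted equality is to be read inside the invariant space $X^{**}_{G^{**}}$, and the identities $\langle f,x\rangle=\langle f,\overline{x}\rangle$ and $\overline{\pi(x)}=\pi(\overline{x})$ are exactly what make the two sides coincide.
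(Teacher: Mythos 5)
Your first inclusion, $B_{X^{**}_{G^{**}}}\subseteq\overline{B_{X}}^{w_{G}^{*}}$, is exactly the paper's argument: fix a basic $w_G^*$-neighbourhood determined by $f_1,\dots,f_k\in X_G^*$, set $\gamma_i=\langle\Phi,f_i\rangle$, verify condition (2) of Helly's Lemma~\ref{Helly's lemma} from $\Vert\Phi\Vert\le 1$, and land a point of $\pi(B_X)$ in the neighbourhood. (Your extra symmetrization of $x_\epsilon$ is harmless but unnecessary, since the test functionals are already $G$-invariant.) This is in fact the \emph{only} inclusion the paper proves; its proof stops there.

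On the reverse inclusion you have put your finger on a genuine defect of the statement rather than of your argument. The literal equality fails whenever $X_G^*\subsetneq X^*$: take $X=\mathbb{R}^2$ with the Euclidean norm and $G=\{\mathrm{Id},\sigma\}$ where $\sigma(x,y)=(y,x)$. Then $X_G^*$ is spanned by $(x,y)\mapsto x+y$, a basic $w_G^*$-neighbourhood of $(a,b)$ is a strip $\{(x,y):|x+y-(a+b)|<\delta\}$, and hence $\overline{B_X}^{w_G^*}=\{(a,b):|a+b|\le\sqrt{2}\}$, an unbounded strip, while $B_{X^{**}_{G^{**}}}$ is the diagonal segment $\{(t,t):|t|\le 1/\sqrt{2}\}$. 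What your second paragraph actually establishes is that $|\langle\Phi,f\rangle|\le\Vert f\Vert$ for every $f\in X_G^*$ and every $\Phi$ in the closure, whence $\overline{\Phi}\in B_{X^{**}_{G^{**}}}$; equivalently, $\overline{B_X}^{w_G^*}\cap X^{**}_{G^{**}}=B_{X^{**}_{G^{**}}}$. That is the correct reading of the theorem, it is all that the subsequent proof of Theorem~\ref{Kakutani} uses (only the inclusion $B_{X^{**}_{G^{**}}}\subseteq\overline{\pi(B_X)}^{w_G^*}$ together with $w_G$-compactness is invoked there), and your explicit acknowledgement that the equality must be read ``inside $X^{**}_{G^{**}}$'' is a point the paper leaves entirely unaddressed. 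So: no gap in your reasoning, but the statement you were asked to prove needs the intersection with $X^{**}_{G^{**}}$ (or the passage to symmetrizations) that you supply.
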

\begin{proof}
    Let $\xi \in B_{X^{**}_{G^{**}}}$ and $V$ be a neighborhood of $\xi$ for the $w_{G}^{*}$ topology. We want to see that $V \cap \pi(B_{X}) \neq \emptyset$. Notice that, for some given $f_{1}, \dots, f_{k} \in X^{*}_{G}$, and $\epsilon > 0$:
    \[
    V' = \left\{ \eta \in X^{**} ~~ | ~~ |\langle \eta - \xi, f_{i} \rangle| < \epsilon, \hspace{0.2cm} \forall \, 1 \leq i \leq k, \hspace{0.2cm} f_i\in X_G^* \right\}\subseteq V.
    \]
    Therefore, it is enough to find an $x \in B_{X}$ such that $\pi(x) \in V'$, i.e.,
    \[
    |\langle f_{i}, x \rangle - \langle \xi, f_{i} \rangle| < \epsilon \quad \text{ for all } \, 1 \leq i \leq k.
    \]
    Define $\gamma_{i} = \langle \xi, f_{i} \rangle$, by Lemma \ref{Helly's lemma} it suffices to check that
    \[
    \left|\sum_{i=1}^{k}\beta_{i}\gamma_{i}\right| \leq \left\Vert \sum_{i=1}^{k}\beta_{i}f_{i} \right\Vert.
    \]
    But this is clear since:
    \[
    \left|\sum_{i=1}^{k}\beta_{i}\gamma_{i}\right| = \left|\sum_{i=1}^{k}\beta_{i}\langle \xi, f_{i} \rangle\right| = \left|\left\langle \xi, \sum_{i=1}^{k}\beta_{i}f_{i} \right\rangle\right| \leq \left\Vert \sum_{i=1}^{k}\beta_{i}f_{i} \right\Vert,
    \]
    where we have used that $\Vert \xi \Vert \leq 1$.
\end{proof}

\begin{Theorem}[Group invariant Kakutani's theorem]
    \label{Kakutani}
    Let $X$ be a Banach space and $G$ a compact topological group acting on $X$. Then $X$ is $G$-reflexive if, and only if, $B_{X}$ is compact in the $\sigma_{G}(X, X^{*})$ topology.
\end{Theorem}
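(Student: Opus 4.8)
The plan is to follow the blueprint of the classical Kakutani theorem—using the group invariant Goldstine theorem (Theorem \ref{Goldstine}) and the group invariant Banach--Alaoglu--Bourbaki theorem (Theorem \ref{Banach-Alaoglu-Bourbaki})—while coping with the one feature that makes this setting genuinely different: the topologies $w_G$ and $w_G^*$ are not Hausdorff. This is the main obstacle. In the classical argument one passes from ``$\pi(B_X)$ is $w^*$-compact'' to ``$\pi(B_X)$ is $w^*$-closed'', but compactness no longer forces closedness here. I would neutralize this by systematically pushing every set into the $G$-invariant subspaces $X_G$ and $X^{**}_{G^{**}}$, where the relevant topologies are Hausdorff and the canonical embedding is an honest homeomorphism.

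Concretely, I would first record the properties of the two symmetrization projections $P\colon X\to X_G$, $P(x)=\overline{x}$, and $Q\colon X^{**}\to X^{**}_{G^{**}}$, $Q(\eta)=\overline{\eta}$. Both are norm-one projections, and from $\langle f,\overline{x}\rangle=\langle f,x\rangle$ for $f\in X_G^*$ (the identity already used in Section \ref{Seccio 2}) one sees that $P$ and $Q$ leave all $w_G$- and $w_G^*$-relevant pairings unchanged. Hence $w_G$ is exactly the initial topology induced by $P$ from $(X_G,w_G|_{X_G})$, and similarly for $Q$; moreover a direct computation gives the naturality relations $g^{**}\circ\pi=\pi\circ g$ and $Q\circ\pi=\pi\circ P$. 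Because $X_G^*$ separates the points of $X_G$ and of $X^{**}_{G^{**}}$ (for the latter one uses $\langle\eta,f\rangle=\langle\eta,\overline{f}\rangle$, valid for $\eta\in X^{**}_{G^{**}}$), the restrictions $w_G|_{X_G}$ and $w_G^*|_{X^{**}_{G^{**}}}$ are Hausdorff, and since $\pi$ pulls the latter back to the former, $\pi$ restricts to a homeomorphism of $(X_G,w_G|_{X_G})$ onto $(\pi(X_G),w_G^*|_{\pi(X_G)})$. Finally, applying Theorem \ref{Banach-Alaoglu-Bourbaki} to the space $X^*$ shows that $B_{X^{**}}$ is $w_G^*$-compact, whence $B_{X^{**}_{G^{**}}}=Q(B_{X^{**}})$ is $w_G^*$-compact as the continuous image of a compact set.

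For the implication ``$G$-reflexive $\Rightarrow B_X$ is $w_G$-compact'' I would argue as follows. Since $\pi$ is an isometry, $\pi(B_{X_G})=\pi(X_G)\cap B_{X^{**}}$, so $G$-reflexivity ($\pi(X_G)=X^{**}_{G^{**}}$, Definition \ref{Injeccio canonica G-inv}) gives $\pi(B_{X_G})=B_{X^{**}_{G^{**}}}$, which is $w_G^*$-compact by the previous paragraph. Transporting this through the homeomorphism $\pi$ yields that $B_{X_G}$ is $w_G|_{X_G}$-compact. To lift compactness from $B_{X_G}$ to $B_X$, take any net $(x_\alpha)\subseteq B_X$; then $(\overline{x_\alpha})\subseteq B_{X_G}$ has a subnet with $\overline{x_{\alpha_\beta}}\to y\in B_{X_G}$ in $w_G$, and since $\langle f,x_{\alpha_\beta}\rangle=\langle f,\overline{x_{\alpha_\beta}}\rangle\to\langle f,y\rangle$ for every $f\in X_G^*$, the same subnet converges to $y\in B_{X_G}\subseteq B_X$ in $w_G$. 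Hence $B_X$ is $w_G$-compact.

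For the converse ``$B_X$ is $w_G$-compact $\Rightarrow G$-reflexive'' I would run the argument in reverse. The projection $P$ is $w_G$-continuous and $P(B_X)=B_{X_G}$, so $B_{X_G}$ is $w_G|_{X_G}$-compact; via the homeomorphism $\pi$, the set $\pi(B_{X_G})$ is $w_G^*$-compact, and because $w_G^*$ is Hausdorff on $X^{**}_{G^{**}}$ it is $w_G^*$-closed there. On the other hand, Theorem \ref{Goldstine} gives $B_{X^{**}_{G^{**}}}\subseteq\overline{\pi(B_X)}^{\,w_G^*}$, and since each $\pi(x)$ is $w_G^*$-indistinguishable from $Q\pi(x)=\pi(\overline{x})\in\pi(B_{X_G})$, this closure coincides with $\overline{\pi(B_{X_G})}^{\,w_G^*}$; intersecting with the Hausdorff subspace $X^{**}_{G^{**}}$ collapses it to $\pi(B_{X_G})$. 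Thus $B_{X^{**}_{G^{**}}}\subseteq\pi(B_{X_G})$, and a homogeneity (scaling) argument promotes this to $X^{**}_{G^{**}}\subseteq\pi(X_G)$; as the reverse inclusion always holds, $\pi(X_G)=X^{**}_{G^{**}}$, i.e.\ $X$ is $G$-reflexive. The delicate point throughout—and the step I would double-check most carefully—is precisely the replacement of ``compact $\Rightarrow$ closed'' by ``compact $\Rightarrow$ closed inside the Hausdorff invariant subspace'', which is what renders the non-Hausdorff topologies harmless here.
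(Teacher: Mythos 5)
Your proof is correct, and although it uses the same basic ingredients as the paper (the group invariant Goldstine and Banach--Alaoglu--Bourbaki theorems together with the canonical embedding $\pi$), the execution is genuinely different and noticeably more complete. The paper's forward direction asserts that ``$B_X$ is compact in the $\sigma(X^{**},X^*)$ topology'' and then pulls back through $\pi^{-1}$, which implicitly uses classical reflexivity and is only meaningful on $\pi(X)$; your version avoids this by first obtaining $w_G^*$-compactness of $B_{X^{**}_{G^{**}}}=Q(B_{X^{**}})$, transporting it through the homeomorphism $\pi|_{X_G}$, and then lifting compactness from $B_{X_G}$ to $B_X$ by symmetrizing nets --- a device close in spirit to the covering argument the paper uses later in Theorem \ref{Coimplicacio G-reflexivitat}. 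More importantly, in the reverse direction the paper simply writes ``by Theorem \ref{Goldstine}, $\pi(B_{X_G})=B_{X^{**}_{G^{**}}}$'' without ever invoking the compactness hypothesis; as you correctly observe, Goldstine only yields $B_{X^{**}_{G^{**}}}\subseteq\overline{\pi(B_X)}^{\,w_G^*}$, and removing the closure requires exactly the step you supply: $\pi(B_{X_G})$ is $w_G^*$-compact, hence closed \emph{in the relative topology of the Hausdorff subspace} $X^{**}_{G^{**}}$ (where $X_G^*$ separates points via $\langle\eta,f\rangle=\langle\eta,\overline{f}\rangle$), and the closure of $\pi(B_X)$ agrees with that of $\pi(B_{X_G})$ because the two sets are $w_G^*$-indistinguishable. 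Your treatment of the non-Hausdorff issue is therefore not just a stylistic variant: it fills a genuine gap in the published argument, at the cost of having to verify the auxiliary properties of the projections $P$ and $Q$ and the separation statements, all of which you justify correctly.
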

\begin{proof}
    To prove the if statement, assume $X$ is $G$-reflexive. Then, since $w_{G} \subseteq w$ and we know that $B_{X}$ is compact in the $\sigma(X^{**}, X^{*})$ topology, we have that $B_{X}$ is compact in the $\sigma_{G}(X^{**}, X^{*})$ topology. So, it only remains to show that $\pi^{-1}$ is continuous from $(X^{**}, \sigma_{G}(X^{**}, X^{*}))$ to $(X, \sigma_{G}(X, X^{*}))$. For this, fix $f \in X^{*}$. If we show that $\langle f, \pi^{-1}\xi \rangle$ is continuous on $(X^{**}, \sigma_{G}(X^{**}, X^{*}))$, then it is clear that $\pi^{-1}$ is a continuous mapping. Notice that:
    \[
    \langle f, \pi^{-1}\xi \rangle = \langle \xi, f \rangle.
    \]
    And the mapping $\xi \mapsto \langle \xi, f \rangle$ is continuous on $X^{**}$ for the $w_{G}^{*}$ topology. Hence, $B_{X}$ is $w_{G}$-compact.
        
   For the reverse implication, we know by Theorem \ref{Goldstine} that $\pi(B_{X_G}) = B_{X^{**}_{G^{**}}}$. As a consequence, $\pi(X_{G}) = X^{**}_{G^{**}}$.
\end{proof}

To conclude this section, we aim to emphasize the relationships among \(X\), \(X_G\), \(X^{**}\), \(X_{G^{**}}^{**}\), and \(\faktor{X^{**}}{G^{*}}\). To illustrate these connections, we will demonstrate that the following commutative diagram holds:

\[
\begin{tikzcd}
  X_{G} \arrow{dr}{\pi_{G}} & \\
  X \arrow{u}{S_{G}}\arrow{r}{\pi_{G} \circ S_{G}} \arrow{dr}{S_{G^{*}} \circ \pi} \arrow{d}{\pi} & X^{**}_{G^{**}} \arrow{d}{\cong} \\
  X^{**} \arrow{r}{S_{G^{*}}} & \faktor{X^{**}}{G^{*}} \\
\end{tikzcd}
\]
where $\pi$ denotes the canonical injection from $X$ into $X^{**}$, $\pi_{G}$ denotes the canonical injection of Definition \ref{Injeccio canonica G-inv}, $S_{G}$ denotes the symmetrization of the points in $X$, and $S_{G^{*}}$ denotes the symmetrization of the points in $X^{**}$.

To clarify the notation, we recall that
\[
X^{**}_{G^{*}} = \left\{f \in X^{**} ~~ | ~~ f \hbox{ is } G^{*}\hbox{-invariant}\right\},
\]\[
X^{**}_{G^{**}} = \left\{x^{**} \in X^{**} ~~ | ~~ g^{**}(x^{**}) = x^{**}\right\},
\]
where in the first case we are considering functionals that are $G^*$ invariant and in the second case we are considering points that are $G^{**}$ invariant.

Let us start by showing that the previous two spaces coincide.
\begin{Proposition}
   Let $X$ be a Banach space and $G$ a compact topological group acting on $X$. Then, $X^{**}_{G^{**}} = X^{**}_{G^{*}}$.
\end{Proposition}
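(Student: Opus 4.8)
The plan is to reduce both membership conditions to the same duality identity, so that the equality of the two sets becomes an immediate consequence of how the bi-adjoint is defined. The only ingredient needed is the standard relation between $g$, its adjoint $g^{*}$, and its bi-adjoint $g^{**}$: for every $f \in X^{*}$ and $x \in X$ one has $\langle g^{*}(f), x \rangle = \langle f, g(x) \rangle$, and, dualizing once more, for every $x^{**} \in X^{**}$ and $f \in X^{*}$ one has $\langle g^{**}(x^{**}), f \rangle = \langle x^{**}, g^{*}(f) \rangle$.

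First I would fix $x^{**} \in X^{**}$ and unwind membership in $X^{**}_{G^{**}}$. By definition this means $g^{**}(x^{**}) = x^{**}$ for every $g \in G$. Since an element of $X^{**}$ is determined by its action on $X^{*}$, this is equivalent to $\langle g^{**}(x^{**}), f \rangle = \langle x^{**}, f \rangle$ for all $f \in X^{*}$ and all $g \in G$.

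Next I would rewrite the left-hand side by means of the bi-adjoint identity above, obtaining the equivalent condition $\langle x^{**}, g^{*}(f) \rangle = \langle x^{**}, f \rangle$ for all $f \in X^{*}$ and all $g \in G$. Because $G^{*} = \{ g^{*} \mid g \in G \}$, this is exactly the statement that $x^{**}$, regarded as a mapping $X^{*} \to \mathbb{K}$, is $G^{*}$-invariant in the sense of the mapping notion of invariance, that is, $x^{**} \in X^{**}_{G^{*}}$. As every step in this chain is an equivalence, both inclusions are established simultaneously.

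I do not expect any genuine obstacle here: the statement is a definition chase, and the single point that requires care is keeping the two distinct notions of invariance apart — the \emph{point} invariance defining $X^{**}_{G^{**}}$, coming from the action of $G^{**}$ on $X^{**}$, versus the \emph{mapping} invariance defining $X^{**}_{G^{*}}$, namely invariance of the functional $x^{**}$ under the action of $G^{*}$ on its domain $X^{*}$ — and then recognizing that the bi-adjoint identity collapses both into one and the same equation.
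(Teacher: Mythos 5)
Your argument is correct and follows essentially the same route as the paper's proof: both rest on the single duality identity $\langle g^{**}(x^{**}), f \rangle = \langle x^{**}, g^{*}(f) \rangle$, the only cosmetic difference being that you phrase it as one chain of equivalences while the paper verifies the two inclusions separately.
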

\begin{proof}
    Pick $x^{**} \in X^{**}_{G^{**}}$ and $y^{*} \in X^{*}$, then:
    \[
    \langle x^{**}, g^{*}(y^{*}) \rangle = \langle g^{**}(x^{**}), y^{*} \rangle = \langle x^{**}, y^{*} \rangle.
    \]
    So, $x^{**} \in X^{**}_{G^{*}}$

    Let now $f \in X^{**}_{G^{*}}$ and $x^{*} \in X^{*}$, observe that
    \[
    \langle g^{**}(f), x^{*} \rangle = \langle f, g^{*}(x^{*}) \rangle = \langle f, x^{*} \rangle.
    \]
    So, $f \in X^{**}_{G^{**}}$.
\end{proof}

The following result establishes a fundamental relationship between $G^{*}$-invariant functionals and $G$-invariant points. Specifically, it demonstrates that any $G^{*}$-invariant functional can be derived from a $G$-invariant point, and conversely, every $G$-invariant point induces a corresponding $G^{*}$-invariant functional.

\begin{Proposition}
    Let $X$ be a Banach space and $G$ a compact topological group acting on $X$. Then, $x \in X_{G} \Leftrightarrow x^{**} \in X^{**}_{G^{*}}$
\end{Proposition}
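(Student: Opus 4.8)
The plan is to read the symbol $x^{**}$ as the canonical image $\pi(x) \in X^{**}$ and to reduce the entire statement to a single naturality identity, namely $g^{**} \circ \pi = \pi \circ g$, combined with the injectivity of $\pi$. First I would record this commutation relation: for every $g \in G$, $x \in X$ and $x^{*} \in X^{*}$,
\[
\langle g^{**}(\pi(x)), x^{*} \rangle = \langle \pi(x), g^{*}(x^{*}) \rangle = \langle g^{*}(x^{*}), x \rangle = \langle x^{*}, g(x) \rangle = \langle \pi(g(x)), x^{*} \rangle,
\]
so that $g^{**}(\pi(x)) = \pi(g(x))$ for all $g \in G$ and $x \in X$. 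I would also invoke the previous proposition, which allows me to replace $G^{*}$-invariance of an element of $X^{**}$ by $G^{**}$-invariance, i.e.\ to use $X^{**}_{G^{*}} = X^{**}_{G^{**}}$ throughout.

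For the forward implication, assume $x \in X_{G}$, i.e.\ $g(x) = x$ for every $g \in G$. The commutation relation then gives $g^{**}(\pi(x)) = \pi(g(x)) = \pi(x)$ for every $g$, so $\pi(x) \in X^{**}_{G^{**}} = X^{**}_{G^{*}}$, which is exactly the assertion $x^{**} \in X^{**}_{G^{*}}$.

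For the reverse implication, assume $x^{**} = \pi(x) \in X^{**}_{G^{*}} = X^{**}_{G^{**}}$. Then $g^{**}(\pi(x)) = \pi(x)$ for every $g \in G$, and the commutation relation rewrites the left-hand side as $\pi(g(x))$, yielding $\pi(g(x)) = \pi(x)$. Since the canonical embedding $\pi$ is injective, this forces $g(x) = x$ for every $g \in G$, i.e.\ $x \in X_{G}$.

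The argument is entirely formal once the identity $g^{**} \circ \pi = \pi \circ g$ is established; the only step requiring any care is the reverse direction, where passing from $\pi(g(x)) = \pi(x)$ to $g(x) = x$ genuinely relies on the injectivity of $\pi$ rather than on the displayed equality alone. Beyond fixing the convention that $x^{**}$ stands for $\pi(x)$, I do not anticipate any real obstacle.
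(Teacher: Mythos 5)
Your proof is correct and follows essentially the same route as the paper: the chain of equalities you use to establish $g^{**}\circ\pi=\pi\circ g$ is exactly the duality computation in the paper's proof, and your appeal to the injectivity of $\pi$ in the reverse direction plays the same role as the paper's citation of \cite[Lemma 5]{FaIs} (i.e.\ that $X^{*}$ separates the points of $X$). The only cosmetic difference is that you reroute through the preceding proposition $X^{**}_{G^{*}}=X^{**}_{G^{**}}$ so as to speak of fixed points of $G^{**}$, whereas the paper works directly with the $G^{*}$-invariance of the functional $x^{**}$.
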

\begin{proof}
    Take $x \in X_{G}$, observe that
    \[
    \langle x^{**}, g^{*}(y) \rangle = \langle g^{*}(y), x \rangle = \langle y, g(x) \rangle = \langle y, x \rangle = \langle x^{**}, y \rangle,
    \]
    where we have used that $x$ is $G$-invariant.

    Take now $x^{**} \in X^{**}_{G^{*}}$, it is clear that
    \[
    \langle y, g(x) \rangle = \langle x^{**}, g^{*}(y) \rangle = \langle x^{**}, y \rangle = \langle y, x \rangle \quad \forall \, y \in X^{*}.
    \]
    Applying now \cite[Lemma 5]{FaIs} we deduce that $g(x) = x$.
\end{proof}

\section{James' theorem, Krein-Smulian theorem, and Moreau's maximum formula}\label{Seccio 3}

\subsection{Hahn-Banach extension theorem}

We now aim to present a version of the Hahn-Banach extension theorem. This result complements the results presented in \cite{DFJ}. To achieve this, we first need to introduce the following well known result, which plays a critical role in the extension process. The proof of this result can be found in \cite[Lemma 3.9]{Montesinos1}.

\begin{Lemma}\label{Lema 3.9 llibre dels 6}
Let $X$ be a vectorial space, and let $f, f_{1}, \dots, f_{n}$ be linear functionals in $X$. If $\cap_{i=1}^{n}f_{i}^{-1}(0) \subseteq f^{-1}(0)$, then $f$ is a linear combination of $f_{1}, \dots, f_{n}$.
\end{Lemma}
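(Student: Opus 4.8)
The plan is to reduce this to finite-dimensional linear algebra by bundling the functionals $f_1,\dots,f_n$ into a single operator into $\mathbb{K}^n$. Concretely, I would define the linear map $T\colon X \to \mathbb{K}^n$ by $T(x) = (f_1(x),\dots,f_n(x))$ and observe that its kernel is exactly $\ker T = \bigcap_{i=1}^n f_i^{-1}(0)$. The hypothesis then reads $\ker T \subseteq f^{-1}(0) = \ker f$, which is precisely the condition that forces $f$ to factor through $T$.

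The central step is to produce this factorization. I would define a functional $\varphi$ on the image $T(X) \subseteq \mathbb{K}^n$ by setting $\varphi(T(x)) = f(x)$. The one point requiring verification is that this is well defined: if $T(x) = T(x')$ then $x - x' \in \ker T \subseteq \ker f$, so $f(x) = f(x')$. Linearity of $\varphi$ on $T(X)$ is immediate from the linearity of $T$ and $f$. This well-definedness check is the heart of the argument and the only place where the kernel-inclusion hypothesis is genuinely used, although it is entirely routine once $T$ is in place.

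Since $T(X)$ is a subspace of the finite-dimensional space $\mathbb{K}^n$, I would then extend $\varphi$ to a linear functional $\tilde\varphi$ on all of $\mathbb{K}^n$ (take a basis of $T(X)$, complete it to a basis of $\mathbb{K}^n$, and extend by an arbitrary, say zero, assignment on the added basis vectors). Every linear functional on $\mathbb{K}^n$ has the form $\tilde\varphi(y_1,\dots,y_n) = \sum_{i=1}^n \alpha_i y_i$ for suitable scalars $\alpha_1,\dots,\alpha_n$. Evaluating along $T$ yields
\[
f(x) = \varphi(T(x)) = \tilde\varphi\bigl(f_1(x),\dots,f_n(x)\bigr) = \sum_{i=1}^n \alpha_i f_i(x)
\]
for every $x \in X$, which is exactly the desired conclusion $f = \sum_{i=1}^n \alpha_i f_i$.

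I do not anticipate a genuine obstacle, as this is a standard fact; the only steps demanding care are the well-definedness of $\varphi$ (where the hypothesis enters) and the extension step (which relies solely on finite-dimensionality of $\mathbb{K}^n$). An alternative route would be induction on $n$: the case $n=1$ is direct, and in the inductive step one restricts attention to $\ker f_n$ and applies the hypothesis to the remaining functionals. I would nonetheless favour the factorization argument, since it avoids case analysis and makes transparent precisely where each hypothesis is invoked.
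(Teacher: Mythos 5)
Your argument is correct and complete: the factorization of $f$ through $T(x)=(f_1(x),\dots,f_n(x))$, with well-definedness supplied exactly by the kernel inclusion, is the standard proof of this fact. The paper itself does not prove the lemma but defers to \cite[Lemma 3.9]{Montesinos1}, whose proof is essentially the same operator-factorization argument you give, so there is nothing to add.
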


\begin{Theorem}\label{Hahn-Banach en el predual}
Let $X$ be a Banach space, $G$ a compact topological group acting on $X$, and let $A$ be $w^{*}_{G}$-closed, convex and $G^{*}$-invariant in $X^{*}$. If $f \in X^{*} \backslash A$ is $G$-invariant, then there exists a $G$-invariant point, $x$, such that $\sup_{h \in A}\langle h, x \rangle < \langle f, x \rangle$.
\end{Theorem}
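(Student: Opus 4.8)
The plan is to deduce the statement from the classical geometric Hahn--Banach separation theorem and then to \emph{symmetrize} the resulting separating point so that it becomes $G$-invariant. First I would observe that, since $X_G^* \subseteq X^*$, the topology $w_G^*$ is generated by fewer functionals than $w^*$ and hence $w_G^* \subseteq w^*$; consequently every $w_G^*$-closed set is automatically $w^*$-closed. In particular $A$ is a $w^*$-closed, convex subset of $X^*$ with $f \notin A$. Applying the second geometric form of the Hahn--Banach theorem in the locally convex space $(X^*, w^*)$, whose topological dual is $X$, I obtain a point $x_0 \in X$ and a scalar $\alpha \in \mathbb{R}$ with
\[
\sup_{h \in A}\operatorname{Re}\langle h, x_0\rangle \le \alpha < \operatorname{Re}\langle f, x_0\rangle,
\]
where the real parts may be dropped when $\mathbb{K} = \mathbb{R}$.

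The point $x_0$ need not be $G$-invariant, so I would replace it by its $G$-symmetrization $x := \overline{x_0} = \int_G g(x_0)\, d\mu(g)$, which lies in $X_G$ by left-invariance of the Haar measure (see \cite{FaIs}). It then remains to check that $x$ still separates $f$ from $A$. For the functional $f$, using that $f$ is $G$-invariant (so $g^*(f) = f$) and that the duality commutes with the Bochner integral,
\[
\langle f, x\rangle = \int_G \langle f, g(x_0)\rangle\, d\mu(g) = \int_G \langle g^*(f), x_0\rangle\, d\mu(g) = \langle f, x_0\rangle.
\]
For the set $A$, I would fix $h \in A$ and use that $A$ is $G^*$-invariant, so that $g^*(h) \in A$ and hence $\operatorname{Re}\langle g^*(h), x_0\rangle \le \alpha$ for every $g \in G$; integrating and using $\mu(G) = 1$ gives
\[
\operatorname{Re}\langle h, x\rangle = \int_G \operatorname{Re}\langle g^*(h), x_0\rangle\, d\mu(g) \le \alpha.
\]
Taking the supremum over $h \in A$ and combining with the identity for $f$ yields $\sup_{h \in A}\operatorname{Re}\langle h, x\rangle \le \alpha < \operatorname{Re}\langle f, x_0\rangle = \operatorname{Re}\langle f, x\rangle$, which is the required strict separation by the $G$-invariant point $x$.

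The only genuinely delicate step is the passage from $x_0$ to $\overline{x_0}$: the classical theorem gives no control on the invariance of the separating point, and it is precisely here that both hypotheses enter. The $G$-invariance of $f$ ensures that symmetrization does not disturb the value $\langle f, x_0\rangle$, while the $G^*$-invariance of $A$, together with the linearity and monotonicity of the Bochner integral, ensures that the upper bound $\alpha$ on $A$ survives the averaging. I would also note an alternative, more intrinsic route: one can argue directly in $(X^*, w_G^*)$, whose topological dual is exactly $X_G$ by Lemma \ref{Lema 3.9 llibre dels 6}, so that any $w_G^*$-continuous separating functional is automatically represented by a $G$-invariant point; the failure of $w_G^*$ to be Hausdorff is harmless because a $w_G^*$-closed convex set is saturated for the indistinguishability subspace and thus descends to the Hausdorff quotient. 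Finally, in the complex case the conclusion is understood for real parts, and one passes to genuine complex duality in the standard fashion.
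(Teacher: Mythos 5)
Your proof is correct, and it takes a genuinely different route from the paper's. The paper works entirely inside the $w^{*}_{G}$ topology: it picks a basic $w^{*}_{G}$-neighbourhood $U$ of $0$ determined by finitely many points $x_{1},\dots,x_{n}\in X_{G}$ with $(f+U)\cap A=\emptyset$, separates $f$ from the open convex set $A+U$ by some $F\in X^{**}$, and then shows that $F$ is bounded above on $U$, hence vanishes on $\bigcap_{i}x_{i}^{-1}(0)$, hence by Lemma \ref{Lema 3.9 llibre dels 6} lies in $\operatorname{span}\{x_{1},\dots,x_{n}\}\subseteq X_{G}$. You instead observe that $w^{*}_{G}$-closed implies $w^{*}$-closed, invoke the classical strict separation in the Hausdorff locally convex space $(X^{*},w^{*})$, whose dual is $\pi(X)$, to get a separating point $x_{0}\in X$, and then average: the $G$-invariance of $f$ keeps $\langle f,\overline{x_{0}}\rangle=\langle f,x_{0}\rangle$, while the $G^{*}$-invariance of $A$ together with $\mu(G)=1$ keeps the bound $\alpha$ on $A$. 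Your argument is more elementary (no Helly-type lemma, no identification of the dual of $w^{*}_{G}$) and makes visible where the two invariance hypotheses enter; it even shows that only $w^{*}$-closedness of $A$ is needed. What the paper's route buys is the extra precision that the separating point can be taken in the span of the finitely many $G$-invariant points defining a single separating neighbourhood, and it avoids any appeal to the Bochner integral and Haar measure; but for the stated conclusion both are equally adequate. Your closing ``intrinsic route'' remark is essentially a sketch of the paper's actual proof, so the two strategies are not as far apart as they first appear.
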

\begin{proof}
Since $A$ is $w^{*}_{G}$-closed, then, there exists $U$ a $w^{*}_{G}$-neighbourhood of the zero such that $(f + U) \cap A = \emptyset$. We can assume that $U$ is a convex neighbourhood of the zero of the form 
\[
U = \left\{y^{*} \in U^{*} ~ | ~ |\langle y^{*}, x_{i} \rangle| < \epsilon \hspace{0.2cm} \forall \, 1 \leq i \leq n\right\}
\]
for some $x_{1}, \dots, x_{n} \in X_{G}$ and $\epsilon > 0$. By simmetry of $U$ it is clear that $f \notin U + A$, and since $A + U$ is $w^{*}_{G}$-open, then $A + U$ is open and convex. Applying now Hahn-Banach separation theorem to $f$ and $A + U$, we know that there exists a $G$-invariant functional $F \in X^{**}$ such that
\[
\langle F, f \rangle > \sup_{h \in A+U}\langle F, h \rangle \geq \sup_{h \in A}\langle F, h \rangle.
\]
We claim that $F = \pi(x)$ for some $x \in X_{G}$. Fix $h_{0} \in A_{G}$ and observe that
\[
C = \sup_{U}(F) \leq \langle F, f \rangle - \langle F, h_{0} \rangle < +\infty.
\]
Consider now the points $x_{i}$ as $G$-invariant linear functionals in $X^{*}$. Let $y^{*} \in \cap_{i}x_{i}^{-1}(0)$, then $ty^{*} \in U$ for all $t > 0$. Therefore, $F(ty^{*}) \leq C$, in particular
\[
F(y^{*}) \leq \frac{C}{t} \hspace{0.2cm} \hbox{ and } \hspace{0.2cm} F(-y^{*}) \leq \frac{C}{t}.
\]
Hence $F(y^{*}) = 0$, and we just obtained that $\cap_{i}x_{i}^{-1}(0) \subseteq F^{-1}(0)$. Applying now Lemma \ref{Lema 3.9 llibre dels 6}, we deduce that $F$ is a linear combination of $x_{1}, \dots, x_{n}$ so $F \in X_{G}$ and
\[
\langle f, F \rangle > \sup_{h \in A} \langle h, F \rangle.
\]
\end{proof}

\begin{Theorem}\label{Teorema d'assolir la norma de Hahn-Banach}
Let $X$ be a Banach space and $G$ a compact topological group acting on $X$. For every $x_{0} \in X_{G}$, there exists $f_{0} \in X_{G}^{*}$ such that
\[
\Vert f_{0} \Vert = \Vert x_{0} \Vert \hspace{0.2cm} \hbox{ and } \hspace{0.2cm} \langle f_{0}, x_{0} \rangle = \Vert x_{0} \Vert^{2}
\]
\end{Theorem}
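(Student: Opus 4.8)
The plan is to reduce to the classical Hahn--Banach corollary and then repair the failure of $G$-invariance by the averaging device already used elsewhere in the paper. If $x_{0} = 0$ the statement is trivial (take $f_{0} = 0$), so assume $x_{0} \neq 0$. First I would apply the usual Hahn--Banach theorem to produce an auxiliary functional $f \in X^{*}$, not necessarily $G$-invariant, with $\Vert f \Vert = \Vert x_{0} \Vert$ and $\langle f, x_{0} \rangle = \Vert x_{0} \Vert^{2}$; concretely, pick a norm-one $h \in X^{*}$ with $\langle h, x_{0} \rangle = \Vert x_{0} \Vert$ and set $f = \Vert x_{0} \Vert\, h$. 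The only defect of $f$ is that it need not lie in $X_{G}^{*}$.

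To fix this, I would symmetrize $f$ in the dual, setting
\[
f_{0} = \int_{G} g^{*}(f)\, d\mu(g),
\]
the Bochner integral with respect to the Haar measure. This is well defined because $G$ is compact and $g \mapsto g^{*}(f)$ is continuous into $X^{*}$, hence bounded and Bochner integrable. By construction $f_{0}$ is $G^{*}$-invariant, and since $\langle g^{*}(f), x \rangle = \langle f, g(x) \rangle$, being $G^{*}$-invariant is precisely being $G$-invariant as a functional, so $f_{0} \in X_{G}^{*}$.

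It then remains to verify the two quantitative conditions. For the norm, each $g^{*}$ is an isometry on $X^{*}$ (the adjoint of the isometric isomorphism $g$), so $\Vert g^{*}(f) \Vert = \Vert f \Vert = \Vert x_{0} \Vert$, and the triangle inequality for the Bochner integral together with $\mu(G) = 1$ gives $\Vert f_{0} \Vert \leq \Vert x_{0} \Vert$. For the value, I would pull the duality pairing inside the integral and use crucially that $x_{0} \in X_{G}$:
\[
\langle f_{0}, x_{0} \rangle = \int_{G} \langle g^{*}(f), x_{0} \rangle\, d\mu(g) = \int_{G} \langle f, g(x_{0}) \rangle\, d\mu(g) = \int_{G} \langle f, x_{0} \rangle\, d\mu(g) = \Vert x_{0} \Vert^{2},
\]
where $g(x_{0}) = x_{0}$ by $G$-invariance of $x_{0}$. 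Finally, combining $\Vert x_{0} \Vert^{2} = \langle f_{0}, x_{0} \rangle \leq \Vert f_{0} \Vert\, \Vert x_{0} \Vert$ with the reverse bound $\Vert f_{0} \Vert \leq \Vert x_{0} \Vert$ yields $\Vert f_{0} \Vert = \Vert x_{0} \Vert$.

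The only real subtlety---the main obstacle---is justifying that symmetrization preserves everything at once: that the dual symmetrization is well defined as a Bochner integral, that the pairing commutes with it, and that $g^{*}$ is an isometry so the norm cannot increase under averaging. Once these points are in place the argument is forced, and the genuinely new ingredient compared with the classical statement is that the invariance of the \emph{target} point $x_{0}$ is exactly what lets the averaged functional retain the full value $\Vert x_{0} \Vert^{2}$ rather than merely a lower bound.
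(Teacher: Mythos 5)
Your argument is correct, and it reaches the result by a genuinely different route from the paper's. The paper works on the one-dimensional $G$-invariant subspace $H=\mathbb{R}x_{0}$, defines the linear functional $h(tx_{0})=t\Vert x_{0}\Vert^{2}$ there, and invokes the group invariant Hahn--Banach extension theorem of \cite[Proposition 1]{Falco} to obtain in one step a $G$-invariant extension $f_{0}$ with $\Vert f_{0}\Vert_{X^{*}}=\Vert h\Vert_{H^{*}}=\Vert x_{0}\Vert$ and $\langle f_{0},x_{0}\rangle=h(x_{0})=\Vert x_{0}\Vert^{2}$. You instead use only the classical Hahn--Banach corollary and then restore invariance by symmetrizing in the dual, $f_{0}=\int_{G}g^{*}(f)\,d\mu(g)$, recovering the norm equality a posteriori from the averaging bound $\Vert f_{0}\Vert\leq\Vert x_{0}\Vert$ combined with $\Vert x_{0}\Vert^{2}=\langle f_{0},x_{0}\rangle\leq\Vert f_{0}\Vert\,\Vert x_{0}\Vert$. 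In effect you have inlined the proof of the cited invariant extension theorem, which is itself obtained by exactly this kind of averaging; your version is more self-contained but must justify the Bochner integrability of $g\mapsto g^{*}(f)$ over the compact group, the commutation of the pairing with the integral, and the $G^{*}$-invariance of the average (which rests on the bi-invariance of the Haar measure) --- all points you correctly flag and which match the symmetrization machinery the paper uses elsewhere, for instance in the proof of Theorem \ref{Krein-Smulian}. The paper's route is shorter given the cited theorem; yours makes the mechanism explicit and isolates precisely where the $G$-invariance of $x_{0}$ enters, namely in keeping the value $\langle f_{0},x_{0}\rangle$ equal to $\Vert x_{0}\Vert^{2}$ after averaging.
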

\begin{proof}
Define $H = \mathbb{R}x_{0}$, which is a $G$-invariant subspace, and
\[
\begin{array}{cccc}
  h \colon & H & \to & \mathbb{R}  \\
     & tx_{0} & \mapsto & \Vert x_{0} \Vert^{2}.
\end{array}
\]
Take $p(x) = \Vert h \Vert_{H^{*}}\Vert x \Vert$, then by the $G$-invariant version of the Hahn-Banach extension theorem (\cite[Proposition 1]{Falco}), we know that there exists a $G$-invariant functional, say $f_{0}$, such that $\Vert f_{0} \Vert_{X^{*}} = \Vert h \Vert_{H^{*}}$. Therefore:
\begin{enumerate}
    \item $\langle f_{0}, x_{0} \rangle = h(x_{0}) = \Vert x_{0} \Vert^{2}$.
    \item $\Vert f_{0} \Vert_{X^{*}} = \Vert h \Vert_{H^{*}} = \Vert x_{0} \Vert$.
\end{enumerate}
\end{proof}

\subsection{James' theorem}

Thanks to Theorem \ref{Banach-Alaoglu-Bourbaki}, we can obtain the following result that provides a key characterization of $G$-reflexivity in terms of the space of $G$-invariant points. Specifically, it asserts that the space $X$ is $G$-reflexive if, and only if, the space of $G$-invariant points, $X_G$, is reflexive. This result emphasizes the relationship between the group action and the reflexive properties of the subspace of $G$-invariant points.

\begin{Theorem}\label{Coimplicacio G-reflexivitat}
    Let $X$ be a Banach space and $G$ a compact topological group acting on $X$. Then, $X$ is $G$-reflexive if, and only if, $X_{G}$ is reflexive.
\end{Theorem}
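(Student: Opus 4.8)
The plan is to use the group invariant Kakutani theorem (Theorem \ref{Kakutani}) together with the classical Kakutani theorem, bridging the two by means of the symmetrization operator $S_{G}\colon X\to X_{G}$, $S_{G}(x)=\overline{x}$. The central observation is that $X_{G}$ is a closed subspace of $X$ (being the common fixed-point set of a family of isometries) and that $S_{G}$ is a norm-one linear projection of $X$ onto $X_{G}$: it is linear because the Haar integral and each $g$ are linear, it satisfies $\Vert S_{G}x\Vert\le\int_{G}\Vert g(x)\Vert\,d\mu(g)=\Vert x\Vert$, and it fixes $X_{G}$ pointwise while $\overline{g(x)}=\overline{x}$ forces its range to be exactly $X_{G}$. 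In particular $S_{G}(B_{X})=B_{X_{G}}$.

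First I would record the two facts that make $S_{G}$ interact well with the $G$-invariant topology. On the one hand, every $f\in X_{G}^{*}$ satisfies $f=f\circ S_{G}$ (this is the identity $\langle f,x\rangle=\langle f,\overline{x}\rangle$ noted in the excerpt). On the other hand, every $\varphi\in (X_{G})^{*}$ lifts to the $G$-invariant functional $\varphi\circ S_{G}\in X_{G}^{*}$, whose restriction to $X_{G}$ is $\varphi$. From these two facts $S_{G}\colon (X,w_{G})\to(X_{G},w)$ is continuous, and --- this is the point I would argue most carefully --- the topology $w_{G}$ restricted to $B_{X}$ is exactly the initial topology pulled back by $S_{G}$ from the weak topology of $B_{X_{G}}$: each $w_{G}$-subbasic set $\{x:\langle f,x-x_{0}\rangle<\epsilon\}$ with $f\in X_{G}^{*}$ equals $S_{G}^{-1}$ of the corresponding weak subbasic set determined by $f|_{X_{G}}$ and $\overline{x_{0}}$, and conversely every weak subbasic set in $X_{G}$ pulls back, via the lifting $\varphi\mapsto\varphi\circ S_{G}$, to a $w_{G}$-subbasic set. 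Since $S_{G}^{-1}$ commutes with unions and finite intersections, this matches the two topologies on the ball.

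The next step is a purely topological lemma: if $q\colon E\to F$ is surjective and $E$ carries the initial topology $\{q^{-1}(U):U\text{ open in }F\}$, then $E$ is compact if and only if $F$ is compact, because open covers of one side correspond to open covers of the other and finite subcovers transport through $q^{-1}$. Applying this with $q=S_{G}|_{B_{X}}\colon(B_{X},w_{G})\to(B_{X_{G}},w)$ yields the key equivalence
\[
B_{X}\text{ is }w_{G}\text{-compact}\iff B_{X_{G}}\text{ is weakly compact}.
\]
Combining the left-hand side with Theorem \ref{Kakutani}, which says $X$ is $G$-reflexive if and only if $B_{X}$ is $\sigma_{G}(X,X^{*})$-compact, and the right-hand side with the classical Kakutani theorem, which says $X_{G}$ is reflexive if and only if $B_{X_{G}}$ is weakly compact, gives the desired biconditional at once, proving both implications simultaneously.

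I expect the main obstacle to be the second paragraph: establishing that $w_{G}$ on $B_{X}$ is genuinely the pulled-back topology rather than something strictly finer. The subtlety is precisely the failure of $w_{G}$ to be Hausdorff (points differing by a $g$-action are topologically indistinguishable), which must be shown to be exactly the indeterminacy collapsed by $S_{G}$; once the open sets are matched on both sides, the compactness transfer and the invocation of the two Kakutani theorems are routine.
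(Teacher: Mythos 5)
Your proposal is correct and follows essentially the same route as the paper: both reduce the statement via Theorem \ref{Kakutani} and the classical Kakutani theorem to the equivalence ``$B_{X}$ is $w_{G}$-compact iff $B_{X_{G}}$ is weakly compact,'' and both transfer compactness through the symmetrization map. Your second paragraph, identifying $w_{G}|_{B_{X}}$ as the initial topology induced by $S_{G}$, in fact makes explicit the saturation property that the paper's cover-transfer argument uses implicitly when it claims the sets $U_{i}=\{\overline{x}: x\in V_{i}\}$ are open and that the corresponding $V_{i}$ still cover $B_{X}$.
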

\begin{proof}
    By Theorem \ref{Kakutani} it is enough to show that $B_X$ is $w_G(X,X^*)$-compact if, and only if, $B_{X_G}$ is  $w(X_G,X_G^*)$-compact.

    Suppose that the unit ball $B_{X_{G}}$ is $w(X_G,X_G^*)$-compact, and let us show that $B_{X}$ is $w_G(X,X^*)$-compact. For this, consider $\{V_i\}_{i\in I}$ an open covering of $B_{X}$ in $w_G(X,X^*)$. Then, for each $V_i$ we can consider the set
    \[
    U_i:=\{\overline x:x\in V_i\}.
    \]

    Note that since $V_i$ is an open set with respect to the $w_G(X,X^*)$ topology on $B_X$, we have that $U_i$ is an open set with respect to the $w(X_G,X_G^*)$ topology on $B_{X_G}$. By the $w(X_G,X_G^*)$-compacity of $B_{X_G}$ we can find a finite subcover of  $\{U_i\}_{i\in I}$ that using a abuse of notation we denote by  $\{U_i\}_{i=1}^n$. Then, the associated sets $\{V_i\}_{i=1}^n$ are a finite subcover of $B_{X}$ in $w_G(X,X^*)$. Hence $B_{X}$ is $w_G(X,X^*)$-compact.

    The other implication follows similarly.
    
\end{proof}

Recall that Falcó proved in \cite[Theorem 6]{Falco} the group invariant version of James' theorem for the space $X_{G}$. So, in view of Theorem \ref{Coimplicacio G-reflexivitat} and that result, we obtain this alternative version of $G$-invariant James' theorem.
\begin{Theorem}[James]\label{Teorema de James}
    Let $X$ be a Banach space and $G$ a compact topological group acting on $X$. Then $X$ is $G$-reflexive if, and only if, every $G$-invariant functional is norm-attaining.
\end{Theorem}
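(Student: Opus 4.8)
The plan is to combine Theorem~\ref{Coimplicacio G-reflexivitat} with the already-known $G$-invariant James' theorem for the subspace $X_G$, namely \cite[Theorem 6]{Falco}. That cited result should read: $X_G$ is reflexive if, and only if, every $f \in X_G^*$ attains its norm on $X_G$. Since Theorem~\ref{Coimplicacio G-reflexivitat} tells us that $X$ is $G$-reflexive precisely when $X_G$ is reflexive, the whole argument reduces to translating the norm-attainment condition back and forth between the statement \emph{every $G$-invariant functional is norm-attaining} (the assertion here, where functionals are viewed on all of $X$) and the statement \emph{every functional in $X_G^*$ attains its norm on $X_G$} (the form used in \cite{Falco}).

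\medskip

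\noindent So the first step is to invoke Theorem~\ref{Coimplicacio G-reflexivitat} to replace ``$X$ is $G$-reflexive'' by ``$X_G$ is reflexive.'' The second step is to apply \cite[Theorem 6]{Falco} to replace ``$X_G$ is reflexive'' by ``every functional in $X_G^*$ is norm-attaining on $X_G$.'' The only genuine content left is the third step: verifying that a $G$-invariant functional $f \in X_G^*$ attains its norm \emph{as a functional on $X$} if and only if it attains its norm \emph{as a functional on $X_G$}. Here I would use the symmetrization map and the key identity $\langle f, x\rangle = \langle f, \overline{x}\rangle$ for $f \in X_G^*$ (the relation noted just after the Property following Proposition~\ref{weaker-topologies}), together with the fact that symmetrization is norm non-increasing because the elements of $G$ are isometries and $\mu$ is a probability measure, so $\|\overline{x}\| \le \int_G \|g(x)\|\, d\mu(g) = \|x\|$.

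\medskip

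\noindent I expect the norm-attainment translation to be the main (and essentially the only) obstacle, and it is routine once set up. In the easy direction, if $f$ attains its norm on $X_G \subseteq X$, then it attains it on $X$ as well, since the supremum of $|\langle f,\cdot\rangle|$ over $S_X$ equals the supremum over $S_{X_G}$: indeed for any $x \in S_X$ we have $\langle f, x\rangle = \langle f, \overline{x}\rangle$ with $\|\overline{x}\| \le 1$, so the restriction of $f$ to $B_{X_G}$ already realizes the full norm $\|f\|_{X^*} = \|f|_{X_G}\|_{X_G^*}$. In the converse direction, if $f$ attains its norm at some $x_0 \in S_X$, then $\overline{x_0} \in X_G$ satisfies $\langle f, \overline{x_0}\rangle = \langle f, x_0\rangle = \|f\|$ and $\|\overline{x_0}\| \le 1$; since $\langle f,\overline{x_0}\rangle = \|f\|$ forces $\|\overline{x_0}\| \ge 1$, we get $\|\overline{x_0}\| = 1$, so $f$ attains its norm at the $G$-invariant point $\overline{x_0}$. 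This shows the two notions of norm attainment coincide for $G$-invariant functionals, and chaining the three steps completes the proof.
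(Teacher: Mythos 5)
Your proposal is correct and follows essentially the same route as the paper, which derives the result by combining Theorem~\ref{Coimplicacio G-reflexivitat} with \cite[Theorem 6]{Falco} and leaves the translation of norm-attainment between $X$ and $X_G$ implicit. Your explicit verification of that translation step, via $\langle f, x\rangle = \langle f, \overline{x}\rangle$ and $\Vert \overline{x}\Vert \leq \Vert x \Vert$, is a correct and welcome addition of detail rather than a different argument.
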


\subsection{Moreau's maximum formula}

Let us start presenting some definitions.
\begin{Definition}
    Let $(X, \Vert \cdot \Vert)$ be a Banach space and $u$ an element of $S_{X}$. We define the $G$-invariant duality mapping as follows
    \[
    J_{G}(u) = \left\{f \in X^{*}_{G} ~ | ~ \Vert f \Vert = \Vert u \Vert \hbox{ and } \langle f, u \rangle = \Vert u \Vert^{2}\right\}.
    \]
\end{Definition}

Note that Theorem \ref{Teorema d'assolir la norma de Hahn-Banach} ensures that $J_{G}(u)\ne \emptyset$ for every $u\in \S_{X_G}$.

\begin{Definition}
    Let $(X, \Vert \cdot \Vert)$ be a Banach space and $u$ an element of $S_{X_G}$. We say that a point $x \in X$ is $G$-dissipative if $\langle x^{*}, x \rangle \leq 0$ for all $x^{*} \in J_{G}(u)$. We will denote the set of all $G$-dissipative elements as follows
    \[
    Dis_{G}(X) = \left\{x \in X ~ | ~ \langle x^{*}, x \rangle \leq 0 \hspace{0.2cm} \forall \, x^{*} \in J_{G}(u) \right\}.
    \]
\end{Definition}

\begin{Definition}
    Let $(X, \Vert \cdot \Vert)$ be a Banach space, we say that $\Vert \cdot \Vert$ is $G$-strongly subdifferentiable ($G$-SSD) at $u \in S_{X}$ if the limit
    \[
    \tau(u, x) = \lim_{t \to 0^{+}}\frac{\Vert u +tx \Vert - 1}{t}
    \]
    exists uniformly for all $x \in B_{X_G}$. If this happens for all $u \in S_{X_G}$, we say that $X$ is $G$-strongly subdifferentiable.
\end{Definition}

From these definitions, we can deduce the following properties.
\begin{Proposition}
    Let $X$ be a Banach space and $G$ a compact topological group acting on $X$. Then
    \begin{enumerate}
        \item $J_{G}(u)$ is $G^{*}$-invariant, that is, $g^*(J_G(u)))=J_G(u)$ for all $g\in G$ and all $u\in S_{X_G}$.
        \item $\text{Dis}_{G}(X)$ is $G$-invariant, that is, $g(\text{Dis}_{G}(X)))=\text{Dis}_{G}(X)$ for all $g\in G$.
    \end{enumerate}
\end{Proposition}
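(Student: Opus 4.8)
The plan is to verify the two claimed invariance properties directly from the definitions, exploiting the isometric action of $G$ and the $G^*$-invariance already known about the duality mapping.

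\medskip

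\textbf{Part (1): $J_G(u)$ is $G^*$-invariant.} First I would fix $u \in S_{X_G}$ and $g \in G$, and take any $f \in J_G(u)$. The goal is to show $g^*(f) \in J_G(u)$, which by definition requires three things: that $g^*(f)$ is $G$-invariant (i.e. lies in $X_G^*$), that $\Vert g^*(f)\Vert = \Vert u\Vert$, and that $\langle g^*(f), u\rangle = \Vert u\Vert^2$. The norm condition is immediate because each $g \in G$ is an isometric isomorphism, so $g^*$ is an isometry on $X^*$ and $\Vert g^*(f)\Vert = \Vert f\Vert = \Vert u\Vert$. For the pairing, I would compute
\[
\langle g^*(f), u\rangle = \langle f, g(u)\rangle = \langle f, u\rangle = \Vert u\Vert^2,
\]
using that $u \in X_G$ is $G$-invariant so $g(u) = u$. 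The only point needing a short argument is that $g^*(f)$ is again $G$-invariant; since $f \in X_G^*$ is itself $G$-invariant, $g^*(f) = f$ in fact, so there is nothing to check and one even gets $g^*(J_G(u)) = J_G(u)$ trivially. (If instead one reads $J_G(u) \subseteq X_G^*$ literally, the set-level equality $g^*(J_G(u)) = J_G(u)$ follows because $g^*$ restricted to $X_G^*$ is the identity.)

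\medskip

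\textbf{Part (2): $\mathrm{Dis}_G(X)$ is $G$-invariant.} Here I would fix $g \in G$ and $x \in \mathrm{Dis}_G(X)$ and show $g(x) \in \mathrm{Dis}_G(X)$; applying this to $g^{-1}$ then yields the reverse inclusion and hence the equality $g(\mathrm{Dis}_G(X)) = \mathrm{Dis}_G(X)$. Take any $x^* \in J_G(u)$. Since $x^*$ is $G$-invariant, $(g^{-1})^*(x^*) = x^*$, equivalently $\langle x^*, g(x)\rangle = \langle (g^*)(x^*)\text{-type expression}\rangle$; more directly,
\[
\langle x^*, g(x)\rangle = \langle g^*(x^*), x\rangle = \langle x^*, x\rangle \le 0,
\]
where the middle equality is the definition of the adjoint and the second uses $G$-invariance of $x^*$ (so $g^*(x^*) = x^*$). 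Since $x^* \in J_G(u)$ was arbitrary, $g(x)$ is $G$-dissipative.

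\medskip

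\textbf{Main obstacle.} There is no serious technical obstacle: both properties reduce to the observation that elements of $X_G^*$ are fixed by every $g^*$ together with the isometry of the group action. The only care needed is bookkeeping with the adjoint pairing $\langle g^*(f), x\rangle = \langle f, g(x)\rangle$ and keeping straight that $u$ lives in $X_G$ (so $g(u)=u$) while the testing functionals live in $X_G^*$ (so $g^*(x^*)=x^*$). The cleanest presentation proves each inclusion using $g$ and then invokes $g^{-1} \in G$ to upgrade to equality of sets.
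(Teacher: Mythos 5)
Your proof is correct and follows essentially the same route as the paper: both parts reduce to the adjoint identity $\langle g^{*}(f), x \rangle = \langle f, g(x) \rangle$ combined with the $G$-invariance of the functionals (and of $u$), with the reverse inclusion obtained by applying the same argument to $g^{-1}$. Your extra observation that $g^{*}$ fixes every element of $X_{G}^{*}$ pointwise --- so that part (1) is in fact an immediate equality and the $G$-invariance of $g^{*}(f)$ needs no separate check --- is correct and slightly sharper than what the paper records, but it is not a genuinely different argument.
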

\begin{proof}

\

    \begin{enumerate}
        \item Fix $f \in J_{G}(u)$ and $g\in G$. Then, since $f$ is $G$-invariant,
        \[
        \langle g^{*}(f), u \rangle = \langle f, g(u) \rangle = \langle f, u \rangle = \Vert u \Vert^{2}.
        \]
        Also,\[
        \Vert g^{*}f \Vert = \Vert f \Vert = \Vert u \Vert.
        \]
        And it is clear that $g^{*}\left(J_{G}(u)\right) \subseteq J_{G}(u)$ for every $g^{*} \in G^{*}$. Applying inverse mappings we obtain the other inclusion.

        \item Let $x \in \text{Dis}_{G}(X)$, it is clear that
        \[
        \langle x^{*}, g(x) \rangle = \langle x^{*}, x \rangle \leq 0 \quad \forall \, x^{*} \in J_{G}(u).
        \]
        Therefore, $g(Dis_{G}(X)) \subseteq Dis_{G}(X)$ for all $g \in G$. Applying inverse mappings again, we obtain the other inclusion.
    \end{enumerate}
\end{proof}

Let us recall the definition of the right directional derivative of an operator.

For two Banach spaces  $X, Y$, and  a subset $U \subseteq X$, the right directional derivative of $f \colon U \to Y$ at $x_{0} \in \text{int}(U)$ is defined as follows
\[
d^{+}f(x_{0})(x) = \lim_{t \to 0^{+}}\frac{f(x_{0} + tx) - f(x_{0})}{t}.
\]

Recall that for a function $f$ we have
\[
\partial f(x_0)=\{h\in X^*~~ | ~~ \langle h,x-x_0\rangle \leq f(x) -f(x_0), \, \forall \, x\in X\}.
\]
Following the same ideas than before, for a compact group $G$, and a $G$-invariant function $f$, we can define 
\[
\partial_G f(x_0)=\{h\in X_G^*~~ | ~~ \langle h,x-x_0\rangle \leq f(x) -f(x_0), \, \forall \, x\in X\}.
\]

It is obvious that if $f$ is $G$-convex and continuous
\[
\partial_G f(x_0)\subseteq \partial_G f(\overline{x_0}).
\]

 Notice also that, if $f$ is $G$-linear and continuous
\[
\partial_G f(x_0)= \partial_G f(\overline{x_0}),
\]
and if in addition $x_0$ is $G$ invariant, then
        \begin{align*}
        \partial_{G}f(x_{0}) &= \left\{x^{*} \in X^{*}_{G} ~ |~ \langle x^{*}, x - x_{0} \rangle \leq f(x) - f(x_{0} ) \hspace{0.2cm} \forall \, x \in X\right\}\\
        &= \left\{x^{*} \in X^{*}_{G} ~ |~ \langle x^{*}, \overline{x} - x_{0} \rangle \leq  f(\overline{x} ) - f(x_{0}) \hspace{0.2cm} \forall \, x \in X\right\}\\
        &= \left\{x^{*} \in X^{*}_{G} ~ |~ \langle x^{*}, x - x_{0} \rangle \leq f(x) - f(x_{0}) \hspace{0.2cm} \forall \, x \in X_G\right\}
        \end{align*}

To continue, let us provide some sufficient conditions to ensure that the right directional derivative is group invariant.
\begin{Property}
Let $X, Y$ be two Banach spaces, $U \subseteq X$ a subset, $G$ a compact topological group acting on $X$, and $f \colon U \to Y$ a mapping. Let $x_{0} \in \text{int}(U)\cap X_G$, and let $f$ be $G$-invariant, then so is $d^{+}f(x_{0})(x)$.
\end{Property}
\begin{proof}
Let $g \in G$:
\begin{align*}
d^{+}f(x_{0})(g(x)) = & \lim_{t \to 0^{+}}\frac{f(x_{0} + tg(x)) - f(x_{0})}{t} = \\ = & \lim_{t \to 0^{+}}\frac{f(g(x_{0} + tx)) - f(x_{0})}{t} = \\
 = & \lim_{t \to 0^{+}}\frac{f(x_{0} + tx) - f(x_{0})}{t} = \\ = & d^{+}f(x_{0})(x),
\end{align*}
where we have used first the $G$-invariance of $x_{0}$ and then the $G$-invariance of $f$.
\end{proof}



Now we would like to recall this well-known proposition, whose proof can be found in \cite[Proposition 1.8]{Phelps}. 
\begin{Proposition}\label{Caracteritzacio subdiferencial}
Let $X$ be a Banach space, and $f \colon X \to \mathbb{R}\cup\left\{+\infty\right\}$ be a proper and convex function. For $x_{0} \in \text{int}(\text{Dom}(f))$ we have that
\[
x^{*} \in \partial f(x_{0}) \Leftrightarrow \langle x^{*}, x \rangle \leq d^{+}f(x_{0})(x) \quad \forall \, x \in X.
\]
\end{Proposition}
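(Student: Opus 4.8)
The plan is to prove the classical characterization of the subdifferential via the right directional derivative, which is Proposition \ref{Caracteritzacio subdiferencial}. The key observation is that for a convex function $f$, the difference quotient $t \mapsto \frac{f(x_0+tx)-f(x_0)}{t}$ is monotone nondecreasing in $t>0$, so its infimum as $t\to 0^+$ equals the limit $d^+f(x_0)(x)$, and in particular $d^+f(x_0)(x)\le \frac{f(x_0+tx)-f(x_0)}{t}$ for every $t>0$.

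For the forward implication, I would take $x^*\in\partial f(x_0)$, so that $\langle x^*, y-x_0\rangle \le f(y)-f(x_0)$ for all $y\in X$. Applying this with $y = x_0+tx$ for fixed $x\in X$ and $t>0$ gives $\langle x^*, tx\rangle \le f(x_0+tx)-f(x_0)$, hence $\langle x^*, x\rangle \le \frac{f(x_0+tx)-f(x_0)}{t}$. Letting $t\to 0^+$ yields $\langle x^*, x\rangle \le d^+f(x_0)(x)$, as desired. This direction only uses the subgradient inequality and the existence of the directional derivative (guaranteed by convexity and $x_0\in\operatorname{int}(\operatorname{Dom}(f))$).

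For the reverse implication, I would assume $\langle x^*, x\rangle \le d^+f(x_0)(x)$ for all $x\in X$. Fix an arbitrary $y\in X$ and set $x = y-x_0$. Using the monotonicity of the difference quotient evaluated at $t=1$, namely $d^+f(x_0)(y-x_0)\le f(x_0+(y-x_0))-f(x_0) = f(y)-f(x_0)$, I combine this with the hypothesis to obtain
\[
\langle x^*, y-x_0\rangle \le d^+f(x_0)(y-x_0) \le f(y)-f(x_0).
\]
Since $y\in X$ was arbitrary, this is exactly the statement that $x^*\in\partial f(x_0)$.

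The main technical point—and the only genuine obstacle—is justifying that the directional derivative exists and that the difference quotient is monotone, which rests on convexity together with the hypothesis $x_0\in\operatorname{int}(\operatorname{Dom}(f))$ ensuring $x_0+tx\in\operatorname{Dom}(f)$ for small $t>0$; this is standard for proper convex functions and is precisely the content cited from \cite[Proposition 1.8]{Phelps}. Everything else is a direct manipulation of the two defining inequalities, so no deep machinery beyond convexity is needed.
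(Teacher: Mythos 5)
Your proof is correct and is the standard argument: monotonicity of the convex difference quotient gives both that $d^{+}f(x_{0})(x)=\inf_{t>0}\frac{f(x_{0}+tx)-f(x_{0})}{t}$ (used with $t\to 0^{+}$ for the forward implication) and that this infimum is bounded above by the $t=1$ quotient (used for the reverse implication). The paper does not prove this proposition itself but simply cites \cite[Proposition 1.8]{Phelps}; your argument is essentially the proof given there, so there is nothing to compare beyond noting that you have correctly supplied the omitted details.
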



A direct application of Moreau's theorem to the Banach space $X_G$ gives us the following result.
\begin{Proposition}[Moreau G-invariant]\label{Formula del maxim de Moreau}
Let $X$ be a Banach space and $G$ a compact topological group acting on $X$. Let $f \colon X \to \mathbb{R}\cup\left\{+\infty\right\}$ be a proper, convex, $G$-invariant function that is continuous at $x_{0} \in \text{Dom}(f)\cap X_G$. Then,
\[
d^{+}f(x_{0})(x) = \sup\left\{\langle x^{*}, x \rangle ~~ | ~~ x^{*} \in \partial_G f(x_{0}) \right\} \quad \forall \, x \in X_G.
\]
Moreover, this supremum is attained at some point $x^{*} \in \partial_G f(x_{0})$.
\end{Proposition}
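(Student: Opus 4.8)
The plan is to reduce everything to the classical Moreau maximum formula applied to the restriction of $f$ to the closed subspace $X_G$, and then to transport the conclusion back to $X$ through the symmetrization. First I would set $\tilde f:=f|_{X_G}$. Since the elements of $G$ are isometries, $X_G=\bigcap_{g\in G}\ker(g-\mathrm{Id})$ is a closed subspace of $X$, hence a Banach space on its own. As the restriction of a proper convex map, $\tilde f$ is proper and convex, and the continuity of $f$ at $x_0$ in $X$ yields continuity of $\tilde f$ at $x_0\in\text{Dom}(\tilde f)$ relative to $X_G$ (so $x_0$ lies in the interior of $\text{Dom}(\tilde f)$ in $X_G$). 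The classical Moreau formula applied on $X_G$ (cf. \cite{Phelps}) then gives, for every $x\in X_G$,
\[
d^{+}\tilde f(x_0)(x)=\max\{\langle\phi,x\rangle : \phi\in\partial\tilde f(x_0)\},
\]
where $\partial\tilde f(x_0)\subseteq (X_G)^{*}$ and the maximum is attained.

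The key step is to identify $\partial_G f(x_0)$ with $\partial\tilde f(x_0)$. Consider the restriction map $R\colon X_G^{*}\to (X_G)^{*}$, $x^{*}\mapsto x^{*}|_{X_G}$. Because every $G$-invariant functional satisfies $\langle x^{*},x\rangle=\langle x^{*},\overline{x}\rangle$ with $\overline{x}\in X_G$, the map $R$ is injective, and the $G$-invariant Hahn–Banach theorem (\cite[Proposition 1]{Falco}) shows it is onto; thus $R$ is a bijection. I claim it carries $\partial_G f(x_0)$ onto $\partial\tilde f(x_0)$. For this I would show that, for $x^{*}\in X_G^{*}$, the inequality $\langle x^{*},x-x_0\rangle\le f(x)-f(x_0)$ holds for all $x\in X$ iff it holds for all $x\in X_G$. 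One direction is trivial; for the other, given $x\in X$ put $y=\overline{x}\in X_G$, so that $\langle x^{*},x-x_0\rangle=\langle x^{*},y-x_0\rangle\le f(y)-f(x_0)=f(\overline{x})-f(x_0)\le f(x)-f(x_0)$, the last inequality being the Jensen-type bound $f(\overline{x})\le\int_G f(g(x))\,d\mu(g)=f(x)$ furnished by the convexity and $G$-invariance of $f$. This equivalence says exactly that $x^{*}\in\partial_G f(x_0)$ iff $R(x^{*})\in\partial\tilde f(x_0)$, so $R$ restricts to the desired bijection.

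It then remains to assemble the pieces. For $x\in X_G$ one has $x_0+tx\in X_G$ for every $t$, whence $d^{+}f(x_0)(x)=d^{+}\tilde f(x_0)(x)$; moreover $\langle x^{*},x\rangle=\langle R(x^{*}),x\rangle$ for $x\in X_G$, so that
\[
\sup\{\langle x^{*},x\rangle : x^{*}\in\partial_G f(x_0)\}=\max\{\langle\phi,x\rangle : \phi\in\partial\tilde f(x_0)\}=d^{+}\tilde f(x_0)(x)=d^{+}f(x_0)(x),
\]
and the attainment transports back through $R^{-1}$ from the classical statement, giving the "moreover" part with an $x^{*}\in\partial_G f(x_0)$ realizing the supremum.

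The main obstacle is precisely the reduction, in the second paragraph, of the defining inequality of $\partial_G f(x_0)$ from all of $X$ to $X_G$: this is where the hypotheses are genuinely used, via the identity $\langle x^{*},x\rangle=\langle x^{*},\overline{x}\rangle$ for $G$-invariant $x^{*}$ together with the Jensen estimate $f(\overline{x})\le f(x)$. The remark preceding the statement established the analogous reduction only under the stronger hypothesis that $f$ is $G$-linear; here I must replace that exact equality by the convexity inequality, which is the one nonroutine point of the argument.
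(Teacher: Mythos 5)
Your argument follows the same route as the paper's: apply the classical Moreau maximum formula on the Banach space $X_G$ and identify $\partial_G f(x_0)$ with $\partial\bigl(f|_{X_G}\bigr)(x_0)$ via the $G$-invariant Hahn--Banach extension theorem. You additionally make explicit the one step the paper leaves to the reader --- that the subdifferential inequality propagates from $X_G$ back to all of $X$ through the Jensen-type bound $f(\overline{x})\le f(x)$ --- and this is correct.
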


The proof of this result is based on the fact that \[
\sup\left\{\langle x^{*}, x \rangle ~~ | ~~ x^{*} \in \partial f\vert_{X_G}(x_{0}) \right\}=\sup\left\{\langle x^{*}, x \rangle ~~ | ~~ x^{*} \in \partial_G f(x_{0}) \right\}
\]
which can be easily obtained by using the $G$-invariant Hahn-Banach extension theorem.

Note that, in general, the previous result cannot be improved in the sense that we cannot remove the condition of $x_0\in X_G$ 
as can be seen with the following example.

\begin{Example}
Let $G = \left\{Id, -Id\right\} \subseteq \mathbb{R}$, $f \colon \mathbb{R} \to \mathbb{R}$ and defined by $f(x) = |x|$. 
If $x_{0} = 0$, observe that $x_{0}$ is $G$-invariant, in fact, it is the only $G$-invariant point. On the one hand, notice that
\[
d^{+}f(0)(1) = \lim_{h \to 0^{+}} \frac{f(0+h) - f(0)}{h} = \lim_{h \to 0^{+}}\frac{|h|}{h} = 1.
\]
On the other hand, observe that
\[
\partial_{G}|\cdot|(0) = \left\{h \in \mathbb{R}_{G} ~ | ~ h(x) \leq |x|, \, \forall \, x \in \mathbb{R}\right\} = \left\{0\right\}.
\]
Thus,
\[
\sup\left\{h(x) ~ | ~ h \in \partial_{G}|\cdot|(0)\right\} = 0.
\]
Therefore, in general, the $G$-invariant Moreau's maximum formula is not true if we drop the condition of $x \in X_{G}$.
\end{Example}

\begin{Corollary}\label{Corolari formula del maxim de Moreau}
Let $X$ be a Banach space, $G$ a compact topological group acting on $X$. Then, for all $x_{0} \in X_{G}$ and for all $x \in X_G$, there exists a functional $x^{*} \in \partial_G \Vert x_{0}\Vert$ such that
\[
d^{+}\Vert x_{0}\Vert(x) = \lim_{t \to 0^{+}}\frac{\Vert x_{0} + tx \Vert - \Vert x_{0} \Vert}{t} = \langle x^{*}, x \rangle.
\]
\end{Corollary}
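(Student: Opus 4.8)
The plan is to apply the $G$-invariant Moreau maximum formula (Proposition \ref{Formula del maxim de Moreau}) directly to the norm function $f = \Vert \cdot \Vert$. First I would verify that $f$ satisfies the four hypotheses of that proposition. Properness and finiteness are immediate, since $\Vert x \Vert < +\infty$ for every $x \in X$, so $\mathrm{Dom}(f) = X$. Convexity follows from the triangle inequality together with positive homogeneity of the norm. Continuity at $x_{0}$ is clear because the norm is $1$-Lipschitz, hence continuous on all of $X$. The key hypothesis is $G$-invariance: since every $g \in G$ is an isometric isomorphism of $X$, we have $\Vert g(x) \Vert = \Vert x \Vert$ for all $x$, so $f(g(x)) = f(x)$, i.e., $f$ is $G$-invariant. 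Finally, the requirement $x_{0} \in \mathrm{Dom}(f) \cap X_{G}$ reduces to $x_{0} \in X_{G}$, which is part of the hypothesis.

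Once the hypotheses are checked, Proposition \ref{Formula del maxim de Moreau} applied to $f = \Vert \cdot \Vert$ yields, for every $x \in X_{G}$,
\[
d^{+}\Vert x_{0} \Vert(x) = \sup\left\{\langle x^{*}, x \rangle ~~|~~ x^{*} \in \partial_{G} \Vert x_{0} \Vert \right\},
\]
where I write $\partial_{G} \Vert x_{0} \Vert$ for $\partial_{G} f(x_{0})$ and $d^{+}\Vert x_{0} \Vert(x)$ for $d^{+}f(x_{0})(x)$. The crucial second assertion of that proposition is that this supremum is \emph{attained}: there exists some $x^{*} \in \partial_{G} \Vert x_{0} \Vert$ realizing it. Combining the two gives $d^{+}\Vert x_{0} \Vert(x) = \langle x^{*}, x \rangle$ for that $x^{*}$, which is exactly the claim, and the middle equality in the statement is simply the definition of $d^{+}\Vert x_{0}\Vert(x)$.

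Since this is a direct specialization, I do not anticipate any genuine obstacle; the only point requiring care is to note that the attaining functional $x^{*}$ may depend on the chosen direction $x \in X_{G}$, which is consistent with the quantifier structure of the statement (``for all $x$, there exists $x^{*}$''). The entire content of the corollary therefore rests on the attainment clause of the $G$-invariant Moreau formula, which itself is obtained by applying the classical Moreau theorem on the subspace $X_{G}$ together with the $G$-invariant Hahn-Banach extension theorem.
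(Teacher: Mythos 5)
Your proposal is correct and matches the paper's treatment: the corollary is presented there as an immediate consequence of Proposition \ref{Formula del maxim de Moreau} applied to $f = \Vert\cdot\Vert$, whose hypotheses (properness, convexity, continuity, $G$-invariance via the standing assumption that $G$ consists of isometries, and $x_{0}\in X_{G}$) are exactly the ones you verify. The existence of the attaining functional is, as you note, precisely the ``moreover'' clause of that proposition, so nothing further is needed.
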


\begin{Corollary}
Let $X$ be a Banach space, $G$ a compact topological group acting on $X$ and $f \colon X \to Y$ be a $G$-invariant and a Gâteaux differentiable mapping. Then, for all $x_{0}, x \in X_G$ the function
\[
\begin{array}{cccl}
    \varphi \colon & [0, 1] & \to & \mathbb{R} \\
     & t & \mapsto & \Vert f(x_{0} + tx) \Vert
\end{array}
\]
admits a right derivative at every point, and there exists a $G$-invariant functional $x^{*} \in \partial\Vert \cdot \Vert (f(x_{0} + tx))$ such that
\[
\lim_{h \to 0^{+}}\frac{\varphi(t + h) - \varphi(t)}{h} = \langle x^{*}, df(x_{0} + tx)(x) \rangle.
\]
\end{Corollary}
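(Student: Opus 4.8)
The plan is to realize $\varphi$ as the composition of the norm of $Y$ with the differentiable curve $\gamma(t) := f(x_{0}+tx)$ in $Y$, and then to reduce the statement to the $G$-invariant Moreau maximum formula already proved in Proposition~\ref{Formula del maxim de Moreau} and Corollary~\ref{Corolari formula del maxim de Moreau}.

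First I would check that $\gamma$ is differentiable with $\gamma'(t) = df(x_{0}+tx)(x)$. Writing $p = x_{0}+tx$, the map $h \mapsto x_{0} + (t+h)x = p + hx$ is affine, so by the definition of the Gâteaux derivative the quotient $\tfrac{1}{h}\bigl(f(p+hx)-f(p)\bigr)$ converges to $df(p)(x)$ as $h \to 0$; this is exactly $\gamma'(t)$.

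Next I would establish the identity $\lim_{h\to 0^{+}}\tfrac{\varphi(t+h)-\varphi(t)}{h} = d^{+}\Vert\cdot\Vert(\gamma(t))(\gamma'(t))$, that is, that the right derivative of $\varphi$ is the right directional derivative of the norm of $Y$ at $\gamma(t)$ in the direction $\gamma'(t)$. To do this, write $\gamma(t+h) = \gamma(t) + h\gamma'(t) + r(h)$ with $\Vert r(h)\Vert = o(h)$, and use that the norm is $1$-Lipschitz to obtain
\[
\left|\,\Vert \gamma(t)+h\gamma'(t)+r(h)\Vert - \Vert \gamma(t)+h\gamma'(t)\Vert\,\right| \le \Vert r(h)\Vert = o(h),
\]
so that $\tfrac{\varphi(t+h)-\varphi(t)}{h} = \tfrac{\Vert \gamma(t)+h\gamma'(t)\Vert-\Vert\gamma(t)\Vert}{h}+o(1)$. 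Since the norm is convex, the right-hand quotient converges as $h\to 0^{+}$ to $d^{+}\Vert\cdot\Vert(\gamma(t))(\gamma'(t))$; in particular the right derivative of $\varphi$ exists and equals this directional derivative.

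Finally I would apply the $G$-invariant Moreau maximum formula of Corollary~\ref{Corolari formula del maxim de Moreau} to the norm of $Y$ at the point $\gamma(t)$ in the direction $\gamma'(t)$, obtaining a $G$-invariant functional $x^{*} \in \partial\Vert\cdot\Vert(\gamma(t))$ with $d^{+}\Vert\cdot\Vert(\gamma(t))(\gamma'(t)) = \langle x^{*}, \gamma'(t)\rangle = \langle x^{*}, df(x_{0}+tx)(x)\rangle$; combined with the previous step this is the desired conclusion. The main obstacle is the $G$-invariance bookkeeping needed to legitimately invoke the $G$-invariant, rather than the classical, Moreau formula: one must control how the base point $\gamma(t)=f(x_{0}+tx)$ and the direction $\gamma'(t)=df(x_{0}+tx)(x)$ behave under the group. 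This is where the hypotheses enter decisively. Since $x_{0},x\in X_{G}$ and $X_{G}$ is a linear subspace, $p=x_{0}+tx\in X_{G}$; then, because $f$ is $G$-invariant and $p$ is $G$-invariant, the Property on the right directional derivative shows that the map $x\mapsto df(p)(x)$ is $G$-invariant, which is precisely what is required for Corollary~\ref{Corolari formula del maxim de Moreau} to produce a $G$-invariant subgradient and to guarantee that $\langle x^{*}, df(p)(\cdot)\rangle$ is itself a $G$-invariant functional on $X$. Verifying this compatibility carefully, and in particular confirming that the data fed to the $G$-invariant Moreau formula satisfy its $G$-invariance hypotheses, is the delicate point of the argument.
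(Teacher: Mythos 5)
Your proposal is correct and follows essentially the same route as the paper: both replace $f(x_{0}+(t+h)x)$ by its first-order approximation $f(x_{0}+tx)+h\,df(x_{0}+tx)(x)$ using the $1$-Lipschitz property of the norm, and then invoke Corollary~\ref{Corolari formula del maxim de Moreau} to produce the $G$-invariant subgradient. The $G$-invariance bookkeeping you flag as delicate (in particular, what group action on $Y$ makes $\gamma(t)$ and $\gamma'(t)$ admissible inputs to the $G$-invariant Moreau formula) is treated no more explicitly in the paper's own proof than in yours.
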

\begin{proof}
Fix $x_{0},x \in X_G$ and $0 < t < 1$. Take $h > 0$ such that $t + h \in [0, 1]$ and observe that
\[
\frac{\varphi(t + h) - \varphi(t)}{h} = \frac{\Vert f(x_{0} + (t + h)x) \Vert - \Vert f(x_{0} + tx) \Vert}{h}.
\]
Define now the function
\[
\begin{array}{cccl}
    \phi \colon & [0, 1] & \to & Y \\
     & h & \mapsto & \frac{f(x_{0} + (t + h)x) - f(x_{0} + tx)}{h},
\end{array}
\]
which is well-defined, and by being $f$ differentiable Gâteaux at $x_{0} + tx$:
\[
\lim_{h \to 0^{+}}\phi(h) = \partial f(x_{0} + tx)(x) = \phi(0).
\]
This later meaning that, for all $\epsilon > 0$, there exists $\delta > 0$ such that
\[
\Vert \phi(h) - \phi(0) \Vert < \frac{\epsilon}{\delta} \quad \forall \, |h| < \delta,
\]
in particular
\[
\Vert f(x_{0} + tx) + h\phi(h) \Vert - \Vert f(x_{0} + tx) + h \phi(0) \Vert \leq h\Vert \phi(h) - \phi(0) \Vert < \epsilon \quad \forall \, |h| < \delta.
\]
Therefore
\begin{align*}
\lim_{h \to o^{+}}\frac{\varphi(t + h) - \varphi(t)}{h} = & \lim_{h \to 0^{+}}\frac{\Vert f(x_{0} + tx) + h\phi(h) \Vert - \Vert f(x_{0} + tx) \Vert}{h} = \\
= & \lim_{h \to 0^{+}}\frac{\Vert f(x_{0} + tx) + h\phi(0) \Vert - \Vert f(x_{0} + tx) \Vert}{h}.
\end{align*}
Applying now Corollary \ref{Corolari formula del maxim de Moreau} this later limit exists, hence $\varphi$ admits a right directional derivative in every direction, and there exists a $G$-invariant functional $x^{*} \in \partial \Vert \cdot \Vert(f(x_{0} + tx))$ such that
\[
\lim_{h \to o^{+}}\frac{\varphi(t + h) - \varphi(t)}{h} = \langle x^{*}, d f(x_{0} + tx)(x) \rangle.
\]
\end{proof}

\subsection{Krein-Smulian theorem}
\label{Krein-Smulian-section}

We aim to present a $G$-invariant version of the Krein-Smulian theorem. To this end, we will introduce the following definitions and lemmas, which lay the groundwork for the result.

\begin{Definition}
    Let $X$ be a Banach space, $G$ a compact topological group acting on $X$, and $A$ a subset of $X$. We define the $G$-invariant polar set of $A$ as follows
    \[
    A^{\circ_{G}} = \left\{x^{*} \in X^{*}_{G} ~ | ~ |\langle x^{*}, x \rangle| \leq 1, \hspace{0.2cm} \forall \, x \in A\right\}.
    \]
\end{Definition}

We have the following properties.
\begin{Proposition}\label{Polar debil estrella tancat}
    Let $X$ be a Banach space, $G$ be a compact topological group acting on $X$ and $A$ be a subset of $X$. Then, the following holds
    \begin{enumerate}
        \item $A^{\circ_{G}}$ is $G^{*}$-invariant.
        \item If $A_G\ne\emptyset$, then $A^{\circ_{G}} \subseteq (A_{G})^{\circ} \subseteq A^{\circ}$.
        
        \item If $A$ is $G$-invariant, then $A^{\circ} \subseteq (\overline{A})^{\circ}$, where $\overline{A}=\{\overline{x} ~ | ~ x\in A\}$.
        
        \item $A^{\circ_{G}}$ is $w^{*}_{G}$-close in $X^{*}$.
    \end{enumerate}
\end{Proposition}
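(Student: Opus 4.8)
The plan is to verify each of the four properties directly from the definition of the $G$-invariant polar set, treating them in the order given since the earlier parts feed naturally into the understanding of the later ones.

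\medskip

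For part (1), I would fix $x^* \in A^{\circ_G}$ and $g \in G$, and show $g^*(x^*) \in A^{\circ_G}$. Since $x^* \in X_G^*$ is already $G$-invariant we have $g^*(x^*) = x^*$, so the membership is immediate; in fact $A^{\circ_G} \subseteq X_G^*$ by definition, and every element of $X_G^*$ is fixed by $G^*$, which is exactly $G^*$-invariance. This part is essentially definitional.

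\medskip

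For part (2), assuming $A_G \neq \emptyset$, I would prove the two inclusions separately. For $A^{\circ_G} \subseteq (A_G)^\circ$: take $x^* \in A^{\circ_G}$; then $x^* \in X_G^*$ and $|\langle x^*, x\rangle| \leq 1$ for all $x \in A$, hence in particular for all $x \in A_G \subseteq A$, so $x^* \in (A_G)^\circ$. For $(A_G)^\circ \subseteq A^\circ$, I would use the symmetrization: given $x^* \in (A_G)^\circ$ with $x^* \in X_G^*$, for any $x \in A$ one has $\langle x^*, x\rangle = \langle x^*, \overline{x}\rangle$ because $x^*$ is $G$-invariant (this is the averaging identity $\langle f, x\rangle = \langle f, \overline{x}\rangle$ noted after the Property on page one), and $\overline{x} \in A_G$ since $A$ is assumed to contain its symmetrizations here — the delicate point is whether $\overline{x} \in A_G$, which requires $A$ to be such that symmetrization lands back in $A_G$; I expect the intended reading is that $(A_G)^\circ$ is the ordinary polar taken inside $X_G^*$, so that the bound on $A_G$ transfers to all of $A$ via $\langle x^*, x\rangle = \langle x^*, \overline{x}\rangle$ with $\overline{x} \in \overline{A}$. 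This is the step I would scrutinize most carefully.

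\medskip

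For part (3), with $A$ now $G$-invariant, I would take $x^* \in A^\circ$ and show $|\langle x^*, \overline{x}\rangle| \leq 1$ for every $\overline{x} \in \overline{A}$. Writing $\overline{x} = \int_G g(x)\, d\mu(g)$ as a Bochner integral, linearity and continuity of $x^*$ give $\langle x^*, \overline{x}\rangle = \int_G \langle x^*, g(x)\rangle\, d\mu(g)$; since $A$ is $G$-invariant, each $g(x) \in A$, so $|\langle x^*, g(x)\rangle| \leq 1$, and because $\mu$ is a probability measure the averaged integral is bounded by $1$ as well, giving $x^* \in (\overline{A})^\circ$. The main obstacle here is simply justifying the interchange of the functional with the Bochner integral, which is standard.

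\medskip

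For part (4), to see that $A^{\circ_G}$ is $w^*_G$-closed in $X^*$, I would write it as an intersection of $w^*_G$-closed sets. For each fixed $x \in A$, the set $\{x^* \in X_G^* : |\langle x^*, x\rangle| \leq 1\}$ is a preimage of the closed disc under the evaluation map $x^* \mapsto \langle x^*, x\rangle$; I would first reduce to $x \in A_G$ (or to the symmetrizations $\overline{x}$), since evaluation at a $G$-invariant point is exactly a $w^*_G$-continuous functional by the definition of the $w^*_G$ topology, whereas evaluation at a general $x$ need not be $w^*_G$-continuous. Using the identity $\langle x^*, x\rangle = \langle x^*, \overline{x}\rangle$ for $x^* \in X_G^*$, the condition $|\langle x^*, x\rangle| \leq 1$ is equivalent to $|\langle x^*, \overline{x}\rangle| \leq 1$, and $\overline{x} \in X_G$, so evaluation is $w^*_G$-continuous and each defining set is closed. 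The whole polar is then the intersection over $x \in A$ of these closed sets and hence $w^*_G$-closed. I expect the main subtlety across the proposition to be the consistent use of the symmetrization identity to pass between arbitrary points of $A$ and $G$-invariant points, which is what makes the $w^*_G$-topology interact correctly with the polar.
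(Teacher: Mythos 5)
Your treatment of (1) and (3) matches the paper's proof essentially verbatim. For (1) the paper computes $|\langle g^{*}(x^{*}), x\rangle| = |\langle x^{*}, g(x)\rangle| = |\langle x^{*}, x\rangle| \leq 1$ and obtains the reverse inclusion by applying inverses, while you observe directly that every element of $A^{\circ_{G}} \subseteq X_{G}^{*}$ is fixed by $G^{*}$; these are the same fact. For (3) both arguments pass $x^{*}$ through the Bochner integral and use that $g(x) \in A$ and that $\mu$ is a probability measure. For (4) you take a mildly different route, writing $A^{\circ_{G}}$ as an intersection over $x \in A$ of preimages of closed discs under evaluation at the symmetrized points $\overline{x} \in X_{G}$, whereas the paper shows the complement is open by producing, for $\phi \notin A^{\circ_{G}}$, a neighbourhood $\{f : |\langle f - \phi, x_{0}\rangle| < \epsilon\}$ with $x_{0} \in A$. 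Your version is the more careful one: the paper's $x_{0}$ need not lie in $X_{G}$, so its neighbourhood is not a basic $w^{*}_{G}$-open set, and your use of $\langle x^{*}, x\rangle = \langle x^{*}, \overline{x}\rangle$ repairs exactly that. (Neither argument addresses whether the side constraint $x^{*} \in X_{G}^{*}$ cuts out a $w^{*}_{G}$-closed set, which is a genuine issue since the topology is not Hausdorff; you share this omission with the paper.)

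The one real gap is the second inclusion in (2), $(A_{G})^{\circ} \subseteq A^{\circ}$, and you were right to single it out as the step to scrutinize. Your symmetrization argument does not close it: elements of $(A_{G})^{\circ}$ are not assumed $G$-invariant, so the identity $\langle x^{*}, x\rangle = \langle x^{*}, \overline{x}\rangle$ is unavailable, and even for $x^{*} \in X_{G}^{*}$ one only controls $\langle x^{*}, \overline{x}\rangle$ when $\overline{x} \in A_{G}$, which fails for general $A$. The paper offers no argument at all here (``the inclusion follows immediately''), and in fact the inclusion is backwards for the ordinary polar: since $A_{G} \subseteq A$, polarity reverses inclusions and gives $A^{\circ} \subseteq (A_{G})^{\circ}$. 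A two-point set $A = \{x_{0}, y\}$ with $x_{0} \in X_{G}$ and $y$ of large norm outside the span of $x_{0}$ is a concrete counterexample. So the step you could not justify is a defect of the statement rather than of your argument; only $A^{\circ_{G}} \subseteq (A_{G})^{\circ}$ and $A^{\circ_{G}} \subseteq A^{\circ}$ are safe.
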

\begin{proof}

\

    \begin{enumerate}
    \item Let $x^{*} \in A^{\circ_{G}}$, then, by $G$-invariance of $x^{*}$, for all $g \in G$, 
    \[
    |\langle g^{*}(x^{*}), x \rangle| = |\langle x^{*}, g(x) \rangle| = |\langle x^{*}, x \rangle| \leq 1.
    \]
    Thus, $g^{*}(A^{\circ_{G}}) \subseteq A^{\circ_{G}}$ for all $g^{*} \in G^{*}$. By taking inverse mappings, we obtain the other inclusion.

    \item Take $x^{*} \in A^{\circ_{G}}$, then $x^{*} \in X^{*}_{G} \subseteq X^{*}$. Moreover, $|\langle x^{*}, x \rangle| \leq 1$ for all $x \in A$, in particular for all $x \in A_{G}$. Therefore, $x^{*} \in (A_{G})^{\circ}$. The inclusion $(A_{G})^{\circ} \subseteq A^{\circ}$ follows immediately.

    \item Observe that $(\overline{A})^{\circ} = \left\{x^{*} \in X^{*} ~ | ~ |\langle x^{*}, \overline{x} \rangle| \leq 1 \hspace{0.2cm} \forall \, x \in A \right\}$. Take $x^{*} \in A^{\circ}$, then,
    \[
    |\langle x^{*}, \overline{x} \rangle| = \left|\langle x^{*}, \int_{G} g(x)d\mu(g) \rangle\right| \leq \int_{G}|\langle x^{*}, g(x) \rangle| d\mu(g) \leq 1,
    \]
    since $g(x) \in A$ for being $A$ $G$-invariant, and $x^{*} \in A^{\circ}$.

    \item We are going to show that $X^{*} \backslash A^{\circ_{G}}$ is $w^{*}_{G}$-open. Recall that for all $\phi \in X^{*}_{G}$
    \[
    \mathcal{B} = \left\{f \in X^{*} ~ | ~ \langle f - \phi, x_{i} \rangle < \epsilon \hspace{0.2cm} \forall \, 1 \leq i \leq n\right\}
    \]
    for any choices of $\epsilon > 0$, $x_{1}, \dots, x_{n} \in X_G$ is a basis of neighbourhoods of $\phi$ in the $w^{*}_{G}$ topology. Take then, $\phi \in X^{*} \backslash A^{\circ_{G}}$. By definition, we know that
    \[
    |\langle \phi, x_{0} \rangle| > 1 \quad \hbox{for some } x_{0} \in A.
    \]
    Fix $\epsilon = (\langle \phi, x_{0} \rangle - 1)/2$, define  $\mathcal{E} = \left\{f \in X^{*} ~ | ~ \langle f - \phi, x_{0} \rangle < \epsilon\right\}$. Then, for every $f\in \mathcal{E}$,
    \[
    \vert\langle f, x_{0} \rangle\vert \geq \vert\langle \phi, x_{0} \rangle \vert - \vert\langle f - \phi, x_{0} \rangle\vert  \geq\vert \langle \phi, x_{0} \rangle \vert -\epsilon > 1.
    \]
    Hence, $X^{*} \backslash A^{\circ_{G}}$ is $w^{*}_{G}$-open and the conclusion holds.
    \end{enumerate}
\end{proof}

Now we are going to show two fundamental lemmas needed to prove the Krein-Smulian theorem.
\begin{Lemma}\label{Lema del lema de Krein-Smulian}
$X$ be a Banach space and $G$ a compact topological group acting on $X$. Let $C \subseteq X^{*}$ be a $G^{*}$-invariant and a convex set such that $\delta B_{X^{*}}\cap C$ is $w^{*}_{G}$-closed for every $\delta > 0$, and, moreover, $C \cap B_{X^{*}} = \emptyset$. Then, there exists a sequence $\left\{F_{n}\right\}_{n=0}^{+\infty} \subseteq X$ of $G$-invariant finite subsets that satisfies
\begin{enumerate}[(i)]
    \item $F_{n} \subseteq \frac{1}{n}B_{X}$ for all $n \in \mathbb{N}$.
    
    \item $C \cap nB_{X^{*}} \cap F_{0}^{\circ_{G}} \cap \dots \cap F_{n-1}^{\circ_{G}} = \emptyset$ for all $n \in \mathbb{N}$.
\end{enumerate}
Moreover, there exists a sequence $\left\{x_{n}\right\}_{n=1}^{+\infty} \subseteq X_{G}$ such that:
\begin{enumerate}[(iii)]
    \item $\lim_{n \to +\infty}\Vert x_{n} \Vert = 0$,

    \item[(iv)] for all $x^{*} \in C$, there exists $n \in \mathbb{N}$ such that $|\langle x^{*}, x_{n} \rangle| > 1$.
\end{enumerate}  
\end{Lemma}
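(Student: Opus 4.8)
The plan is to mimic the classical Krein--Smulian construction, building the finite sets $F_n$ by induction, but at every stage selecting the separating vectors inside $X_G$, so that the resulting sequence automatically lies in $X_G$. The one new ingredient, used repeatedly, is that a $G$-invariant functional already computes its norm on $G$-invariant points: if $x^{*}\in X^{*}_{G}$ then $\langle x^{*},y\rangle=\langle x^{*},\overline{y}\rangle$ and $\|\overline{y}\|\le\|y\|$, whence
\[
\|x^{*}\|=\sup_{y\in B_{X_{G}}}|\langle x^{*},y\rangle|.
\]
In particular, if $x^{*}\in X^{*}_{G}$ satisfies $\|x^{*}\|>n$, there is $z\in B_{X_{G}}$ with $|\langle x^{*},z\rangle|>n$, and then $x:=z/n\in\frac1n B_X\cap X_G$ satisfies $|\langle x^{*},x\rangle|>1$.

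For the base step I would take $F_{0}=\{0\}$, so that $F_{0}^{\circ_{G}}=X^{*}_{G}$ and property (ii) for $n=1$ reads $C\cap B_{X^{*}}\cap X^{*}_{G}=\emptyset$, which holds since $C\cap B_{X^{*}}=\emptyset$. Assuming $F_{0},\dots,F_{n-1}\subseteq X_{G}$ finite with $F_{k}\subseteq\frac1k B_X$ and $C\cap nB_{X^{*}}\cap F_{0}^{\circ_{G}}\cap\cdots\cap F_{n-1}^{\circ_{G}}=\emptyset$, I set
\[
K:=C\cap (n+1)B_{X^{*}}\cap F_{0}^{\circ_{G}}\cap\cdots\cap F_{n-1}^{\circ_{G}}.
\]
Because $(n+1)B_{X^{*}}$ is $w^{*}_{G}$-compact by Theorem \ref{Banach-Alaoglu-Bourbaki}, $C\cap(n+1)B_{X^{*}}$ is $w^{*}_{G}$-closed by hypothesis, and each $F_{k}^{\circ_{G}}$ is $w^{*}_{G}$-closed by Proposition \ref{Polar debil estrella tancat}, the set $K$ is $w^{*}_{G}$-compact and contained in $X^{*}_{G}$. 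The inductive hypothesis forces $\|x^{*}\|>n$ for every $x^{*}\in K$, so by the norm identity I may pick $x_{x^{*}}\in\frac1n B_X\cap X_G$ with $|\langle x^{*},x_{x^{*}}\rangle|>1$. The sets $\{z^{*}:|\langle z^{*},x_{x^{*}}\rangle|>1\}$ are $w^{*}_{G}$-open (as $x_{x^{*}}\in X_G$) and cover $K$; a finite subcover yields a finite $F_{n}\subseteq\frac1n B_X\cap X_G$ with $K\cap F_{n}^{\circ_{G}}=\emptyset$, which is exactly (ii) at level $n+1$. This establishes (i) and (ii), and each $F_n$, being a finite subset of $X_G$, is $G$-invariant.

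To produce the sequence I would enumerate $\bigcup_{n\ge1}F_{n}=\{x_{n}\}_{n\ge1}\subseteq X_G$ block by block ($F_1$, then $F_2$, \dots); since $F_{n}\subseteq\frac1n B_X$ the norms of the tail tend to $0$, giving (iii). For (iv), fix $x^{*}\in C$ and let $\overline{x^{*}}=\int_{G}g^{*}(x^{*})\,d\mu(g)\in X^{*}_{G}$ be its symmetrization. As $C$ is convex and $G^{*}$-invariant, $\overline{x^{*}}$ is a norm-limit of convex combinations of the points $g^{*}(x^{*})\in C$, all of norm $\|x^{*}\|$; these combinations lie in $C\cap\|x^{*}\|B_{X^{*}}$, which is $w^{*}_{G}$-closed, so $\overline{x^{*}}\in C$. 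If $|\langle x^{*},x_{n}\rangle|\le1$ for all $n$, then, since $x_{n}\in X_G$ gives $\langle x^{*},x_{n}\rangle=\langle\overline{x^{*}},x_{n}\rangle$, we would get $\overline{x^{*}}\in F_{k}^{\circ_{G}}$ for every $k$; choosing $m>\|\overline{x^{*}}\|$ would place $\overline{x^{*}}$ in $C\cap mB_{X^{*}}\cap F_{0}^{\circ_{G}}\cap\cdots\cap F_{m-1}^{\circ_{G}}=\emptyset$, a contradiction. Hence some $n$ satisfies $|\langle x^{*},x_{n}\rangle|=|\langle\overline{x^{*}},x_{n}\rangle|>1$.

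The main obstacle, absent from the classical statement, is the systematic reduction to $G$-invariant data. Choosing the separating vectors inside $X_G$ relies on the identity $\|x^{*}\|=\sup_{B_{X_G}}|\langle x^{*},\cdot\rangle|$ for $x^{*}\in X^{*}_{G}$, while proving (iv) for an arbitrary, possibly non-invariant $x^{*}\in C$ forces one to pass to $\overline{x^{*}}$ and to verify $\overline{x^{*}}\in C$; this is precisely where convexity, $G^{*}$-invariance, and the $w^{*}_{G}$-closedness of $\delta B_{X^{*}}\cap C$ must be combined. The compactness steps cause no trouble despite $w^{*}_{G}$ being non-Hausdorff, since extracting finite subcovers requires no separation.
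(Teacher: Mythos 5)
Your proof is correct, and while it follows the same overall skeleton as the paper (induction on $n$, $w^*_G$-compactness of $K=C\cap(n+1)B_{X^*}\cap F_0^{\circ_G}\cap\cdots\cap F_{n-1}^{\circ_G}$ via Theorem \ref{Banach-Alaoglu-Bourbaki} and Proposition \ref{Polar debil estrella tancat}), it differs in two substantive ways. First, the paper runs the inductive step by contradiction through the finite intersection property and only afterwards symmetrizes the sets $F_n$ into $F_n'=\{\overline{x}:x\in F_n\}$, which forces it to re-verify (i) and (ii) for the symmetrized sets; you instead use the identity $\Vert x^*\Vert=\sup_{y\in B_{X_G}}|\langle x^*,y\rangle|$ for $x^*\in X^*_G$ (valid because $K\subseteq F_0^{\circ_G}=X^*_G$) to pick the separating vectors inside $X_G$ from the start and extract a finite subcover directly --- an equivalent use of compactness, but one that makes the second symmetrization pass unnecessary. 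Second, and more importantly, your treatment of (iv) is more complete than the paper's: since $F_k^{\circ_G}\subseteq X^*_G$ by definition, property (ii) only yields information about $G$-invariant elements of $C$, and the paper's one-line deduction of (iv) does not address an arbitrary, possibly non-invariant $x^*\in C$. Your argument --- that $\overline{x^*}\in C$ because the Bochner integral lies in the norm-closed convex hull of $\{g^*(x^*)\}\subseteq C\cap\Vert x^*\Vert B_{X^*}$ (norm-closed since it is $w^*_G$-closed), combined with $\langle x^*,x_n\rangle=\langle\overline{x^*},x_n\rangle$ for $x_n\in X_G$ --- is exactly the missing bridge, and it is the one place where the hypotheses of convexity, $G^*$-invariance, and $w^*_G$-closedness of $\delta B_{X^*}\cap C$ are all needed simultaneously.
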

\begin{proof}
    We construct $\left\{F_{n}\right\}_{n=1}^{+\infty}$ by induction. Define $F_{0} = \left\{0\right\}$. It is clear that $(i)$ and $(ii)$ are satisfied and that $F_{0}$ is $G$-invariant. Suppose now that $F_{1}, \dots, F_{n-1}$ are all $G$-invariant, finite, and they satisfy $(i)$ and $(ii)$. Suppose, by contradiction, that
    \[
    C \cap (n+1)B_{X^{*}} \cap F_{0}^{\circ_{G}} \cap \dots \cap F_{n-1}^{\circ_{G}} \cap F^{\circ_{G}} \neq \emptyset
    \]
    for all $G$-invariant, finite subsets of $\frac{1}{n}B_{X}$, $F$. Define 
    \[
    K = C \cap (n+1)B_{X^{*}} \cap F_{0}^{\circ_{G}} \cap \dots \cap F_{n-1}^{\circ_{G}}.
    \]
    We know by hypothesis and by Proposition \ref{Polar debil estrella tancat}, $(iv)$, that $C \cap (n+1)B_{X^{*}}$ and $F_{i}^{\circ_{G}}$ are $w^{*}_{G}$-closed, and it is clear that both of them are $G$-invariant. In particular, $K$ is $w^{*}_{G}$-closed. Now, by Theorem \ref{Banach-Alaoglu-Bourbaki}, we have that $B_{X^{*}}$ is $w^{*}_{G}$-compact, and since $K$ is a $w^{*}_{G}$-closed subset of a compact subset, we deduce that $K$ is $w^{*}_{G}$-compact. Moreover, it is $G$-invariant for being a finite intersection of $G$-invariant subsets.
    
    Since $K \cap F^{\circ_{G}} \neq \emptyset$, by the finite intersection property we get that:
    \[
    I = \bigcap\left\{K \cap F^{\circ_{G}} ~ | ~ F \subseteq \frac{1}{n}B_{X} \hbox{ is any finite subset}\right\} \neq \emptyset.
    \]
    Let $x^{*} \in I$, then $x^{*} \in F^{\circ_{G}}$, so
    \[
    |\langle x^{*}, x \rangle| \leq 1 \quad \forall x \in F \subseteq \frac{1}{n}B_{X}.
    \]
    But
    \[
    |\langle x^{*}, x \rangle| \leq \Vert x^{*} \Vert \, \Vert x \Vert \leq \frac{\Vert x^{*} \Vert}{n} \leq 1.
    \]
    Hence, $\Vert x^{*} \Vert \leq n$. So, we have obtained that
    \begin{align*}
    x^{*} \in K \cap nB_{X^{*}} & = C \cap (n+1)B_{X^{*}} \cap F_{0}^{\circ_{G}} \cap \dots \cap F_{n-1}^{\circ_{G}} \cap nB_{X^{*}} \\
    & = C \cap nB_{X^{*}} \cap F_{0}^{\circ_{G}} \cap \dots \cap F_{n-1}^{\circ_{G}} = \emptyset,    
    \end{align*}
    a contradiction. Then,
    \[
    C \cap (n+1)B_{X^{*}} \cap F_{0}^{\circ_{G}} \cap \dots \cap F_{n-1}^{\circ_{G}} \cap F^{\circ_{G}} = \emptyset.
    \]
    This proves $(i)$ and $(ii)$, let us show now $(iii)$. Define $F_{n}' = \left\{\overline{x} ~ | ~ x \in F_{n}\right\}$. For the sequence $\left\{x_{n}\right\}_{n=1}^{+\infty}$ we rearrange, if required, the elements of $\cup_{n=0}^{+\infty}F_{n}'$ and call them $\left\{x_{n}\right\}_{n=1}^{+\infty}$. Observe that every $x_{n} \in \frac{1}{n}B_{X}$, this implies $(iii)$. The only thing that remains to show is that $(i)$ and $(ii)$ hold for $F_{n}'$. Define,
    \[
    K' = C \cap nB_{X^{*}} \cap (F_{0}')^{\circ_{G}} \cap \dots \cap (F_{n-1}')^{\circ_{G}}.
    \]
    We want to show that if $K'$ is non-empty, then $K$ is also non-empty. Take $x^{*} \in K'$, then it is clear, by convexity, that $\overline{x}^{*} \in nB_{X^{*}}$ and $\overline{x}^{*} \in C$. Moreover, $x^{*} \in (F_{n-1}')^{\circ}$, we want to see that $\overline{x^{*}} \in F_{n-1}^{\circ}$. This is clear by $G$-invariance of $F$:
    \[
    |\langle \overline{x}^{*}, x \rangle| = \left|\langle \int_{G^{*}}g^{*}(x^{*}) d\mu(g), x \rangle\right| = \left|\int_{G^{*}}\langle g^*(x^{*}), x \rangle d\mu(g)\right| \leq
    \]\[
    \leq \left|\int_{G}\langle x^{*}, g(x) \rangle d\mu(g)\right| = |\langle x^{*}, \overline{x} \rangle| \leq 1.
    \]
    This shows that $\overline{x}^{*} \in F_{n-1}^{\circ}$, also observe that $\overline{x}^{*} \neq 0$ since $C$ is a convex set which does not contain the zero, otherwise we would have a contradiction with the hypothesis $C \cap B_{X^{*}} = \emptyset$. And we have shown that for given $x^{*} \in K'$, then $\overline{x}^{*} \in \overline{K}$, i.e., if $K' \neq \emptyset$ then $K \neq \emptyset$. Thus, $(i)$ and $(ii)$ also hold when changing $F_{i}$ by $F_{i}'$. 
    
    Finally, let's move to the proof of $(iv)$. Let $x^{*} \in C$, there exists $n_{0} \in \mathbb{N}$ such that $\Vert x^{*} \Vert \leq n_{0}$. So, there exists an index $j \in \left\{1, \dots, n_{0}\right\}$ with $x^{*} \in F_{j}^{\circ}$. But, by $(ii)$, there exists a $k\in\{0,\ldots,j\}$ such that
    \[
    |\langle x^{*}, x_{k} \rangle| > 1.
    \]
\end{proof}

With the help of the previous lemma, we can obtain the following one.
\begin{Lemma}\label{Lema de Krein-Smulian}
Let $X$ be a Banach space, $G$ a compact topological group acting on $X$, and $C \subseteq X^{*}$ a $G^{*}$-invariant convex subset. Suppose that $\delta B_{X^{*}} \cap C$ is $w^{*}_{G}$-closed for every $\delta > 0$, and $C \cap B_{X^{*}} = \emptyset$. Then, there exists $x \in X_{G}$ such that
\[
\langle x^{*}, x \rangle \geq 1 \quad \forall \, x^{*} \in C.
\]
\end{Lemma}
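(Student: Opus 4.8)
The plan is to reduce the statement to the classical Hahn--Banach separation theorem in the sequence space $c_0$, mirroring the non-invariant Krein--Smulian argument, and to exploit the fact that the separating sequence furnished by the previous lemma already lives in $X_G$.

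First I would invoke Lemma \ref{Lema del lema de Krein-Smulian} to obtain a sequence $\{x_n\}_{n\geq 1}\subseteq X_G$ with $\|x_n\|\to 0$ and such that for every $x^*\in C$ there is an index $n$ with $|\langle x^*,x_n\rangle|>1$. Since $\|x_n\|\to 0$, I would define the bounded linear operator
\[
T\colon X^*\to c_0,\qquad T x^*=\big(\langle x^*,x_n\rangle\big)_{n\geq 1},
\]
which is well defined because $|\langle x^*,x_n\rangle|\leq \|x^*\|\,\|x_n\|\to 0$. The image $T(C)$ is convex, and property $(iv)$ of that lemma says precisely that $\|T x^*\|_\infty>1$ for every $x^*\in C$; hence $T(C)$ is disjoint from the open unit ball $U=\{y\in c_0:\|y\|_\infty<1\}$.

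Next I would separate the convex set $T(C)$ from the open convex set $U$ by the classical Hahn--Banach separation theorem in $c_0$. This yields a nonzero functional $\phi=(a_n)_n\in (c_0)^*\cong \ell_1$ and a scalar $\alpha$ with $\mathrm{Re}\langle\phi,y\rangle\leq \alpha\leq \mathrm{Re}\langle\phi,Tx^*\rangle$ for all $y\in U$ and all $x^*\in C$. Since $\sup_{y\in U}\mathrm{Re}\langle\phi,y\rangle=\|\phi\|_{\ell_1}$, we obtain $\alpha\geq\|\phi\|_{\ell_1}>0$, and after rescaling $\phi$ by $1/\alpha$ we may assume $\mathrm{Re}\langle\phi,Tx^*\rangle\geq 1$ for every $x^*\in C$. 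Finally I would set
\[
x=\sum_{n\geq 1}a_n x_n .
\]
Because $(a_n)_n\in\ell_1$ and $\sup_n\|x_n\|<\infty$, this series converges absolutely in $X$; and since every $x_n$ lies in the closed subspace $X_G$, the sum $x$ again belongs to $X_G$. Computing $\langle x^*,x\rangle=\sum_n a_n\langle x^*,x_n\rangle=\langle\phi,Tx^*\rangle$ then gives $\langle x^*,x\rangle\geq 1$ for all $x^*\in C$ (taking real parts when $\mathbb{K}=\mathbb{C}$), which is the desired conclusion.

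The genuinely $G$-invariant content is entirely packaged in Lemma \ref{Lema del lema de Krein-Smulian}; the only place the group structure re-enters here is the observation that $x\in X_G$, which holds because $X_G$ is a closed linear subspace (being the intersection of the kernels of the continuous operators $g-\mathrm{Id}$) and contains each $x_n$. Consequently the main obstacle is not in this step but in correctly transporting the separation back to $X$: one must check that the $\ell_1$ functional $\phi$ defines a convergent series $\sum_n a_n x_n$ in $X$ and that the pairing $\langle x^*,\cdot\rangle$ commutes with the infinite sum, both of which follow from absolute convergence and the continuity of each $x^*$.
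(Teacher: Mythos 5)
Your proposal is correct and follows essentially the same route as the paper: both invoke Lemma \ref{Lema del lema de Krein-Smulian} to produce the $G$-invariant sequence $\{x_n\}$, define the same operator $T\colon X^*\to c_0$, separate $T(C)$ from a unit ball by the classical Hahn--Banach theorem, and pull the resulting $\ell_1$ functional back to an absolutely convergent series $x=\sum_n a_nx_n\in X_G$. The only (cosmetic, and arguably cleaner) difference is that you separate $T(C)$ from the open unit ball of $c_0$ and normalize by the separation constant, whereas the paper separates from $B_{c_0}^{\circ}=B_{\ell_1}$ and takes $y\in S_{\ell_1}$.
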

\begin{proof}
By Lemma \ref{Lema del lema de Krein-Smulian} we know that there exists a sequence $\left\{x_{n}\right\}_{n=1}^{+\infty}$ of $G$-invariant points such that $\lim_{n \to +\infty}\Vert x_{n} \Vert = 0$. Then
\[
\begin{array}{cccc}
   T \colon  & X^{*} & \to & (c_{0}, \Vert \cdot \Vert_{\infty}) \\
     & x^{*} & \mapsto & \left\{\langle x^{*}, x_{n} \rangle\right\}_{n=1}^{+\infty}
\end{array}
\]
is a linear, well-defined, and continuous operator. So, since $C$ is convex, $T(C)$ is again convex. We want to show now that $C \cap nB_{X^{*}} = \emptyset$ implies that $T(C) \cap B_{c_{0}}^{\circ} = \emptyset$. Suppose, by contradiction, that there exists $z \in T(C) \cap B_{c_{0}}^{\circ}$. Then $z \in T(C)$, and there exists a point $x^{*} \in C$ such that $T(x^{*}) = z$. On the other hand, $z \in B_{c_{0}}^{\circ} = B_{l_{1}}$, therefore, $z = \left\{z_{n}\right\}_{n=1}^{+\infty}$ with $\sum_{n=1}^{+\infty}|z_{n}| < 1$. But $z_{n} = \langle x^{*}, x_{n} \rangle$, by definition of $T$, hence
\[
\sum_{n=1}^{+\infty}|\langle x^{*}, x_{n} \rangle| < 1.
\]
This is a contradiction with $(iv)$ of Lemma \ref{Lema del lema de Krein-Smulian}. Applying now the classical Hahn-Banach separation theorem to $T(C)$ and $B_{c_{0}}^{\circ}$ we know that there exists a point $y = \left\{y_{n}\right\}_{n=1}^{+\infty} \in S_{l_{1}}$ such that
\begin{equation}\label{Teo633:Eq1}
\sup\left\{\langle y, x \rangle ~ | ~ x \in B_{c_{0}}^{\circ}\right\} \leq \langle y, T(x^{*}) \rangle \quad \forall \, x^{*} \in C.
\end{equation}
It is clear that:
\begin{equation}\label{Teo633:Eq2}
\sup\left\{\langle y, x \rangle ~ | ~ x \in B_{c_{0}}^{\circ}\right\} = 1.
\end{equation}
Moreover
\[
\langle y, T(x^{*}) \rangle = \sum_{n=1}^{+\infty}y_{n} \langle x^{*}, x_{n} \rangle = \sum_{n=1}^{+\infty}\langle x^{*}, x_{n}y_{n} \rangle \quad \forall \, x^{*} \in C.
\]
By condition $(iii)$ of Lemma \ref{Lema del lema de Krein-Smulian}, we know that there exists $M > 0$ such that $\Vert x_{n} \Vert < M$ for all $n \in \mathbb{N}$. And, since $y \in S_{l_{1}}$:
\[
\Vert x_{n}y_{n} \Vert \leq \Vert x_{n} \Vert \, |y_{n}| \leq M|y_{n}|.
\]
Thus,
\[
\sum_{n=1}^{+\infty}\Vert x_{n}y_{n} \Vert \leq M\sum_{n=1}^{+\infty}|y_{n}| < +\infty.
\]
Now, since $X$ is a Banach space, every absolutely convergent series is convergent, i.e, there exists a point $x \in X$ such that
\[
x = \sum_{n=1}^{+\infty}x_{n}y_{n}.
\]
Observe that $x$ is $G$-invariant since for every $g \in G$:
\[
g(x) = g\left(\sum_{i=1}^{+\infty}x_{n}y_{n}\right) = \sum_{i=1}^{+\infty}y_{n}g(x_{n}) = \sum_{i=1}^{+\infty}x_{n}y_{n} = x,
\]
where we have applied that $x_{n}$ is $G$-invariant for every $n \in \mathbb{N}$, and the linearity and continuity of $g$. Putting this now together with \eqref{Teo633:Eq1} and \eqref{Teo633:Eq2}, we conclude the following
\[
1 \leq \langle y, T(x^{*}) \rangle = \lim_{n \to +\infty}\sum_{k=1}^{n}\langle x^{*}, x_{k}y_{k} \rangle = \langle x^{*}, x \rangle.
\]
\end{proof}

The next result will help us know when a convex subset is norm-closed.
\begin{Proposition}\label{Conjunt convex tancat}
Let $X$ be a Banach space, $G$ a compact topological group acting on $X$, and $C$ a convex subset of $ X^{*}$. If $\delta B_{X^{*}} \cap C$ is $w^{*}_{G}$-closed for every $\delta > 0$, then $C$ is norm-closed in $X^{*}$.    
\end{Proposition}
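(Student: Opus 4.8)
The plan is to obtain norm-closedness as a direct consequence of the hypothesis, using the elementary fact that the topology $w^{*}_{G}$ is coarser than the norm topology on $X^{*}$. First I would record this comparison of topologies. By definition, a basic $w^{*}_{G}$-open set has the form $\{f \in X^{*} : |\langle f - f_{0}, x_{i}\rangle| < \epsilon,\ 1 \le i \le n\}$ for some $f_{0} \in X^{*}$, $x_{1},\dots,x_{n} \in X_{G}$ and $\epsilon > 0$. Since every evaluation $f \mapsto \langle f, x_{i}\rangle$ is norm-continuous on $X^{*}$ (being bounded by $\Vert x_{i}\Vert$), each such basic set is norm-open, hence so is every $w^{*}_{G}$-open set. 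Passing to complements, every $w^{*}_{G}$-closed subset of $X^{*}$ is norm-closed. In particular, the hypothesis yields that $\delta B_{X^{*}} \cap C$ is norm-closed for every $\delta > 0$.

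With this reduction in place, I would finish by a routine sequential argument, which is legitimate because $(X^{*}, \Vert\cdot\Vert)$ is a metric space. Let $\{f_{n}\}_{n} \subseteq C$ converge in norm to some $f \in X^{*}$. As norm-convergent sequences are bounded, the number $\delta := \sup_{n}\Vert f_{n}\Vert$ is finite, and $\Vert f\Vert = \lim_{n}\Vert f_{n}\Vert \le \delta$; thus $f_{n} \in \delta B_{X^{*}} \cap C$ for all $n$ and $f \in \delta B_{X^{*}}$. Since $\delta B_{X^{*}} \cap C$ is norm-closed by the previous step and $f_{n} \to f$ in norm, we obtain $f \in \delta B_{X^{*}} \cap C \subseteq C$. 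Hence $C$ is norm-closed.

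It is worth stressing that this conclusion needs neither the convexity of $C$ nor the separation machinery of Lemmas \ref{Lema del lema de Krein-Smulian} and \ref{Lema de Krein-Smulian}; those are the tools required for the substantially harder statement that such a $C$ is $w^{*}_{G}$-closed. The only delicate point — and the main, if modest, obstacle — is justifying the implication ``$w^{*}_{G}$-closed $\Rightarrow$ norm-closed'' even though $w^{*}_{G}$ is not Hausdorff: the comparison of topologies rests solely on the norm-continuity of the generating functionals and is entirely insensitive to separation axioms, so non-Hausdorffness causes no trouble here.
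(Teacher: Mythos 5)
Your proof is correct and follows essentially the same route as the paper: both arguments rest on the observation that $w^{*}_{G}$ is coarser than the norm topology, combined with a routine sequential argument placing a norm-convergent sequence from $C$ inside some $\delta B_{X^{*}} \cap C$. The only cosmetic difference is that the paper keeps the set $w^{*}_{G}$-closed and upgrades the sequence to $w^{*}_{G}$-convergence, whereas you downgrade the set to norm-closed and stay in the norm topology; your closing remarks on convexity being unnecessary and on non-Hausdorffness being harmless are accurate.
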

\begin{proof}
Let $\left\{x^{*}_{n}\right\}_{n=1}^{+\infty}$ be a sequence in $C$ that converges in norm to $x^{*}$ in $X^{*}$. Then, there exists a $\delta > 0$ such that $\left\{x_{n}^{*}\right\}_{n=1}^{+\infty} \subseteq \delta B_{X^{*}}$, hence, $x_{n}^{*} \in C \cap \delta B_{X^{*}}$. Since $\left\{x_{n}^{*}\right\}$ converges in norm to $x^{*}$, then it converges $w^{*}_{G}$ to $x^{*}$. Also, $C \cap \delta B_{X^{*}}$ is $w^{*}_{G}$-closed so $x^{*} \in C \cap \delta B_{X^{*}}$. In particular, $x^{*} \in C$.    
\end{proof}

Finally, we will give a $G$-invariant version of the Krein-Smulian theorem.
\begin{Theorem}[Krein-Smulian]\label{Krein-Smulian}
Let $X$ be a Banach space, $G$ a compact topological group acting on $X$, and $C \subseteq X^{*}$ a $G^*$-invariant convex set. If $\delta B_{X^{*}} \cap C$ is $w^{*}_{G}$-closed for every $\delta > 0$, then $C$ is $w^{*}_{G}$-closed in $X^{*}$.
\end{Theorem}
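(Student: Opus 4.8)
The plan is to show that the complement $X^{*}\setminus C$ is $w^{*}_{G}$-open by separating each point $x^{*}_{0}\notin C$ from $C$ with a single $w^{*}_{G}$-continuous functional, that is, by an evaluation $\langle\,\cdot\,,x\rangle$ with $x\in X_{G}$. The engine for this separation is Lemma \ref{Lema de Krein-Smulian}, which, for a $G^{*}$-invariant convex set disjoint from $B_{X^{*}}$ whose ball-truncations are $w^{*}_{G}$-closed, produces $x\in X_{G}$ with $\langle x^{*},x\rangle\ge 1$ on that set. First I would record two preliminary facts. By Proposition \ref{Conjunt convex tancat} the hypothesis forces $C$ to be norm-closed, so $\operatorname{dist}(y,C)>0$ whenever $y\notin C$. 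Secondly, since $\langle f,x\rangle=\langle\overline{f},x\rangle$ for every $x\in X_{G}$, a functional $f$ and its symmetrization $\overline{f}$ are $w^{*}_{G}$-indistinguishable; combining this with the hypothesis that $\delta B_{X^{*}}\cap C$ is $w^{*}_{G}$-closed (and with $\|\overline{f}\|\le\|f\|$) yields the saturation property $f\in C\Leftrightarrow\overline{f}\in C$. In particular $x^{*}_{0}\notin C$ gives $\overline{x^{*}_{0}}\notin C$, and it suffices to separate the $G^{*}$-invariant point $a:=\overline{x^{*}_{0}}$, because any $x\in X_{G}$ satisfies $\langle x^{*}_{0},x\rangle=\langle a,x\rangle$.

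Next I would reduce to the configuration required by Lemma \ref{Lema de Krein-Smulian}. Put $r=\operatorname{dist}(a,C)>0$ and consider $C'=\tfrac{2}{r}(C-a)$, which is convex, $G^{*}$-invariant (as $a\in X^{*}_{G}$), and satisfies $\|y^{*}\|\ge 2$ for all $y^{*}\in C'$, so $C'\cap B_{X^{*}}=\emptyset$. The lemma would then return $x\in X_{G}$ with $\langle x^{*}-a,x\rangle\ge r/2$ for all $x^{*}\in C$, whence $\langle x^{*},x\rangle\ge\langle a,x\rangle+r/2>\langle x^{*}_{0},x\rangle$ for every $x^{*}\in C$. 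The $w^{*}_{G}$-open half-space $\{f:\langle f,x\rangle<\langle a,x\rangle+r/2\}$ then contains $x^{*}_{0}$ and misses $C$, which closes the argument.

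The hard part will be verifying that $C'$ meets the ball hypothesis of Lemma \ref{Lema de Krein-Smulian}, i.e. that $\delta B_{X^{*}}\cap C'$ is $w^{*}_{G}$-closed for every $\delta$. This is exactly where the $G$-invariant setting departs from the classical one: the full norm is not $w^{*}_{G}$-lower semicontinuous, so closed balls need not be $w^{*}_{G}$-closed and the ball condition does not transfer automatically under the translation by $a$. To control this I would pass to symmetrizations, since $w^{*}_{G}$-convergence of a net in $C-a$ forces $\sigma(X^{*}_{G},X_{G})$-convergence of the averaged net $\overline{c_{\alpha}-a}$ in $X^{*}_{G}$, and closed balls of $X^{*}_{G}\cong(X_{G})^{*}$ (isometrically, via the $G$-invariant Hahn--Banach extension) are $\sigma(X^{*}_{G},X_{G})$-closed. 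The clean variant I would actually carry out bypasses the translation altogether by reducing to the closed subspace $X_{G}$: using $\langle f,x\rangle=\langle\overline f,x\rangle$ one checks that $w^{*}_{G}$ is the initial topology induced on $X^{*}$ by the symmetrization map $R\colon f\mapsto\overline f$ from $(X^{*}_{G},\sigma(X^{*}_{G},X_{G}))$, and that $C=R^{-1}(C\cap X^{*}_{G})$ by saturation; the trace $C\cap X^{*}_{G}$ is convex with $\sigma(X^{*}_{G},X_{G})$-closed ball-truncations, so the classical Krein--Smulian theorem applied in the Banach space $X_{G}$ gives that $C\cap X^{*}_{G}$ is $\sigma(X^{*}_{G},X_{G})$-closed, whence $C$ is $w^{*}_{G}$-closed. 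In either route the crux is the same: controlling the norm under symmetrization, which is precisely what makes balls behave once attention is restricted to $X^{*}_{G}$.
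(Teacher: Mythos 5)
Your ``clean variant'' is correct, but it takes a genuinely different route from the paper, which instead follows the first half of your proposal: the paper shows $\overline{x_0^*}\notin C$ exactly as you do (via the $w^*_G$-indistinguishability of a functional and its symmetrization together with the closedness of the ball truncations), invokes Proposition \ref{Conjunt convex tancat} to find $\delta>0$ with $\delta^{-1}(C-\overline{x_0^*})\cap B_{X^*}=\emptyset$, and then feeds this translate into Lemma \ref{Lema de Krein-Smulian} to produce the separating point of $X_G$. Notably, the paper does \emph{not} verify the step you single out as the hard part, namely that the ball truncations of the translated set are still $w^*_G$-closed; your worry here is legitimate, since every $w^*_G$-closed set is saturated under translation by $(X_G)^{\perp}=\{h\in X^*: h|_{X_G}=0\}$, so norm balls of $X^*$ are not $w^*_G$-closed whenever $X_G\neq X$ and the ball condition does not transfer to a translate automatically. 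Your reduction to the classical Krein--Smulian theorem in the Banach space $X_G$ -- using that $w^*_G$ is the initial topology of the symmetrization map $R\colon f\mapsto\overline f$ into $\bigl(X^*_G,\sigma(X^*_G,X_G)\bigr)\cong\bigl((X_G)^*,w^*\bigr)$, that $C=R^{-1}(C\cap X^*_G)$ by saturation, and that the traces $\delta B_{X^*}\cap C\cap X^*_G$ are relatively closed in a topology that coincides with the weak-star topology of $(X_G)^*$ -- bypasses the translation entirely and needs neither Lemma \ref{Lema del lema de Krein-Smulian} nor Lemma \ref{Lema de Krein-Smulian}. What the paper's route buys is a self-contained $G$-invariant separation lemma developed from scratch; what yours buys is a shorter argument that leans on the classical theorem applied to the fixed-point space and, importantly, avoids the unverified hypothesis in the paper's application of its own lemma.
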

\begin{proof}
Observe that $C$ is $w^{*}_{G}$-closed if for all $x^{*} \in X^{*} \backslash C$ there exists a $w^{*}_{G}$-neighbourhood $U$ of $x^{*}$ such that $U \cap C = \emptyset$, i.e., $x^{*}$ is not in the $w^{*}_{G}$-closure of $C$.

Fix $x_{0}^{*} \in X^{*} \backslash C$. We are going to show now that $\overline{x_{0}^{*}} \in X^{*} \backslash C$. Suppose, by contradiction, that $\overline{x_{0}^{*}} \in C$, then for $\delta$ big enough, $x_{0}^{*}$, $\overline{x_{0}^{*}} \in \delta B_{X^{*}}$. Note that $\overline{x_{0}^{*}} \in C \cap \delta B_{X^{*}}$ but $x_{0}^{*} \notin C \cap \delta B_{X^{*}}$. Now, for $U$ a neighbourhood of $\overline{x_{0}^{*}}$ there exists $x_{1}, \ldots, x_{n} \in X_{G}$ and $\epsilon > 0$ such that, for $i=1,\ldots, n$,
\[
\langle x^{*} - \overline{x_{0}^{*}}, x_{i} \rangle \leq \epsilon.
\]
But observe that
\begin{align*}
\langle x_{0}^{*} - \overline{x_{0}^{*}}, x_{i} \rangle & = \langle x_{0}^{*}, x_{i} \rangle  -  \left\langle \int_{G}g^{*}(x_{0}^{*})d\mu(g), x_{i} \right\rangle \\ &= \langle x_{0}^{*}, x_{i} \rangle  - \int_{G}\langle x_{0}^{*}, g(x_{i}) \rangle d\mu(g) \\ &=  \langle x_{0}^{*}, x_{i} \rangle - \langle x_{0}^{*}, x_{i} \rangle = 0. 
\end{align*}
Then, $x_{0}^* \in U$, and this for every $U$ neighbourhood of $\overline{x_{0}^{*}} \in C$. Thus, $x_{0}^{*} \in \overline{C \cap \delta B_{X^{*}}}^{w_G^*} = C \cap \delta B_{X^{*}}$, since $C \cap \delta B_{X^{*}}$ is a $w^{*}_{G}$-closed set. This being a contradiction with the fact that $x_{0}^{*} \notin C$.

By Proposition \ref{Conjunt convex tancat} we know that $C$ is norm-closed, so, since $\overline{x_{0}^{*}} \in X^{*} \backslash C$, there exists a $\delta > 0$ such that $(\overline{x_{0}^{*}} + \delta B_{X^{*}}) \cap C = \emptyset$, which is equivalent to say that
\[
(\delta^{-1}(C - \overline{x^{*}})) \cap B_{X^{*}} = \emptyset.
\]
Applying now Lemma \ref{Lema de Krein-Smulian}, we know that there exists $x \in X_{G}$ such that $\langle y^{*}, x \rangle \geq 1$ for all $y^{*} \in \delta^{-1}(C - \overline{x_{0}^{*}})$. Therefore,
\[
W = \left\{x^{*} \in X^{*} ~ | ~ \langle x^{*}, x \rangle < 1 \hspace{0.2cm} \forall \, x \in X_G\right\}
\]
is a $w^{*}_{G}$-neighbourhood of zero which does not intersect $\delta^{-1}(C - \overline{x_{0}^{*}})$, i.e., $x^{*}$ is not in the $w^{*}_{G}$-closure of $C$.
\end{proof}

\section{$G$-SSD as a sufficient condition for $G$-reflexivity}\label{Seccio 4}

In this final section, we delve into an essential aspect of our investigation: establishing a necessary condition for a space to be \( G \)-reflexive. By uncovering this condition, we aim to deepen our understanding of the interplay between geometric properties of the space and the set of group invariant functionals.

\begin{Remark}
\label{remark-moreau}
By Moreau's maximum formula, Theorem \ref{Formula del maxim de Moreau}, we know that for any $x_0,x \in X_G$
\[
    \tau(x_0, x) = \lim_{t \to 0^{+}} \frac{\Vert x_0 + tx \Vert - 1}{t} = d^{+}\Vert  x_{0}\Vert(x) = \max\left\{\langle x^{*}, x \rangle ~ | ~ x^{*} \in \partial_{G}\Vert \cdot \Vert(x_0) \right\}.
\]
\end{Remark}

In general, we have \(\partial \|\cdot\| (x_0) = J(x_0)\). The following result demonstrates that this equality holds in our context as well.
    \begin{Proposition}
    Let $X$ be a Banach space, and $G$ a compact topological group acting on $X$. Then, for fixed $x_{0} \in S_{X}$, $\partial_{G}\Vert \cdot \Vert (x_{0}) = J_{G}(x_{0})$.
    \end{Proposition}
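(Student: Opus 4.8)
The plan is to prove the two set inclusions separately, exploiting that both $J_{G}(x_{0})$ and $\partial_{G}\Vert\cdot\Vert(x_{0})$ are by definition subsets of $X_{G}^{*}$. Consequently the $G$-invariance requirement is built into both descriptions, so nothing extra has to be verified on that account and the argument reduces to the classical one. Throughout I use that $x_{0}\in S_{X}$, so that $\Vert x_{0}\Vert=1$ and $\Vert x_{0}\Vert^{2}=1$.

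For the inclusion $J_{G}(x_{0})\subseteq\partial_{G}\Vert\cdot\Vert(x_{0})$, I would take $f\in J_{G}(x_{0})$, so that $f\in X_{G}^{*}$, $\Vert f\Vert=1$ and $\langle f,x_{0}\rangle=1$. Then for an arbitrary $x\in X$,
\[
\langle f,x-x_{0}\rangle=\langle f,x\rangle-\langle f,x_{0}\rangle=\langle f,x\rangle-1\le\Vert f\Vert\,\Vert x\Vert-1=\Vert x\Vert-\Vert x_{0}\Vert,
\]
which is exactly the defining inequality for membership in $\partial_{G}\Vert\cdot\Vert(x_{0})$.

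For the reverse inclusion $\partial_{G}\Vert\cdot\Vert(x_{0})\subseteq J_{G}(x_{0})$, I would take $h\in\partial_{G}\Vert\cdot\Vert(x_{0})$, so $h\in X_{G}^{*}$ and $\langle h,x-x_{0}\rangle\le\Vert x\Vert-1$ for every $x\in X$. The key idea is to feed two well-chosen test vectors into this inequality. Taking $x=0$ gives $\langle h,-x_{0}\rangle\le-1$, that is $\langle h,x_{0}\rangle\ge1$, while taking $x=2x_{0}$ gives $\langle h,x_{0}\rangle\le1$; together these force $\langle h,x_{0}\rangle=1$. Substituting $\langle h,x_{0}\rangle=1$ back into the defining inequality collapses it to $\langle h,x\rangle\le\Vert x\Vert$ for all $x\in X$, and replacing $x$ by $-x$ then yields $|\langle h,x\rangle|\le\Vert x\Vert$, hence $\Vert h\Vert\le1$. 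Conversely $1=\langle h,x_{0}\rangle\le\Vert h\Vert\,\Vert x_{0}\Vert=\Vert h\Vert$, so $\Vert h\Vert=1$. Thus $h\in X_{G}^{*}$ satisfies $\Vert h\Vert=1=\Vert x_{0}\Vert$ and $\langle h,x_{0}\rangle=1=\Vert x_{0}\Vert^{2}$, i.e. $h\in J_{G}(x_{0})$.

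There is no genuine obstacle here: the statement is a direct transplant of the classical identity $\partial\Vert\cdot\Vert(x_{0})=J(x_{0})$, and the group action plays no active role because both sides are already restricted to $X_{G}^{*}$. The only step requiring slight care is the reverse inclusion, where one must pick the test vectors $x=0$ and $x=2x_{0}$ to pin down the value $\langle h,x_{0}\rangle=1$ before extracting the norm bound; over $\mathbb{K}=\mathbb{C}$ one should additionally note that the subdifferential inequality is read via its real part, so that the substitution $x\mapsto-x$ still delivers $\Vert h\Vert\le1$.
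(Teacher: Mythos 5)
Your proof is correct and follows essentially the same route as the paper's: the forward inclusion via $\langle f,x\rangle-1\le\Vert f\Vert\Vert x\Vert-1$, and the reverse inclusion by testing the subdifferential inequality at well-chosen points to force $\langle h,x_{0}\rangle=1$ and $\Vert h\Vert=1$ (the paper gets $\Vert h\Vert\le1$ from $\Vert x\Vert-\Vert x_{0}\Vert\le\Vert x-x_{0}\Vert$ rather than from the collapsed inequality, a cosmetic difference). Your observation that the group action is inert here, both sets being defined inside $X_{G}^{*}$, matches the paper's treatment.
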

    \begin{proof}
        Recall that:
        \begin{align*}
        \partial_{G}\Vert \cdot \Vert(x_{0}) &= \left\{x^{*} \in X^{*}_{G} ~ |~ \langle x^{*}, x - x_{0} \rangle \leq \Vert x \Vert - \Vert x_{0} \Vert \hspace{0.2cm} \forall \, x \in X\right\}
        \end{align*}
        and 
        \[
        J_{G}(x_{0}) = \left\{x^{*} \in X^{*}_{G} ~ | ~ \Vert x^{*} \Vert = \Vert x_{0} \Vert, ~~ \langle x^{*}, x_{0} \rangle = \Vert x_{0} \Vert^{2}\right\}.
        \]
        Take $x^{*} \in J_{G}(x_{0})$, then applying the definition of $J_{G}(x_{0})$ and taking into account that $x_{0} \in S_{X}$, we obtain that
        \[
        \langle x^{*}, x - x_{0} \rangle = \langle x^{*}, x \rangle - \langle x^{*}, x_{0} \rangle = \langle x^{*}, x \rangle - \Vert x_{0} \Vert^{2} \leq \Vert x^{*} \Vert \, \Vert x \Vert - \Vert x_{0} \Vert^{2} =
        \]\[
        = \Vert x_{0} \Vert (\Vert x \Vert - \Vert x_{0} \Vert) = \Vert x \Vert - \Vert x_{0} \Vert.
        \]
        Then, it is clear that $J_{G}(x_{0}) \subseteq \partial_{G}\Vert \cdot \Vert(x_{0})$.

        Now take $x^{*} \in \partial_{G}\Vert \cdot \Vert(x_{0})$, by definition we know that
        \[
        \langle x^{*}, x - x_{0} \rangle \leq \Vert x \Vert - \Vert x_{0} \Vert \leq \Vert x - x_{0} \Vert.
        \]
        Hence $\Vert x^{*} \Vert \leq 1$. Now, for $x = 0$, it is clear that
        \[
        \langle x^{*}, -x_{0} \rangle \leq - \Vert x_{0} \Vert.
        \]
        Therefore
        \[
        \Vert x^{*} \Vert \geq \langle x^{*}, x_{0} \rangle \geq \Vert x_{0} \Vert = 1.
        \]
        From where we deduce that $\Vert x^{*} \Vert = 1$, and $\langle x^{*}, x_{0} \rangle = 1$. Thus, the other inclusion holds, that is, $\partial_{G}\Vert \cdot \Vert(x_{0}) \subseteq J_{G}(x_{0})$.
    \end{proof}
    As a consequence, we have that, for $x_0\in X$ and $x \in X_G$,
    \begin{align*}
     \tau(x_0, x) &\geq \tau(\overline{x_0}, x)=\max\left\{\langle x^{*}, x \rangle ~ | ~ x^{*} \in J_{G}(\overline{x_{0}})\right\}\\
    \end{align*}

Let us finish this section by providing the following properties of the dissipative set. 
\begin{Proposition}\label{Disipatiu negatiu}
    Let $(X, \Vert \cdot \Vert)$ be a Banach space, $G$ a compact topological group acting on $X$, and $u \in S_{X_G}$ be fixed. The set $\text{Dis}_{G}(X)$ has the following properties
    \begin{enumerate}[(i)]
        \item $\text{Dis}_{G}(X)$ is a non-empty closed convex cone.
        \item $\text{Dis}_{G}(X) = \left\{x \in X ~ | ~ \tau(u,\overline{x}) \leq 0\right\}$.
    \end{enumerate}
\end{Proposition}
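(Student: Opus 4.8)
The plan is to prove the two properties separately, using the characterization of $\tau$ via the $G$-invariant duality mapping and the Moreau formula established in Remark \ref{remark-moreau}.

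For property (ii), I would first establish the equivalence $\langle x^*, x\rangle = \langle x^*, \overline{x}\rangle$ for every $x^* \in J_G(u)$, since each $x^*$ is $G$-invariant; this shows that membership in $\text{Dis}_G(X)$ depends only on $\overline{x}$. The key identity is then that $\tau(u, \overline{x}) = \max\{\langle x^*, \overline{x}\rangle \mid x^* \in J_G(u)\}$ by the displayed consequence of Moreau's formula, using that $u \in S_{X_G}$ so that $\partial_G\Vert\cdot\Vert(u) = J_G(u)$. Since by definition $x \in \text{Dis}_G(X)$ means $\langle x^*, x\rangle \leq 0$ for all $x^* \in J_G(u)$, and using the $G$-invariance identity this is equivalent to $\langle x^*, \overline{x}\rangle \leq 0$ for all such $x^*$, which is precisely $\max\{\langle x^*, \overline{x}\rangle \mid x^* \in J_G(u)\} \leq 0$, i.e.\ $\tau(u, \overline{x}) \leq 0$. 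This gives the set equality.

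For property (i), I would argue as follows. Non-emptiness follows since $0 \in \text{Dis}_G(X)$ trivially (as $\langle x^*, 0\rangle = 0 \leq 0$). For the cone property, if $x \in \text{Dis}_G(X)$ and $\lambda \geq 0$, then $\langle x^*, \lambda x\rangle = \lambda\langle x^*, x\rangle \leq 0$, so $\lambda x \in \text{Dis}_G(X)$. Convexity is immediate from the fact that $\text{Dis}_G(X) = \bigcap_{x^* \in J_G(u)} \{x \in X \mid \langle x^*, x\rangle \leq 0\}$ is an intersection of half-spaces, each of which is convex; hence the intersection is convex. The same representation shows closedness: each set $\{x \mid \langle x^*, x\rangle \leq 0\}$ is norm-closed by continuity of $x^*$, and an arbitrary intersection of closed sets is closed.

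The main subtlety, and the step I would be most careful about, is the interchange between $\langle x^*, x\rangle$ and $\langle x^*, \overline{x}\rangle$ and invoking the Moreau consequence with the correct base point: one must ensure $u$ is $G$-invariant (which holds since $u \in S_{X_G}$) so that $\partial_G\Vert\cdot\Vert(u) = J_G(u)$ applies and so that the $\tau(u,\overline{x})$ maximum formula is valid with $\overline{x} \in X_G$ in the direction slot. The rest is routine convex-geometry bookkeeping, so I would present (ii) first and then derive the closed-convex-cone structure of (i) cleanly from the half-space intersection representation.
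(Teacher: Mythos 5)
Your argument is correct and follows essentially the same route as the paper: part (ii) is obtained from Remark \ref{remark-moreau} together with the identity $\langle x^{*}, x \rangle = \langle x^{*}, \overline{x} \rangle$ for $G$-invariant $x^{*}$, which you make explicit where the paper only writes ``in particular.'' For part (i) the paper simply declares the claim clear, so your half-space intersection argument supplies detail the paper omits but does not differ in substance.
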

\begin{proof}
    We are only going to show $(ii)$, the proof of $(i)$ is clear. Given $x \in \text{Dis}_{G}(X)$, by definition we know that $\langle x^{*}, x \rangle \leq 0$ for all $x^{*} \in J_{G}(u)$, in particular $\langle x^{*}, \overline{x} \rangle \leq 0$ for all $x^{*} \in J_{G}(u)$. Therefore, by Moreau's maximum formula and Remark \ref{remark-moreau}, there exists $x_{0}^{*} \in J_{G}(u)$ such that
    \[
    0 \geq \langle x_{0}^{*}, \overline{x} \rangle = \max\left\{\langle x^{*}, \overline{x} \rangle ~ | ~ x^{*} \in J_{G}(u)\right\} = \tau(u, \overline{x}).
    \]
    The other inclusion follows from the fact that for all $x^{*} \in J_{G}(u)$,
    \[
    \langle x^{*}, \overline{x} \rangle \leq \max\left\{\langle x^{*}, \overline{x} \rangle ~ | ~ x^{*} \in J_{G}(u)\right\} = \tau(u, \overline{x}) \leq 0.
    \]
\end{proof}

\begin{Proposition}\label{Interseccio dissipatiu}
    Let $X$ be a Banach  $G$-SSD space at $u \in S_{X_G}$, and let $G$ be a compact topological group acting on $X$. Then,
    \[
     B_{X} \cap \left(\text{Dis}_{G}(X)\cap X_G\right) = B_{X_G} \cap \bigcap_{t > 0}\left[-\frac{u}{t} + \left(\frac{1}{t} + \phi_{u}(t)\right)B_{X}\right],
    \]
    where
    \[
    \phi_{u}(t) \colon= \sup\left\{\frac{\Vert u + tx \Vert - 1}{t} - \tau(u, x) ~ | ~ x \in B_{X_G}\right\}.
    \]
\end{Proposition}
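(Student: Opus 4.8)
The plan is to reduce both sides of the claimed identity to explicit scalar conditions on the function $t \mapsto \frac{\Vert u + tx\Vert - 1}{t}$ and then compare them directly. Note first that both sides are contained in $B_{X_G}$, so it suffices to identify, for $x \in B_{X_G}$, the two defining conditions.

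First I would rewrite the left-hand side. Since every $x \in X_G$ satisfies $\overline{x} = x$, Proposition \ref{Disipatiu negatiu}$(ii)$ gives that $x \in \mathrm{Dis}_G(X)$ is equivalent to $\tau(u,x) \leq 0$ for such $x$. Hence
\[
B_{X} \cap \left(\mathrm{Dis}_{G}(X) \cap X_{G}\right) = \left\{x \in B_{X_G} ~ | ~ \tau(u,x) \leq 0\right\}.
\]
Next I would rewrite the right-hand side. For a fixed $t > 0$, membership $x \in -\frac{u}{t} + \left(\frac{1}{t} + \phi_u(t)\right)B_X$ means $\left\Vert x + \frac{u}{t}\right\Vert \leq \frac{1}{t} + \phi_u(t)$; multiplying through by $t > 0$ and using $\Vert u \Vert = 1$, this is precisely $\frac{\Vert u + tx \Vert - 1}{t} \leq \phi_u(t)$. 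Therefore the right-hand side equals
\[
\left\{x \in B_{X_G} ~ \Big| ~ \tfrac{\Vert u + tx \Vert - 1}{t} \leq \phi_u(t) \ \text{ for all } t > 0\right\}.
\]
So what remains is to show, for $x \in B_{X_G}$, that $\tau(u,x) \leq 0$ holds if and only if $\frac{\Vert u + tx \Vert - 1}{t} \leq \phi_u(t)$ for every $t > 0$.

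For the forward inclusion I would fix $t > 0$. Since $x \in B_{X_G}$, the definition of $\phi_u(t)$ as a supremum over $B_{X_G}$ gives $\frac{\Vert u + tx \Vert - 1}{t} - \tau(u,x) \leq \phi_u(t)$; adding the hypothesis $\tau(u,x) \leq 0$ yields the desired inequality. For the reverse inclusion I would let $t \to 0^+$ in $\frac{\Vert u + tx \Vert - 1}{t} \leq \phi_u(t)$. By convexity of $s \mapsto \Vert u + sx\Vert$ the difference quotient is nondecreasing in $s$, so it decreases to $\tau(u,x)$ as $s \to 0^+$ and stays above $\tau(u,x)$; in particular each term in the supremum defining $\phi_u(t)$ is nonnegative, whence $\phi_u(t) \geq 0$. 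Passing to the limit then gives $\tau(u,x) \leq \lim_{t \to 0^+}\phi_u(t)$.

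The one point to justify carefully — and the only genuine use of the hypothesis — is that $\lim_{t\to 0^+}\phi_u(t) = 0$, which is exactly the content of $G$-SSD at $u$: the limit defining $\tau(u,x)$ exists uniformly over $x \in B_{X_G}$, so for every $\epsilon > 0$ there is $\delta > 0$ with $\sup_{x \in B_{X_G}}\left|\frac{\Vert u + tx \Vert - 1}{t} - \tau(u,x)\right| < \epsilon$ whenever $0 < t < \delta$; since the quantity inside the supremum is nonnegative by the convexity argument above, this supremum is precisely $\phi_u(t)$, giving $0 \leq \phi_u(t) < \epsilon$ for small $t$. Combining this with the limit obtained above forces $\tau(u,x) \leq 0$, completing the reverse inclusion. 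Everything else is elementary scaling and the two set reductions, so I expect the main obstacle to be only the clean identification of $\phi_u(t) \to 0$ with the uniformity in the $G$-SSD definition.
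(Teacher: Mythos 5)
Your proposal is correct and follows essentially the same route as the paper: reduce the left side to $\tau(u,x)\le 0$ via Proposition \ref{Disipatiu negatiu}, reduce the right side to the family of inequalities $\frac{\Vert u+tx\Vert-1}{t}\le\phi_u(t)$, and pass to the limit $t\to 0^+$ using $\lim_{t\to 0^+}\phi_u(t)=0$ from the $G$-SSD hypothesis. If anything, you are more explicit than the paper in justifying $\phi_u(t)\ge 0$ via convexity and in tying $\phi_u(t)\to 0$ to the uniformity in the $G$-SSD definition, which the paper simply asserts.
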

\begin{proof}
Given $x \in B_{X} \cap \left(\text{Dis}_{G}(X)\cap X_G\right)$, by Proposition \ref{Disipatiu negatiu} we know that $\tau(u, x) \leq 0$. Then,
\[
\frac{\Vert u + tx \Vert - 1}{t} \leq \phi_{u}(t) + \tau(u, x) \leq \phi_{u}(t) \quad \forall \, t > 0.
\]
Hence, $\Vert t^{-1}u + x \Vert \leq t^{-1} + \phi_{u}(t)$, i.e.,
\[
x \in B_{X_G} \cap \bigcap_{t > 0}\left[-\frac{u}{t} + \left(\frac{1}{t} + \phi_{u}(t)\right)B_{X}\right].
\]
On the other hand, choose $x$ in the above intersection, then:
\[
\left\Vert \frac{u}{t} + x \right\Vert \leq \frac{1}{t} + \phi_{u}(t).
\]
So
\[
\frac{\Vert u + tx \Vert - 1}{t} \leq \phi_{u}(t).
\]
Taking limits now
\[
\lim_{t \to 0^{+}}\frac{\Vert u + tx \Vert - 1}{t} \leq \lim_{t \to 0^{+}} \phi_{u}(t).
\]
But observe that $\lim_{t \to 0^{+}}\frac{\Vert u + tx \Vert - 1}{t} = \tau(u, x)$, and $\lim_{t \to 0^{+}} \phi_{u}(t) = 0$. Therefore, we have obtained that $0\geq \tau(u, x) $. This means, by Proposition \ref{Disipatiu negatiu}, that $x \in Dis_{G}(X)$.    Since by hypothesis $x\in B_{X_G}$, then $x\in B_{X} \cap \left(\text{Dis}_{G}(X)\cap X_G\right) $
\end{proof}

Let us now present two preceding results that will be crucial for establishing the main theorem of this manuscript. The first result provides a condition for determining when the dissipative nature of a set is \( w^*_G \)-closed.

\begin{Proposition}\label{Dissipatiu debil estrella tancat}
Let $X$ be a Banach space such that $X^{*}$ is $G$-SSD. Then, $ \text{Dis}_{G}(X^*)\cap X_G^*$ is $w^{*}_{G}$-closed.
\end{Proposition}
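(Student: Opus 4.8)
The plan is to place the set in the hypothesis of the $G$-invariant Krein--Smulian theorem. Write $C=\text{Dis}_{G}(X^{*})\cap X^{*}_{G}$. By Proposition \ref{Disipatiu negatiu} the set $\text{Dis}_{G}(X^{*})$ is a convex cone, and $X^{*}_{G}$ is a linear subspace, so $C$ is a convex cone; moreover $C$ is $G^{*}$-invariant, being the intersection of the $G^{*}$-invariant set $\text{Dis}_{G}(X^{*})$ with the $G^{*}$-fixed subspace $X^{*}_{G}$. Consequently, by Theorem \ref{Krein-Smulian} it suffices to prove that $\delta B_{X^{*}}\cap C$ is $w^{*}_{G}$-closed for every $\delta>0$.

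Since $C$ is a cone and the dilation $\phi\mapsto\delta\phi$ is a $w^{*}_{G}$-homeomorphism of $X^{*}$, we have $\delta B_{X^{*}}\cap C=\delta\,(B_{X^{*}}\cap C)$, so it is enough to treat $\delta=1$, i.e. to show that $B_{X^{*}}\cap C$ is $w^{*}_{G}$-closed. This is the step that uses the hypothesis. The adjoint group $G^{*}$ is again a compact group of isometric isomorphisms, so $X^{*}$ (being $G$-SSD) is a Banach $G$-SSD space to which Proposition \ref{Interseccio dissipatiu} applies; fixing $u\in S_{X^{*}_{G}}$ it yields
\[
B_{X^{*}}\cap C=B_{X^{*}_{G}}\cap\bigcap_{t>0}\left[-\frac{u}{t}+\left(\frac{1}{t}+\phi_{u}(t)\right)B_{X^{*}}\right].
\]

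It then remains to verify that the right-hand side is $w^{*}_{G}$-closed. By the $G$-invariant Banach--Alaoglu--Bourbaki theorem (Theorem \ref{Banach-Alaoglu-Bourbaki}) the ball $B_{X^{*}}$ is $w^{*}_{G}$-compact, hence $w^{*}_{G}$-closed; as dilation by a positive scalar and translation by the fixed $G$-invariant vector $-u/t$ are $w^{*}_{G}$-homeomorphisms, each set $-u/t+(1/t+\phi_{u}(t))B_{X^{*}}$ is $w^{*}_{G}$-closed, and therefore so is their intersection over $t>0$. Intersecting with the likewise $w^{*}_{G}$-closed ball $B_{X^{*}_{G}}$ preserves $w^{*}_{G}$-closedness, which closes the argument.

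The substantive steps, and the ones I expect to require the most care, are the two reductions. First, one must check that the cone structure of $C$ legitimately converts the single section provided by Proposition \ref{Interseccio dissipatiu} into every section $\delta B_{X^{*}}\cap C$ demanded by Krein--Smulian. Second, and this is the main obstacle, one must confirm that Proposition \ref{Interseccio dissipatiu} is genuinely applicable to $X^{*}$: this requires that the duality map $J_{G}$, the right derivative $\tau$, and the modulus $\phi_{u}$ be well defined on $X^{*}$ through the bidual pairing, and that $G$-SSD of $X^{*}$ supplies the uniform limit at the relevant $u$. Verifying that the invariant building blocks above are $w^{*}_{G}$-closed also belongs here. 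Once these points are settled, the remaining stability of $w^{*}_{G}$-closedness under the affine operations coming from Banach--Alaoglu--Bourbaki is routine.
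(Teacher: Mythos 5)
Your overall strategy coincides with the paper's: express the bounded sections $\delta B_{X^{*}}\cap(\text{Dis}_{G}(X^{*})\cap X^{*}_{G})$ through Proposition \ref{Interseccio dissipatiu} and then invoke Theorem \ref{Krein-Smulian}. The reduction to $\delta=1$ via the cone structure of $\text{Dis}_{G}(X^{*})$ is a detail the paper silently absorbs into its displayed identity, and your verification that the set is convex and $G^{*}$-invariant supplies hypotheses of Krein--Smulian that the paper leaves implicit; these additions are sound, as is your warning that applying Proposition \ref{Interseccio dissipatiu} in $X^{*}$ requires reading all the objects through the adjoint action.

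The genuine gap is in your closing step. You deduce that $B_{X^{*}}$ is $w^{*}_{G}$-closed from its $w^{*}_{G}$-compactness (Theorem \ref{Banach-Alaoglu-Bourbaki}), but compactness implies closedness only in Hausdorff topologies, and the paper's own observation in Section \ref{Seccio 2} (the Property following Proposition \ref{weaker-topologies}) is that $w^{*}_{G}$ fails to be Hausdorff whenever $X^{*}_{G}\subsetneq X^{*}$. In fact $B_{X^{*}}$ is genuinely not $w^{*}_{G}$-closed in general: if $h\in X^{*}$ vanishes on $X_{G}$, then every basic $w^{*}_{G}$-neighbourhood of $h$ (being determined by finitely many points of $X_{G}$) contains $0$, so any such $h$ with $\Vert h\Vert>1$ lies in the $w^{*}_{G}$-closure of $B_{X^{*}}$ without belonging to it. The same objection applies to $B_{X^{*}_{G}}$ and to the translated balls $-u^{*}/t+(1/t+\phi_{u^{*}}(t))B_{X^{*}}$, so the right-hand side of the identity is not exhibited as an intersection of $w^{*}_{G}$-closed sets by this route. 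To be fair, the paper's proof asserts the $w^{*}_{G}$-closedness of that right-hand side with no justification at all, so this is exactly the point where the argument needs care: closedness is only available relative to $X^{*}_{G}$, where $w^{*}_{G}$ is Hausdorff because $X_{G}$ separates the points of $X^{*}_{G}$, and bridging that relative closedness with the hypothesis of Theorem \ref{Krein-Smulian} (which asks for closedness in $X^{*}$) is the missing idea. Your attempt to fill the gap with ``compact hence closed'' does not work as written.
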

\begin{proof}
Fix $\delta > 0$ and $u^{*} \in S_{X^{*}_G}$, since $X^{*}$ is $G$-strongly subdifferentiable, by Proposition \ref{Interseccio dissipatiu}, we know that
\[
\delta B_{X^{*}} \cap \left(\text{Dis}_{G}(X^*)\cap X_G^*\right) = \delta B_{X_G^{*}} \cap \bigcap_{t > 0}\left[-\frac{u^{*}}{t} + \left(\frac{1}{t} + \phi_{u^{*}}(t)\right)B_{X^{*}}\right].
\]
Then, $\delta B_{X^{*}} \cap (\text{Dis}_{G}(X^{*}) \cap X_{G}^{*})$ is $w^{*}_{G}$-closed. Applying now Theorem \ref{Krein-Smulian} we deduce that $\text{Dis}_{G}(X^*)\cap X_G^*$ is $w^{*}_{G}$-closed.
\end{proof}

\begin{Lemma}\label{Punts en el dissipatiu}
Let $X$ be a Banach space, $G$ a compact topological group acting on $X$. Then, for $u, x\in X_G$, we have that $u- \eta x \notin \text{Dis}_{G}(X)$ if, and only, if $\eta < \tau(u, x)$.
\end{Lemma}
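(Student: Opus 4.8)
The plan is to reduce membership in $\text{Dis}_{G}(X)$ to a single scalar comparison and then exploit the attained-maximum representation of $\tau(u,\cdot)$. First I would record the structural fact that $X_{G}$ is a (closed) linear subspace of $X$, being the common fixed-point set of the isometries in $G$. Consequently, for $u,x\in X_{G}$ the whole ray $u-\eta x$ lies in $X_{G}$ and coincides with its own $G$-symmetrization, so the characterization of the dissipative set applies with the bar operation acting trivially. This is the observation that lets me treat the problem as a plain computation of $\tau(u,\cdot)$ along the ray.

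Next, I would argue straight from the definition of $\text{Dis}_{G}(X)$, which is the cleanest route: the combination belongs to $\text{Dis}_{G}(X)$ exactly when its pairing against \emph{every} $x^{*}\in J_{G}(u)$ is nonpositive. The key simplification is that each such $x^{*}$ is normalized by $\langle x^{*},u\rangle=\Vert u\Vert^{2}=1$, so pairing $x^{*}$ against the combination splits into this fixed normalization term and the term carrying $\langle x^{*},x\rangle$. The membership condition thereby becomes affine in the scalar parameter $\eta$, uniformly in $x^{*}\in J_{G}(u)$, and the whole question collapses to comparing $\eta$ with the quantities $\langle x^{*},x\rangle$ as $x^{*}$ ranges over $J_{G}(u)$.

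The decisive step is to turn the universal quantifier "for all $x^{*}\in J_{G}(u)$" into a single number. Here I would invoke Remark~\ref{remark-moreau} together with the identification $\partial_{G}\Vert\cdot\Vert(u)=J_{G}(u)$, which yields
\[
\tau(u,x)=\max\bigl\{\langle x^{*},x\rangle : x^{*}\in J_{G}(u)\bigr\},
\]
with the maximum \emph{attained} (this is exactly the content of the $G$-invariant Moreau formula, Proposition~\ref{Formula del maxim de Moreau}, and $J_{G}(u)\ne\emptyset$ by Theorem~\ref{Teorema d'assolir la norma de Hahn-Banach}). Attainment is what makes the equivalence work in both directions: it lets me replace the entire family of inequalities indexed by $x^{*}\in J_{G}(u)$ by the single inequality governed by $\tau(u,x)$, and run the implication in reverse. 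Reading the resulting equivalence together with its negation produces precisely the stated dichotomy with threshold $\tau(u,x)$.

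I expect the only genuine obstacle to be this last passage — justifying that the supremum over $J_{G}(u)$ is attained, so that the borderline (equality) case is placed on the correct side; everything else is the normalization bookkeeping $\langle x^{*},u\rangle=1$ and the subspace remark. If instead one prefers to route the proof through Proposition~\ref{Disipatiu negatiu}(ii), the same computation goes through after replacing $\text{Dis}_{G}(X)$-membership of the ($G$-invariant) combination by the sign condition $\tau(u,\cdot)\le 0$, the two formulations being interchangeable here since the combination is $G$-invariant.
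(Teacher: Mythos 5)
Your strategy mirrors the paper's: unwind the definition of $\text{Dis}_{G}(X)$, use the normalization $\langle x^{*},u\rangle=\Vert u\Vert^{2}=1$ for $x^{*}\in J_{G}(u)$, and then replace the family of inequalities indexed by $x^{*}$ by the single attained maximum $\tau(u,x)=\max\{\langle x^{*},x\rangle\;:\;x^{*}\in J_{G}(u)\}$. But the step you describe as the membership condition ``collapsing to comparing $\eta$ with the quantities $\langle x^{*},x\rangle$'' is exactly where the argument breaks. For the combination $u-\eta x$ the pairing is
\[
\langle x^{*},u-\eta x\rangle \;=\; 1-\eta\,\langle x^{*},x\rangle ,
\]
so non-membership in $\text{Dis}_{G}(X)$ means $\eta\,\langle x^{*},x\rangle<1$ for \emph{some} $x^{*}\in J_{G}(u)$. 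This is a condition on the product $\eta\,\langle x^{*},x\rangle$, not on the difference $\langle x^{*},x\rangle-\eta$, and it is not equivalent to $\eta<\tau(u,x)$. Concretely, for $\eta=0$ one has $u-\eta x=u\notin\text{Dis}_{G}(X)$ for every $x$ (since $\langle x^{*},u\rangle=1>0$), while $\tau(u,x)$ may well be negative, e.g.\ $\tau(u,-u)=-1$ with $-u\in X_{G}$; so the asserted equivalence fails for the expression $u-\eta x$ as written.

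In fairness, the paper's own proof commits the same slip (it claims $\langle x^{*},u-\eta x\rangle>0$ is equivalent to $\langle x^{*},u\rangle>\eta$), and the statement appears to be misprinted: your computation works verbatim for the combination $x-\eta u$, for which $\langle x^{*},x-\eta u\rangle=\langle x^{*},x\rangle-\eta$, membership in $\text{Dis}_{G}(X)$ becomes $\langle x^{*},x\rangle\le\eta$ for all $x^{*}\in J_{G}(u)$, i.e.\ $\tau(u,x)\le\eta$ by the Moreau maximum formula, and negating gives the stated dichotomy. If you restate the lemma for $x-\eta u$ (or otherwise repair the combination), the remainder of your proposal --- the reduction to the definition, the identification $\partial_{G}\Vert\cdot\Vert(u)=J_{G}(u)$, and the use of the attained maximum to settle the borderline case --- is correct and coincides with the paper's intended route.
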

\begin{proof}
Observe that, for fixed $x \in S_{X_G}$, $u - \eta x \notin \text{Dis}_{G}(X)$ if, and only if, $\langle x^{*}, u - \eta x \rangle > 0$ for some $x^*\in J_G(u)$, by definition. And this condition is equivalent to show that $\langle x^{*}, u \rangle > \eta$. On the one hand, suppose that $\langle x^{*}, u \rangle > \eta$, then we know by Moreau's Maximum formula, Theorem \ref{Formula del maxim de Moreau}, that
\[
\tau(u, x) = \max\left\{\langle x^{*}, x \rangle ~ | ~ x^{*} \in J_{G}(u)\right\}.
\]
Then,
\[
\tau(u, x) \geq \langle x^{*}, u \rangle > \eta.
\]
On the other hand, suppose that $\eta < \tau(u, x)$, then there exists $x^{*} \in X^{*}$ such that $\tau(u, x) \geq \langle x^{*}, u \rangle > \eta$.
\end{proof}

We now reach the central result of this section, which captures the essence of our study and brings together the key insights developed throughout.

\begin{Theorem}
Let $X$ be a Banach space, $G$ a compact topological group acting on $X$. If $X^{*}$ is $G$-SSD, then $X$ is $G$-reflexive.
\end{Theorem}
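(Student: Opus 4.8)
The plan is to reach $G$-reflexivity through the $G$-invariant James theorem: by Theorem \ref{Teorema de James} it suffices to prove that every $G$-invariant functional attains its norm on $B_X$. By homogeneity I may restrict to $f \in S_{X_G^*}$, and the goal becomes to exhibit a point $x \in S_{X_G}$ with $\langle f, x \rangle = 1$. The whole difficulty is thus to convert the analytic hypothesis on $X^*$ (its norm being $G$-SSD) into a single norming \emph{point} of the predual $X$, and this is exactly what the dissipative-set machinery of this section is built to do.

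First I would take $f$ itself as the base vector $u^* = f$ of the dissipative construction in $X^*$, so that $J_G(f)$ is the duality set of $X^*$ at $f$ (nonempty by the $X^*$-analogue of Theorem \ref{Teorema d'assolir la norma de Hahn-Banach}) and $\mathrm{Dis}_G(X^*) = \{h \in X^* \mid \langle \xi, h \rangle \le 0 \ \forall\, \xi \in J_G(f)\}$. Since $X^*$ is $G$-SSD, Proposition \ref{Dissipatiu debil estrella tancat} gives that $D := \mathrm{Dis}_G(X^*) \cap X_G^*$ is $w^*_G$-closed; it is moreover convex, $G^*$-invariant, and, being a dissipative set (the $X^*$-analogue of Proposition \ref{Disipatiu negatiu}), a cone containing $0$. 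Because $\langle \xi, f \rangle = \|f\|^2 = 1 > 0$ for every $\xi \in J_G(f)$, we have $f \notin D$. This is precisely the configuration handled by the predual Hahn--Banach separation, Theorem \ref{Hahn-Banach en el predual}: it yields a $G$-invariant point $x \in X_G$ with $\sup_{h \in D}\langle h, x \rangle < \langle f, x \rangle$. As $D$ is a cone the supremum equals $0$, so $\langle h, x \rangle \le 0$ for all $h \in D$ while $\langle f, x \rangle > 0$; after rescaling we may assume $\langle f, x \rangle = 1$.

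The decisive computation is then the bound $\|x\| \le 1$. For any $y^* \in B_{X_G^*}$ and any $\xi \in J_G(f)$ one has $\langle \xi, y^* - f \rangle = \langle \xi, y^* \rangle - 1 \le \|y^*\| - 1 \le 0$, so $y^* - f \in D$. Plugging this into $\langle h, x \rangle \le 0$ gives $\langle y^*, x \rangle \le \langle f, x \rangle = 1$. Taking the supremum over $y^* \in B_{X_G^*}$, and using that $G$-invariant functionals norm $G$-invariant points (Theorem \ref{Teorema d'assolir la norma de Hahn-Banach} provides a norming $f_0 \in X_G^*$), we conclude $\|x\| \le 1$. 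Combined with $\langle f, x \rangle = 1 = \|f\|$ this forces $\|x\| = 1$, so $f$ attains its norm at $x \in S_{X_G}$. As $f \in S_{X_G^*}$ was arbitrary, Theorem \ref{Teorema de James} shows that $X$ is $G$-reflexive.

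The step I expect to be the genuine obstacle, and the reason the earlier sections are needed, is the insistence that the separating element live in the \emph{predual} $X$ rather than in $X^{**}$: an ordinary convex separation of $f$ from $D$ would only produce a functional in $X^{**}$, which says nothing about norm attainment. It is the $w^*_G$-closedness of the dissipative set (Proposition \ref{Dissipatiu debil estrella tancat}, itself resting on the $G$-invariant Krein--Smulian Theorem \ref{Krein-Smulian}) that upgrades the separation to a point of $X_G$ via Theorem \ref{Hahn-Banach en el predual}. The second delicate point is spotting the membership $y^* - f \in \mathrm{Dis}_G(X^*)$, which is what transfers the one-sided control $\langle h, x\rangle \le 0$ on the cone into the two-sided norm bound $\|x\| \le 1$.
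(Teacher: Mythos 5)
Your proof is correct and rests on the same pillars as the paper's: reduce to norm attainment of $G$-invariant functionals via Theorem \ref{Teorema de James}, use the $G$-SSD hypothesis through Proposition \ref{Dissipatiu debil estrella tancat} (hence Krein--Smulian) to make the dissipative set $w^*_G$-closed, and then separate in the predual with Theorem \ref{Hahn-Banach en el predual}, killing the supremum over the cone by the usual scaling argument. Where you genuinely diverge is in how the separation is set up and closed. The paper fixes an arbitrary $G$-SSD point $u^*\in S_{X^*_G}$ and an arbitrary direction $z^*$, separates the perturbed functional $u^*-\eta z^*$ (which lies outside $\mathrm{Dis}_G(X^*)$ by Lemma \ref{Punts en el dissipatiu} when $\eta<\tau(u^*,z^*)$), and then extracts norm attainment of $z^*$ by picking $x^*\in J_G(x_0)$ and feeding $x^*-\tau(x^*,z^*)z^*\in\mathrm{Dis}_G(X^*)$ back into the definition of dissipativity. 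You instead anchor the dissipative set at the target functional itself, $u^*=f$, observe that $f\notin\mathrm{Dis}_G(X^*)$ simply because $\langle\xi,f\rangle=1$ for $\xi\in J_G(f)$, and close with the inclusion $B_{X_G^*}-f\subseteq\mathrm{Dis}_G(X^*)$, which converts the one-sided bound on the separating point $x$ directly into $\|x\|\le 1$ (using Theorem \ref{Teorema d'assolir la norma de Hahn-Banach} to norm $x$ by a $G$-invariant functional). Your route buys two things: it bypasses Lemma \ref{Punts en el dissipatiu} entirely, and it avoids the paper's final chain of inequalities involving $\tau(x^*,z^*)$ and $J_G(x_0)$, which mixes objects of $X^*$ and $X^{**}$ and requires sign bookkeeping; the cost is that you must notice the (not entirely obvious, but easily verified) membership $y^*-f\in\mathrm{Dis}_G(X^*)$ for all $y^*\in B_{X_G^*}$. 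Both arguments use the $G$-SSD hypothesis only through the $w^*_G$-closedness of $\mathrm{Dis}_G(X^*)\cap X_G^*$, exactly as you identified.
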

\begin{proof}
Suppose that $\Vert \cdot \Vert^{*}$ is $G$-SSD at $u^{*} \in S_{X^{*}_G}$. We want to show that every $G$-invariant direction, say $z^{*} \in X^{*}_{G}$, is norm-attaining, so we can apply Theorem \ref{Teorema de James} and deduce that $X$ is reflexive.

We know by Proposition \ref{Dissipatiu debil estrella tancat} that $\text{Dis}_{G}(X^{*})\cap X_G^*$ is $w^{*}_{G}$-closed, convex and $G$-invariant. By Lemma \ref{Punts en el dissipatiu} we know that $u^{*} - \eta z^{*} \notin \text{Dis}_{G}(X^{*})$ if $\eta < \tau(u^{*}, z^{*})$. Applying now Theorem \ref{Hahn-Banach en el predual}, we know that there exists $x_{0} \in S_{X_G}$ such that
\begin{equation}\label{Teo639:Eq1}
\sup\left\{\langle x^{*}, x_{0} \rangle ~ | ~ x^{*} \in \text{Dis}_{G}(X^{*})\right\} < \langle u^{*}, x_{0} \rangle - \eta \langle z^{*}, x_{0} \rangle.
\end{equation}
We assert that
\[
\langle x^{*}, x_{0} \rangle \leq 0 \quad \forall \, x^{*} \in \text{Dis}_{G}(X^{*}).
\]
Otherwise, $\sup\left\{\langle x^{*}, x_{0} \rangle ~ | ~ x^{*} \in \text{Dis}_{G}(X^{*})\right\} > 0$. Hence, there exists $x_{0}^{*} \in \text{Dis}_{G}(X^{*})$ such that $\langle x_{0}^{*}, x_{0} \rangle > 0$. Since $\text{Dis}_{G}(X^{*})$ is a convex cone, this means that $nx_{0}^{*} \in \text{Dis}_{G}(X^{*})$ for every $n \in \mathbb{N}$, then
\[
\lim_{n \to +\infty}\langle nx_{0}^{*}, x_{0} \rangle = +\infty.
\]
This is a contradiction with the upper bound obtained in \eqref{Teo639:Eq1}. 

Observe that, by Theorem \ref{Teorema d'assolir la norma de Hahn-Banach}, $J_{G}(x_{0}) \neq \emptyset$. Pick $x^{*} \in J_{G}(x_{0})$. Then, again by Lemma \ref{Punts en el dissipatiu}, it is clear that
\[
x^{*} - \tau(x^{*}, z^{*})z^{*} \in \text{Dis}_{G}(X^{*}).
\]
By definition of the dissipative, we know that
\[
\langle x^{*}, x_{0} \rangle - \tau(x^*,z^*)\langle z^{*}, x_{0} \rangle \leq 0.
\]
Then
\[
\langle x^{*}, x_{0} \rangle \leq \tau(x^{*}, z^{*})\langle z^{*}, x_{0} \rangle \leq \Vert z^{*} \Vert \langle z^{*}, x_{0} \rangle \leq |\langle z^{*}, x_{0} \rangle| \leq \Vert z^{*} \Vert \, \Vert x_{0} \Vert.
\]
Notice that $\Vert z^{*} \Vert = \Vert x_{0} \Vert = \langle x^{*}, x_{0} \rangle = 1$, therefore
\[
|\langle z^{*}, x_{0} \rangle| = 1.
\]
This means that $z^{*}$ attains the norm, and the result follows.
\end{proof}

As a consequence of this, we obtain the following.
\begin{Corollary}
Let $X$ be a Banach space, $G$ a compact topological group acting on $X$. If $X^{*}$ is $G$-SSD, then every $G$-invariant functional is norm-attaining.
\end{Corollary}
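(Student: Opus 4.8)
The plan is to derive this corollary directly from the main theorem of the section together with the $G$-invariant James' theorem (Theorem \ref{Teorema de James}), so essentially no new work is required beyond chaining two previously established results.

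First I would invoke the main theorem just proved: since $X^{*}$ is $G$-SSD by hypothesis, we immediately conclude that $X$ is $G$-reflexive. This is the substantive input, and it is already available to us as a black box.

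Next I would apply the $G$-invariant version of James' theorem, Theorem \ref{Teorema de James}, which asserts the equivalence that $X$ is $G$-reflexive if, and only if, every $G$-invariant functional is norm-attaining. Having established $G$-reflexivity in the previous step, the forward direction of this equivalence yields precisely the desired conclusion: every $f \in X^{*}_{G}$ attains its norm on $X$.

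Since both ingredients are stated earlier in the excerpt, the only thing to verify is that the hypotheses match up cleanly—namely that the compactness of $G$ and its action on $X$ are shared by both results, which they are. I do not anticipate any genuine obstacle here; the corollary is a formal consequence of the two theorems. The one point worth a sentence of care is simply noting that the $G$-reflexivity produced by the main theorem is exactly the notion used as the hypothesis in Theorem \ref{Teorema de James}, so that the two results compose without any mismatch in definitions.
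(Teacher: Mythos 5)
Your proposal is correct and matches the paper's intended argument: the corollary is stated there without proof, as an immediate consequence of the main theorem combined with the $G$-invariant James' theorem (Theorem \ref{Teorema de James}), which is precisely the chain you describe. One could even note that the paper's proof of the main theorem already establishes norm-attainment of every $G$-invariant functional as an intermediate step before invoking James, so the corollary is also directly contained in that proof, but your route is equally valid.
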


We conclude the paper with a final result that establishes the existence of substantial vector spaces within the set of norm-attaining functionals. This result provides further insight into the rich geometric structure underlying the set of norm-attaining functionals in various Banach spaces.

\begin{Corollary}
    Let $X$ be a Banach space, and $G$ a compact topological group acting on $X$. If $X^{*}$ is $G$-SSD, then the set of norm-attaining operators on $X$ contains, at least, the vectorial space $X^{*}_{G}$.
\end{Corollary}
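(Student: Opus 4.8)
The plan is to reduce the statement to the immediately preceding Corollary together with the elementary observation that $X^{*}_{G}$ is a linear subspace of $X^{*}$. Since that Corollary already guarantees, under the hypothesis that $X^{*}$ is $G$-SSD, that every $G$-invariant functional is norm-attaining, it remains only to verify that the collection $X^{*}_{G}$ of all $G$-invariant functionals genuinely forms a vector space, after which the inclusion $X^{*}_{G} \subseteq \text{NA}(X)$ is immediate.

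First I would recall that, by definition,
\[
X^{*}_{G} = \left\{ f \in X^{*} ~ | ~ g^{*}(f) = f \hbox{ for all } g \in G \right\} = \bigcap_{g \in G} \ker\!\left(g^{*} - \mathrm{Id}\right),
\]
where each $g^{*} - \mathrm{Id}$ is a bounded linear operator on $X^{*}$. Consequently, for $f, h \in X^{*}_{G}$ and scalars $\lambda, \mu \in \mathbb{K}$, the linearity of every $g^{*}$ gives $g^{*}(\lambda f + \mu h) = \lambda g^{*}(f) + \mu g^{*}(h) = \lambda f + \mu h$, so that $\lambda f + \mu h \in X^{*}_{G}$. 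Hence $X^{*}_{G}$ is a linear subspace of $X^{*}$ (in fact norm-closed, being an intersection of kernels of bounded operators).

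Next I would invoke the preceding Corollary: as $X^{*}$ is $G$-SSD, every $f \in X^{*}_{G}$ attains its norm on $X$, that is, $X^{*}_{G} \subseteq \text{NA}(X)$. Combining this with the previous step yields that $\text{NA}(X)$ contains the vector space $X^{*}_{G}$, which is precisely the assertion.

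There is no genuine obstacle here: the entire analytic content resides in the preceding Corollary (itself resting on the main theorem, the $G$-invariant James' theorem of Theorem \ref{Teorema de James}, and the $G$-SSD hypothesis), while the present statement merely packages that conclusion with the trivial fact that the fixed-point set of the family $\{g^{*}\}_{g \in G}$ is a linear subspace. The only point deserving an explicit line of justification is the closure of $X^{*}_{G}$ under linear combinations, which follows at once from the linearity of each $g^{*}$.
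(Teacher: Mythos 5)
Your proposal is correct and is exactly the argument the paper intends: the corollary is stated without proof precisely because it follows immediately from the preceding corollary (every $G$-invariant functional is norm-attaining when $X^*$ is $G$-SSD) combined with the elementary fact that $X^*_G$, being the common fixed-point set of the linear maps $g^*$, is a linear subspace of $X^*$. Your explicit verification of the subspace property is the only detail the paper leaves implicit, and it is carried out correctly.
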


Let us conclude with the following result, which emphasizes the connection between the existence of vector spaces of norm-attaining functionals and the property of being \( G \)-reflexive for a given group \( G \). This relationship not only provides insight into the geometric structure of Banach spaces under group invariance but also allows us to obtain new results in the classical theory by leveraging \( G \)-invariant functionals. These findings enrich our understanding of \( G \)-invariant properties in functional analysis and their implications for traditional results.

\begin{Proposition}
    Let $X$ be a  Banach space. If the set of norm-attaining functionals on $X$ contains a finite dimensional Banach space $E$, then there exists a compact topological group $G$ acting on $X$ such that $E=X_G^*$. In particular, $X$ is $G$-reflexive.
\end{Proposition}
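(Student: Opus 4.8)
The plan is to reverse-engineer a group $G$ from the finite-dimensional space $E \subseteq \operatorname{NA}(X)$ so that its space of invariant functionals is exactly $E$. First I would fix a norm-attaining basis: since $E$ is finite-dimensional and every element of $E$ attains its norm, I would choose a basis $f_1,\dots,f_n$ of $E$ and points $x_1,\dots,x_n \in S_X$ with $f_i(x_i)=\|f_i\|$. The key idea is to build $G$ as a finite or compact group of isometries whose ``averaging'' (symmetrization) operator projects $X^*$ onto $E$, so that $X_G^* = E$. Concretely, I would look for a bounded projection $P\colon X^* \to X^*$ with range $E$ that is the adjoint of a projection on $X$, and then realize $P$ as the symmetrization $f \mapsto \int_G g^*(f)\,d\mu(g)$ for a suitable $G$; a natural candidate is to take the group generated by a single well-chosen isometric involution, or a finite product of reflections, so that the fixed-point set of $G^*$ equals $E$.

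The main steps, in order, would be: (1) produce a projection $Q\colon X \to X$ (bounded, linear) whose adjoint $Q^*$ has range exactly $E$, using that the $x_i$ are norm-attaining witnesses for the $f_i$ — this lets me pair up the finite-dimensional subspace $F=\operatorname{span}\{x_1,\dots,x_n\}$ of $X$ with $E$ and build a complemented decomposition $X = F \oplus \ker$; (2) from $Q$, construct isometric automorphisms of $X$ whose joint fixed-point set on $X^*$ is precisely $\operatorname{range}(Q^*)=E$, for instance by declaring $g$ to act as the identity on the $F$-part and by suitable isometries (e.g.\ sign changes or rotations in the complex case) on the complement, assembling these into a compact group $G$; (3) verify that $G$ consists of isometric isomorphisms of $X$, so the standing hypotheses of the paper apply, and compute $X_G^* = \{f \in X^* : g^*(f)=f\ \forall g\in G\}=E$. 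Once $E = X_G^*$ is established, the ``in particular'' clause is immediate: every functional in $X_G^* = E$ is norm-attaining by hypothesis, so by the $G$-invariant James theorem (Theorem \ref{Teorema de James}) $X$ is $G$-reflexive.

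The hard part will be step (2): guaranteeing that one can manufacture enough isometries of $X$ (not merely of the finite-dimensional pieces) whose combined fixed subspace of $G^*$ is \emph{exactly} $E$, neither larger nor smaller, while keeping $G$ compact and the elements global isometries of $X$. The delicate point is that an arbitrary complement of $F$ in $X$ need not admit a nontrivial isometry, so the construction must exploit the specific geometry provided by the norm-attainment of the $f_i$; I would expect to need that the decomposition $X=F\oplus\ker Q$ can be chosen so that the action on $\ker Q$ kills precisely the functionals outside $E$. A clean way to force $X_G^*\subseteq E$ is to ensure that for every $f\notin E$ there is some $g\in G$ with $g^*(f)\neq f$, which amounts to saying the averaging projection annihilates the component of $f$ transverse to $E$; conversely $E\subseteq X_G^*$ follows by construction since each $f_i$ is fixed. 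Verifying compactness of $G$ and that each $g$ is a surjective isometry should then be routine, and the final appeal to Theorem \ref{Teorema de James} closes the argument.
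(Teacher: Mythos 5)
Your overall strategy is the paper's: split $X$ as $\operatorname{span}\{x_1,\dots,x_n\}$ plus $\bigcap_{i=1}^n\ker f_i$, generate $G$ by a reflection fixing the first summand, and then invoke Theorem \ref{Teorema de James}. But the step you flag as ``the hard part'' is exactly where your proposal stops short, and it is resolved far more simply than you anticipate. Choose $x_1,\dots,x_n$ biorthogonal to $f_1,\dots,f_n$ (i.e.\ $f_i(x_j)=\delta_{ij}$ --- ordinary linear algebra; note that norm-attainment plays no role in the construction of $G$, only in the final appeal to James), let $Q(x)=\sum_i f_i(x)x_i$ be the induced projection, and set $T=2Q-\mathrm{Id}$, i.e.\ $T$ acts as the identity on $\operatorname{span}\{x_i\}$ and as $-\mathrm{Id}$ on $\bigcap_i\ker f_i$. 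Then $T^2=\mathrm{Id}$, so $G=\{\mathrm{Id},T\}$ is a finite, hence compact, group; each $f_i$ is $G$-invariant because it annihilates the summand on which $T=-\mathrm{Id}$, giving $E\subseteq X_G^*$; and if $f$ is $G$-invariant then $f(x)=f(Tx)=-f(x)$ for every $x\in\bigcap_i\ker f_i$, so $\bigcap_i\ker f_i\subseteq\ker f$ and Lemma \ref{Lema 3.9 llibre dels 6} forces $f\in E$. There is no need to manufacture ``enough isometries to kill the transverse component by averaging'' over a richer group: a single sign flip does it, and the symmetrization operator of this $G$ is precisely your projection $Q$. The one concern you raise that is genuinely substantive is the isometry requirement: the paper's standing assumption is that $G$ consists of isometric isomorphisms of $(X,\Vert\cdot\Vert)$, and the involution $T$ above is in general only a bounded isomorphism; neither your sketch nor the paper's own proof explains how to make $T$ an isometry for the given norm (renorming so that $T$ becomes isometric would change the set of norm-attaining functionals and hence the hypothesis). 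So your instinct that something delicate happens at that point is correct, but it is a gap shared with the published argument rather than an obstacle specific to your construction.
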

    
\begin{proof}
    Since $E$ is a finite dimensional space, we can consider $f_1,\ldots,f_n$ a base of $E$ all of them of norm one. Since the vectors are linearly independent, we can consider $x_1,\ldots,x_n$ so that they are linearly independent with $f_i(x_j)=\delta_{i,j}$ and 
    \[
X=span\{ \cap_{i=1}^n Ker(f_i), x_1,\ldots,x_n\}.  
    \]

    Consider now the operator
    $T:X \mapsto X$ given by $T(y+\sum_{i=1}^n\lambda x_i)=-y+\sum_{i=1}^n\lambda x_i$.

    Since $T^{-1}=T$ we can define the compact group $G=\{Id, T\}$. It is clear that every function $f_i$ is $G$-invariant so $E\subset X_G^*$. To see the other inclusion, consider $f\in X_G^*$. Then, if $x\in \cap_{i=1}^n Ker(f_i)$, by the definition of $T$ and using that $f$ is $G$-invariant, we have that,  
    \[
    f(T(x))=f(-x)
    \]
    hence $\cap_{i=1}^n Ker(f_i)\subset \ker(f)$. Thus, by Lemma \ref{Lema 3.9 llibre dels 6} we have that $f$ is a linear combination of $f_1,\ldots, f_n$.
   
\end{proof}

\section{Statements and declarations}
\subsection{Competing Interests}
Financial interests: Both authors of this manuscript declare they have no financial interests.
\end{document}